\documentclass[oneside]{amsart}
\usepackage{amssymb}
\usepackage[a4paper]{geometry}
\usepackage{dsfont}
\usepackage{mathrsfs}
\usepackage{enumitem}
\usepackage[hidelinks]{hyperref}

\theoremstyle{plain}
\newtheorem{thm}{Theorem}[section] 
\newtheorem{prop}[thm]{Proposition} 
\newtheorem{lem}[thm]{Lemma} 
\newtheorem{cor}[thm]{Corollary}

\theoremstyle{remark} 
\newtheorem*{rem}{Remark}

\theoremstyle{definition}

\numberwithin{equation}{section}

\begin{document}

\title[Trace-free Beurling--Ahlfors and Bourgain--Brezis]{The trace-free Beurling--Ahlfors transform and the Bourgain--Brezis problem for Hodge systems}

\author{Diogo Ars\'enio}
\address{New York University Abu Dhabi \\
Abu Dhabi \\
United Arab Emirates} 
\email{\href{mailto:diogo.arsenio@nyu.edu}{diogo.arsenio@nyu.edu}}

\subjclass[2020]{42B35, 42B15, 46E35, 35F35, 58A10}
\keywords{
	Bourgain--Brezis estimates,
	Hodge systems,
	trace-free Beurling--Ahlfors transform,
	endpoint Sobolev embeddings,
	Triebel--Lizorkin spaces,
	Besov spaces}

\date{August 5, 2026}

\begin{abstract}	
	We show that the dual approach to Bourgain--Brezis estimates for Hodge systems is substantially more flexible than previously understood. For $1\leq l\leq n-1$, we introduce the trace-free Beurling--Ahlfors transform $S=\frac{n-l}{n}P-\frac lnP^\perp$, a canonical normalization of the generalized Beurling--Ahlfors transform on $l$-forms in $\mathbb{R}^n$. Its matrix symbol decomposes into scalar multipliers that are odd under suitable orthogonal reflections, yielding an endpoint cancellation estimate from finite measures to $L^\infty$ for $|D|^{-n}S$. This cancellation allows us to complete the Hilbertian case of the Bourgain--Brezis conjecture in every dimension and for every form degree. We then develop multilinear reflection estimates and obtain new critical Triebel--Lizorkin and Besov Bourgain--Brezis estimates. In particular, for every dimension and form degree, the Sobolev Bourgain--Brezis conjecture in $\dot W^{\frac np,p}$ holds for $p=\frac{2k}{2k-1}$, $k\geq1$, and hence for exponents arbitrarily close to $1$. We also derive endpoint Hodge decompositions and Hodge--Sobolev inequalities. Finally, except in the endpoint Besov case where the critical space already embeds into $L^\infty$, we prove that the associated bounded selections cannot be linear.
\end{abstract}

\maketitle

\tableofcontents


\section{Introduction and main results}\label{section:introduction}

The Bourgain--Brezis problem asks whether certain critical underdetermined equations admit bounded solutions despite the failure of the corresponding endpoint Sobolev embedding. Its simplest form concerns the divergence equation
\begin{equation}\label{eq:divergence:1}
	\operatorname{div} u=f
	\qquad \text{in } \mathbb R^n,
\end{equation}
where $n\geq2$. If $f\in L^n(\mathbb R^n)$, the canonical solution
\begin{equation*}
	u=-|D|^{-2}\nabla f
\end{equation*}
belongs to $\dot W^{1,n}(\mathbb R^n)$, but it need not belong to $L^\infty(\mathbb R^n)$. This reflects the failure of the critical Sobolev embedding
\begin{equation*}
	\dot W^{1,n}(\mathbb R^n)\not\subset L^\infty(\mathbb R^n).
\end{equation*}
Precise definitions of the homogeneous function spaces and operators used in this work are collected in Section~\ref{sec:preliminaries}.

Bourgain and Brezis nevertheless proved \cite{bourgain:brezis:2002,bourgain:brezis:2003} that one can choose another solution of \eqref{eq:divergence:1} satisfying
\begin{equation*}
	u\in \dot W^{1,n}(\mathbb R^n)\cap L^\infty(\mathbb R^n),
	\qquad
	\|u\|_{\dot W^{1,n}\cap L^\infty}
	\lesssim
	\|f\|_{L^n}.
\end{equation*}
The point is that the divergence equation is underdetermined. The freedom to add divergence-free vector fields can be used to compensate for the singularities of the canonical inverse. The corresponding choice of $u$, however, cannot be realized by a bounded linear selection operator.

Bourgain and Brezis subsequently extended this question to Hodge systems \cite{bourgain:brezis:2004,bourgain:brezis:2007}. Let $1\leq l\leq n-1$, and let $\Lambda^l=\Lambda^l\mathbb R^n$ denote the space of $l$-forms on $\mathbb R^n$. For a function space $X$, we write $X\Lambda^l$ for the corresponding space of $l$-forms with coefficients in $X$. If $v\in \dot W^{1,n}\Lambda^l$, then $dv\in L^n\Lambda^{l+1}$, and the Hodge-system version of the Bourgain--Brezis theorem asserts that
\begin{equation*}
	d\big[\dot W^{1,n}\Lambda^l\big]
	=
	d\big[(\dot W^{1,n}\cap L^\infty)\Lambda^l\big],
\end{equation*}
with equivalent induced norms. Here
\begin{equation*}
	\|dv\|_{d[X\Lambda^l]}
	=
	\inf_{du=dv}\|u\|_{X\Lambda^l}.
\end{equation*}
Equivalently, every exact datum $dv$ with a primitive $v$ in $\dot W^{1,n}\Lambda^l$ has another primitive $u$ belonging simultaneously to $\dot W^{1,n}\Lambda^l$ and $L^\infty\Lambda^l$.

The general Bourgain--Brezis conjecture asks whether the same phenomenon holds throughout the critical Sobolev scale. Specifically, in \cite{bourgain:brezis:2007}, they conjectured that
\begin{equation}\label{conjecture:1}
	d\big[\dot W^{\frac np,p}\Lambda^l\big]
	=
	d\big[(\dot W^{\frac np,p}\cap L^\infty)\Lambda^l\big],
	\qquad
	1<p<\infty,\quad 1\leq l\leq n-1,
\end{equation}
again with equivalent induced norms. In concrete terms, this asks whether for every $v\in \dot W^{\frac np,p}\Lambda^l$ one can find $u\in(\dot W^{\frac np,p}\cap L^\infty)\Lambda^l$ such that $du=dv$ and
\begin{equation*}
	\|u\|_{(\dot W^{\frac np,p}\cap L^\infty)\Lambda^l}
	\lesssim
	\|v\|_{\dot W^{\frac np,p}\Lambda^l}.
\end{equation*}
The case $p=n$ is the original Bourgain--Brezis theorem. Beyond $p=n$, progress has been limited to special exponents, form degrees, or regions of the parameter space.

Several important partial results are known. Bourgain and Brezis observed that, in dimension $2$, the Hilbertian case admits a simpler proof based on duality. Maz'ya extended this dual approach in higher dimensions, proving the case $(p,l)=(2,n-1)$ in \cite{maz'ya:2007} and the case $(p,l)=(2,1)$ in \cite{maz'ya:2010}. These results showed that duality is a powerful tool in the problem. In all known applications, however, the dual method remained confined to the Hilbertian setting $p=2$.

A different line of progress was developed by Bousquet, Mironescu, and Russ \cite{bousquet:mironescu:russ:2013}, and later by Bousquet, Russ, Wang, and Yung \cite{bousquet:russ:wang:yung:2019}. Their work refines the original Bourgain--Brezis approximation method and proves estimates in the Triebel--Lizorkin scale. The following formulation summarizes the relevant part of the known theory.

\begin{thm}[\cite{bousquet:russ:wang:yung:2019,maz'ya:2010}]\label{thm:1}
	Let $n\geq2$, $1\leq l\leq n-1$, and $1<p,q<\infty$. Assume
	that
	\begin{equation*}
		n<p+l
		\qquad\text{or}\qquad
		(p,q,l)=(2,2,1).
	\end{equation*}
	Then
	\begin{equation*}
		d\big[\dot F^{\frac np}_{p,q}\Lambda^l\big]
		=
		d\big[(\dot F^{\frac np}_{p,q}\cap L^\infty)\Lambda^l\big],
	\end{equation*}
	with equivalent induced norms.
\end{thm}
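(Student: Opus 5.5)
This statement collects results from the literature: the case $n<p+l$ is due to Bousquet--Russ--Wang--Yung, and the case $(p,q,l)=(2,2,1)$ is due to Maz'ya. The plan is to recover each case by its own argument, both reduced to a common duality framework.

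\emph{Reduction to a dual estimate.} One inclusion is trivial: $(\dot F^{\frac np}_{p,q}\cap L^\infty)\Lambda^l\subset \dot F^{\frac np}_{p,q}\Lambda^l$. For the other, it suffices to treat a coexact primitive $v=d^*|D|^{-2}dv$ of $dv$. The goal is to produce $u$ with $du=dv$ and
\begin{equation*}
	\|u\|_{\dot F^{\frac np}_{p,q}\cap L^\infty}\lesssim\|v\|_{\dot F^{\frac np}_{p,q}}.
\end{equation*}
By Hahn--Banach, a closed-range argument, and the identification of $(L^1+\dot F^{-\frac np}_{p',q'})$-type preduals, this is equivalent to an estimate on the quotient by exact forms. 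Concretely, one needs
\begin{equation*}
	\|\varphi\|_{\dot F^{-\frac np}_{p',q'}}\lesssim \|\varphi\|_{L^1}+\|d^*\varphi\|_{\dot F^{-\frac np-1}_{p',q'}}
\end{equation*}
for $l$-forms $\varphi$ that are in $L^1$ and, after projecting, coclosed up to the part controlled by $d^*\varphi$. In other words, the $L^1$ norm must control the critical negative norm on divergence-free (coclosed) forms.

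\emph{Case $(2,2,1)$: Maz'ya's Hilbertian argument.} For $p=q=2$ the negative norm is $\| |D|^{-n/2}\varphi\|_{L^2}$, so the estimate reduces to bounding $\langle \varphi,|D|^{-n}\varphi\rangle$. This quantity equals a double integral of $\varphi(x)\cdot\varphi(y)$ against the kernel of $|D|^{-n}$, which is logarithmic. The closedness constraint $d^*\varphi=0$ allows the logarithm to be replaced by a bounded kernel, namely the symbol $\xi_i\xi_j/|\xi|^{n+2}$ applied componentwise. That kernel is homogeneous of degree $0$ and bounded, which yields $\lesssim\|\varphi\|_{L^1}^2$. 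For $l=1$ the bound follows from a computation with the vector-valued kernel, which Maz'ya carries out explicitly.

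\emph{Case $n<p+l$: approximation scheme.} Here I would follow the Bourgain--Brezis/Bousquet--Mironescu--Russ construction. Decompose $v=\sum_j\Delta_jv$ in Littlewood--Paley pieces. For each scale, use a nonlinear truncation to replace the low-frequency part by a bounded primitive, adding a correction whose differential matches. Then iterate on the $\dot F^{\frac np}_{p,q}$ error, which is smaller by a fixed factor.

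The condition $n<p+l$ enters through the correction step. The required estimate is a trace/Sobolev-type inequality on $(n-l)$-dimensional slices: restricted to coordinate subspaces of dimension $n-l$, the function lies in $\dot F^{\frac np}_{p,q}$, and $\frac np>\frac{n-l}{p}$ ensures that these slice restrictions are bounded. This is the main obstacle. It requires Triebel--Lizorkin trace theorems and a careful Littlewood--Paley bookkeeping of the $q$-sum. I would treat it by adapting the BRWY construction directly rather than reproving it from scratch.
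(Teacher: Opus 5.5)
The paper does not prove this theorem. It is quoted from \cite{bousquet:russ:wang:yung:2019,maz'ya:2010}, and only the Hilbertian case is re-derived, for every $l$, in Theorem~\ref{thm:hilbertian_estimate} and Corollary~\ref{cor:hilbertian_solution}. Your proposal has a genuine gap in the duality reduction, and it carries over into your treatment of $(p,q,l)=(2,2,1)$. The estimate you display first is the right one: it is assertion \ref{estimate:equiv3} of Proposition~\ref{prop:duality}, since $\|P\varphi\|_{\dot F^{-\frac np}_{p',q'}}\sim\|d^*\varphi\|_{\dot F^{-\frac np-1}_{p',q'}}$. Your paraphrase, however, is strictly weaker and is not what duality requires. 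That paraphrase says the $L^1$ norm must control the critical negative norm of coclosed forms. Applying Hahn--Banach to $\|\eta\|_X\lesssim\|\eta\|_{L^1}$ for coclosed $\eta\in L^1$ only produces some $u\in L^\infty\Lambda^l$ with $du=dv$, with no control of $\|u\|_{\dot F^{\frac np}_{p,q}}$. Assertion \ref{estimate:equiv4} needs coclosed $\eta\in X+L^1_0$, and you cannot reduce to integrable coclosed forms because $P^\perp$ does not preserve $L^1$. For instance, when $l=n-1$ and $(p,q)=(n,2)$, your weak version is, via $\star$, just the Gagliardo--Nirenberg--Sobolev inequality $\|u\|_{L^{n/(n-1)}}\lesssim\|\nabla u\|_{L^1}$. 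The selection statement in that case is the Bourgain--Brezis divergence theorem.

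Your Hilbertian computation imposes $d^*\varphi=0$ in order to change the kernel, so as written it proves only the weak version. Also, the bounded kernel is not $\xi_i\xi_j|\xi|^{-n-2}$ itself, since its diagonal entries still have logarithmic kernels. The repair is to keep the exact part. On $1$-forms, $P(\xi)$ has entries $\xi_i\xi_j|\xi|^{-2}$. The trace-free multiplier with entries $|\xi|^{-n}(\delta_{ij}-n\xi_i\xi_j|\xi|^{-2})$ has kernel a constant multiple of $x_ix_j/|x|^2$. For every $\omega\in L^1_0\Lambda^1$,
\[
	\big\langle\omega,|D|^{-n}(\operatorname{Id}-nP)\omega\big\rangle
	=\|P^\perp\omega\|_{\dot H^{-\frac n2}\Lambda^1}^2-(n-1)\|P\omega\|_{\dot H^{-\frac n2}\Lambda^1}^2 ,
\]
and the left-hand side is $\lesssim\|\omega\|_{L^1}^2$, which yields \ref{estimate:equiv3}. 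Since $\operatorname{Id}-nP=-nS$ when $l=1$, this is $-n$ times \eqref{eq:hilbertian_beurling_identity}. The paper's point is that the same trace-free subtraction $S=P-\frac ln\operatorname{Id}$ works in every degree.

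For $n<p+l$ you, like the paper, ultimately defer to \cite{bousquet:russ:wang:yung:2019}. That is acceptable for a quoted result, but the mechanism you describe cannot be the right one. The inequality $\frac np>\frac{n-l}{p}$ holds for every $p$ as soon as $l\geq1$, so it never uses the hypothesis. If that were where $n<p+l$ entered, your scheme would give the estimate for all $1<p<\infty$ and all $l$, far beyond what is known (see the discussion following Theorem~\ref{thm:1}). The hypothesis is equivalent to $\frac{n-l}{p}<1$, so the correction step must rely on an estimate that fails for $p\leq n-l$. Your sketch does not identify that estimate, so this case remains a citation rather than an argument.
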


The condition $n<p+l$ is well adapted to large values of $p$. In particular, it contains the original Sobolev case $p=n$, and it covers the full range $1<p<\infty$ when $l=n-1$. However, if $l<n-1$, then, apart from the exceptional Hilbertian case $(p,q,l)=(2,2,1)$, Theorem~\ref{thm:1} leaves out the interval $1<p\leq 2$. Thus, away from the highest form degree, the Bourgain--Brezis conjecture remained largely inaccessible near the
endpoint $p=1$.

\medskip

Bourgain--Brezis estimates have also found applications to phase lifting and Ginzburg--Landau problems, wave and fluid systems, and rigidity estimates in elasticity. See \cite{bourgain:brezis:2003,bourgain:brezis:2007,chanillo:yung:2012, chanillo:vanschaftingen:yung:2016,conti:garroni:2021}, and \cite{vanschaftingen:2014} for a broader account.

\medskip

The purpose of this paper is to show that the dual method is substantially more flexible than previously understood. The novelty is not the use of duality itself. Rather, we identify within the dual formulation a trace-free Beurling--Ahlfors structure that is valid in every dimension and for every form degree. This intrinsic structure completes the Hilbertian theory, and its underlying cancellation properties admit multilinear extensions that reach new non-Hilbertian families of exponents, including values of $p>1$ arbitrarily close to $1$.

In order to describe the trace-free Beurling--Ahlfors structure and its consequences in more detail, let us consider
\begin{equation*}
	P=\frac{dd^*}{|D|^2}
	\qquad\text{and}\qquad
	P^\perp=\frac{d^*d}{|D|^2},
\end{equation*}
the Hodge projections onto exact and coexact $l$-forms.
The operator
\begin{equation*}
	\widetilde S=P-P^\perp
\end{equation*}
is the generalized Beurling--Ahlfors transform on differential forms \cite{iwaniec:martin:1991, iwaniec:1992, iwaniec:martin:1993,iwaniec:martin:1996}.

We write $P(\xi)$ and $P^\perp(\xi)$ for the corresponding matrix-valued Fourier symbols acting on $\Lambda^l$. For every $\xi\neq0$, as detailed later in \eqref{eq:hodge_ranks}, one has
\begin{equation*}
	\operatorname{rank}P(\xi)=\binom{n-1}{l-1},
	\qquad
	\operatorname{rank}P^\perp(\xi)=\binom{n-1}{l}.
\end{equation*}
Since $P(\xi)$ and $P^\perp(\xi)$ are idempotent, their traces equal their ranks. Therefore,
\begin{equation*}
	\operatorname{tr}\widetilde S(\xi)
	=
	\binom{n-1}{l-1}-\binom{n-1}{l}
	=
	\frac{2l-n}{n}\binom nl
	=
	\frac{2l-n}{n}\dim\Lambda^l.
\end{equation*}

Next, we introduce the normalized trace-free Beurling--Ahlfors transform
\begin{equation}\label{eq:intro:S}
	S
	=
	\frac12\left(
	\widetilde S-\frac{2l-n}{n}\operatorname{Id}
	\right)
	=
	\frac{n-l}{n}P-\frac lnP^\perp.
\end{equation}
Thus,
\begin{equation*}
	\operatorname{tr}S(\xi)=0
	\qquad
	\text{for every $\xi\neq0$}.
\end{equation*}

In coordinates, we will show that every coefficient of $S\omega$ is a finite linear combination of scalar second-order Riesz transforms acting on the coefficients of $\omega$, with symbols of the forms
\begin{equation*}
	\frac{\xi_i^2-\xi_j^2}{|\xi|^2}
	\qquad\text{and}\qquad
	\frac{\xi_i\xi_j}{|\xi|^2},
\end{equation*}
with $i\neq j$. Each such symbol is odd under a suitable orthogonal reflection. The use of oddness symmetries in a dual Bourgain--Brezis argument already appears in the two-dimensional divergence setting in \cite[Section~4]{bourgain:brezis:2003}. The new point here is that the operator $S$ organizes the components of the Hodge decomposition into multipliers with this symmetry, in every dimension and for every form degree. Moreover, this organization persists in the multilinear estimates developed below.

The transform $S$ provides an intrinsic framework for the dual Bourgain--Brezis problem. Its fundamental linear endpoint estimate is the following.

We denote by $\mathscr M(\mathbb R^n)$ the space of finite signed Radon measures, equipped with the total variation norm, by $C(\mathbb R^n)$ the space of bounded continuous functions, equipped with the uniform norm, by $C_0(\mathbb R^n)$ its subspace of functions vanishing at infinity, and by $L^1_0(\mathbb R^n)$ the mean-zero subspace of $L^1(\mathbb R^n)$.

\begin{thm}[Endpoint cancellation]\label{thm:intro:beurling}
	Let $n\geq2$ and $1\leq l\leq n-1$. For every finite
	$\Lambda^l$-valued Radon measure $\mu$ on $\mathbb R^n$,
	\begin{equation*}
	\big\||D|^{-n}S\mu\big\|_{L^\infty\Lambda^l}
	\lesssim
	\|\mu\|_{\mathscr M \Lambda^l}.
	\end{equation*}
	Moreover,
	\begin{equation*}
	|D|^{-n}S\big[L^1\Lambda^l\big]\subset C\Lambda^l,
	\qquad
	|D|^{-n}S\big[L^1_0\Lambda^l\big]\subset C_0\Lambda^l.
	\end{equation*}
\end{thm}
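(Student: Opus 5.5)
The plan is to reduce the statement to scalar multipliers and then prove that $|D|^{-n}m(D)$ has a bounded, uniformly continuous kernel whenever $m$ is one of the symbols $(\xi_i^2-\xi_j^2)/|\xi|^2$ or $\xi_i\xi_j/|\xi|^2$ with $i\neq j$.

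\textbf{Step 1: reduction to scalar multipliers.} I would first compute the symbol of $S$ in coordinates. For $\omega=\sum_I\omega_I\,dx^I$ the symbol of $dd^*$ is $\xi\wedge(\xi\lrcorner\,\cdot\,)$, with matrix entries $\sum_{i\in I}\xi_i^2$ on the diagonal and $\pm\xi_i\xi_j$ off the diagonal, the latter when $I\triangle J=\{i,j\}$. Using $P+P^\perp=\operatorname{Id}$, one gets $S=P-\frac ln\operatorname{Id}$. Its diagonal entries are
\begin{equation*}
	\frac{\sum_{i\in I}\xi_i^2}{|\xi|^2}-\frac ln
	=\frac1{n|\xi|^2}\sum_{i\in I,\,j\notin I}(\xi_i^2-\xi_j^2).
\end{equation*}
This identity is the trace-free normalization at work. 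So every entry of $S(\xi)$ is a finite combination of symbols of the form $(\xi_i^2-\xi_j^2)/|\xi|^2$ or $\xi_i\xi_j/|\xi|^2$ with $i\neq j$. It therefore suffices to show that, for such an $m$, the operator $T=|D|^{-n}m(D)$ has kernel $K=K_0+c$, where $K_0$ is a bounded uniformly continuous function and $c$ is a constant. Granting this:
\begin{itemize}
	\item we get $\|T\mu\|_{L^\infty}\le\|K\|_\infty\|\mu\|_{\mathscr M}$, and $T f\in C$ for $f\in L^1$;
	\item if $K_0\in C_0$, then for $f\in L^1_0$ the constant drops out and $K_0*f\in C_0$ by density of $C_c$ in $L^1$.
\end{itemize}

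\textbf{Step 2: the kernel.} The symbol $m(\xi)|\xi|^{-n}$ is homogeneous of degree $-n$ and smooth off the origin. A degree $-n$ homogeneous distribution has kernel $\Omega(x/|x|)+c\log|x|$ plus a possible $\delta$ term, modulo the usual renormalization at $0$. The $\log$ coefficient is proportional to the spherical average $\int_{S^{n-1}}m\,d\sigma$. Here the oddness under reflection enters:
\begin{itemize}
	\item $\xi_i\xi_j/|\xi|^2$ is odd under $\xi_i\mapsto-\xi_i$;
	\item $(\xi_i^2-\xi_j^2)/|\xi|^2$ is odd under the swap $\xi_i\leftrightarrow\xi_j$.
\end{itemize}
In both cases the spherical mean vanishes, so there is no logarithm. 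Then $K(x)=\Omega(x/|x|)+c$ with $\Omega$ smooth on the sphere, and $\Omega$ inherits the same oddness under the corresponding reflection.

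Concretely, I would compute it as follows. Write $m(D)|D|^{-n}=\partial_i\partial_j|D|^{-n-2}$, or $(\partial_j^2-\partial_i^2)|D|^{-n-2}$. Up to constants and a polynomial normalization, $|D|^{-n-2}$ has kernel $|x|^2\log|x|$. Differentiating twice gives
\begin{equation*}
	\frac{x_ix_j}{|x|^2}\qquad\text{or}\qquad\frac{x_i^2-x_j^2}{|x|^2},
\end{equation*}
respectively. The $\delta_{ij}\log|x|$ terms are absent when $i\neq j$, and they cancel in the difference case. Both kernels are bounded, with modulus at most $1$.

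\textbf{Step 3: continuity and decay.} These kernels are bounded but discontinuous at $0$. This is the main obstacle, and the place where I would be careful. For $f\in L^1$, the convolution $\Omega*f$ is still continuous, because translation is continuous in $L^1$ and $\Omega$ is bounded:
\begin{equation*}
	|\Omega*f(x)-\Omega*f(y)|\le\|\Omega\|_\infty\|f(\cdot-x)-f(\cdot-y)\|_{L^1}.
\end{equation*}
Rewriting the convolution to put the translation on $f$ rather than on $\Omega$ is what makes this work. So $Tf\in C$, while for measures only the $L^\infty$ bound is claimed.

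For the $C_0$ statement with $f\in L^1_0$, I would approximate $f$ in $L^1$ by $g\in C_c$ with mean zero. For such $g$ and large $|x|$,
\begin{equation*}
	\Omega*g(x)=\int\bigl(\Omega(x-y)-\Omega(x)\bigr)g(y)\,dy\to0,
\end{equation*}
since $\Omega$ is homogeneous of degree $0$ and smooth away from $0$, so its oscillation over a fixed compact set tends to zero at infinity. The constant $c$ is annihilated because $f$ has mean zero, and uniform limits preserve $C_0$. Finally, I would verify that the distributional renormalization of $|D|^{-n}$ is consistent with the convention fixed in Section~\ref{sec:preliminaries}, which determines the additive constant.
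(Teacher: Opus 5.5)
Your proof is correct. The reduction step is the paper's (Proposition~\ref{prop:beurling_multiplier_decomposition} writes the same diagonal identity in operator form via $\sum_iA_i=l\operatorname{Id}$), but the analytic core is genuinely different.

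\textbf{The paper's route.} It never computes a kernel. In Lemma~\ref{lemma:bound:oddness} it localizes dyadically and notes that $\mathcal K=\mathscr F^{-1}(\varphi|\xi|^{-n}m)$ is odd under $\rho_\sigma$, hence vanishes on $\sigma^\perp$. This gives $|\mathcal K(x)|\lesssim|x|(1+|x|)^{-N}$, so $\sum_j|\mathcal K(2^jx)|\lesssim1$. The bounded representative is then the absolutely convergent series $\sum_j\Delta_jT_m\mu$. Continuity comes from translation in $L^1$, and the $C_0$ claim from $T_m[\mathscr S_0]\subset\mathscr S_0$ plus density.

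\textbf{Your route.} You identify the kernel explicitly through $\partial^2(|x|^2\log|x|)$: it is $c\,x_ix_j/|x|^2$ or $c\,(x_i^2-x_j^2)/|x|^2$, plus a constant. You then conclude with elementary convolution arguments.

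\textbf{What each buys.} Your route is more concrete. It shows that the operative cancellation is just the vanishing spherical mean of each matrix entry of $S(\xi)$, i.e.\ the absence of the $\log|x|$ kernel of $|D|^{-n}$; reflection-oddness is one sufficient condition for this. The price is homogeneous-distribution bookkeeping: you must check that $K*\mu$ represents $T\mu$ on $\mathscr S_0$. That check is routine, since the Fourier-side objects differ only at $\xi=0$, where $\widehat\phi$ vanishes to infinite order, and the additive constant is immaterial modulo polynomials.

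The paper's route gives, at no extra cost, the stronger bound $\big\|\sum_j|\Delta_j|D|^{-n}S\mu|\big\|_{L^\infty}\lesssim\|\mu\|_{\mathscr M\Lambda^l}$ of Corollary~\ref{scholium:bound_S}. That bound drives the Besov refinement in Theorem~\ref{thm:hilbertian_estimate_Besov}. The same argument also carries over to the multiscale multilinear kernels of Lemma~\ref{lemma:multilinear_reflection}, where no closed-form kernel is available. Your explicit kernel does not yield this dyadic $\ell^1$ summability without adding precisely the paper's observation that the dyadic kernel vanishes at the origin.

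\textbf{Minor slips.}
\begin{itemize}
\item The correct identity is $|D|^{-n}m(D)=-\partial_i\partial_j|D|^{-n-2}$; you dropped the sign.
\item Your first bullet, assuming $K_0\in C_0$, is vacuous, since $\Omega$ is neither continuous at $0$ nor decaying. Your Step~3 handles the actual situation correctly.
\end{itemize}
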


The estimate in Theorem~\ref{thm:intro:beurling} fails if $S$ is replaced by the identity. The normalization in \eqref{eq:intro:S} removes the scalar multiple of the identity from the matrix-valued symbol of $\widetilde S$. As previously mentioned, the remaining trace-free symbol decomposes into reflection-odd multipliers, and it is this cancellation that produces the boundedness from measures to $L^\infty$. Theorem~\ref{thm:intro:beurling} is the linear manifestation of a mechanism that will later be extended to multilinear expressions.

The first consequence is the full Hilbertian case of the Bourgain--Brezis conjecture.

\begin{thm}[Hilbertian Bourgain--Brezis estimates]\label{thm:intro:hilbertian}
	Let $n\geq2$ and $1\leq l\leq n-1$. Then
	\begin{equation*}
		d\big[\dot H^{\frac n2}\Lambda^l\big]
		=
		d\big[(\dot H^{\frac n2}\cap L^\infty)\Lambda^l\big],
	\end{equation*}
	with equivalent induced norms.
\end{thm}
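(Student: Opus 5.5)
The plan is to use duality: by the closed range theorem, the surjectivity statement reduces to an a priori inequality in $\dot H^{-\frac n2}$. That inequality will follow from an exact algebraic identity relating $S$ to $P$ and $P^\perp$, together with the endpoint cancellation of Theorem~\ref{thm:intro:beurling}.

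\textbf{Step 1: reformulation as a surjectivity statement.} On $\mathbb R^n$, $du=dv$ holds if and only if $u-v$ is closed, hence exact, that is, if and only if $P^\perp u=P^\perp v$. The inclusion $d[(\dot H^{\frac n2}\cap L^\infty)\Lambda^l]\subset d[\dot H^{\frac n2}\Lambda^l]$ is trivial. The content of the theorem is therefore that the operator
\begin{equation*}
T:(\dot H^{\frac n2}\cap L^\infty)\Lambda^l\to P^\perp\dot H^{\frac n2}\Lambda^l,\qquad Tu=P^\perp u,
\end{equation*}
is onto, with a bounded (possibly nonlinear) choice of preimage. I would apply the closed range theorem to $T$, or equivalently an open-mapping argument via Hahn--Banach. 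Surjectivity of $T$ is then equivalent to the lower bound $\|T^*\psi\|\gtrsim\|\psi\|_{\dot H^{-\frac n2}}$ for coexact $\psi\in P^\perp\dot H^{-\frac n2}\Lambda^l$. The norm of $T^*\psi$ in $((\dot H^{\frac n2}\cap L^\infty)\Lambda^l)^*$ is comparable to $\inf\{\|a\|_{\dot H^{-\frac n2}}+\|\mu\|\}$ over decompositions $\psi=a+\mu$. By a standard density and weak-$*$ argument I would reduce to $\psi$ smooth with Fourier support away from the origin, $a\in\dot H^{-\frac n2}\Lambda^l$, and $\mu\in L^1\Lambda^l$ (or a finite measure). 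So the goal becomes the estimate
\begin{equation*}
\|\psi\|_{\dot H^{-\frac n2}}\lesssim\|a\|_{\dot H^{-\frac n2}}+\|\mu\|_{\mathscr M\Lambda^l}
\qquad\text{whenever } \psi=a+\mu,\ P\psi=0 .
\end{equation*}

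\textbf{Step 2: the key identity.} Since $P+P^\perp=\operatorname{Id}$, \eqref{eq:intro:S} gives $S=\frac{n-l}{n}P-\frac lnP^\perp$. Pairing $\mu$ with $|D|^{-n}S\mu$ and using that $P,P^\perp$ are orthogonal projections commuting with $|D|$, I get
\begin{equation*}
\frac ln\|P^\perp\mu\|_{\dot H^{-\frac n2}}^2
=\frac{n-l}{n}\|P\mu\|_{\dot H^{-\frac n2}}^2-\big\langle \mu,|D|^{-n}S\mu\big\rangle .
\end{equation*}
Theorem~\ref{thm:intro:beurling} bounds the last term by $\|\mu\|_{\mathscr M}\,\||D|^{-n}S\mu\|_{L^\infty}\lesssim\|\mu\|_{\mathscr M}^2$. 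Since $P\psi=0$, we have $P\mu=-Pa$, so $\|P\mu\|_{\dot H^{-\frac n2}}\le\|a\|_{\dot H^{-\frac n2}}$. Hence
\begin{equation*}
\|P^\perp\mu\|_{\dot H^{-\frac n2}}\lesssim\|a\|_{\dot H^{-\frac n2}}+\|\mu\|_{\mathscr M}.
\end{equation*}
Finally, $\psi=P^\perp\psi=P^\perp a+P^\perp\mu$, which yields the desired bound. The right-hand side is also the natural bound for the induced norms, which gives the equivalence of norms.

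\textbf{Step 3: justification, which I expect to be the main obstacle.} The algebra above is immediate once the pairing $\langle\mu,|D|^{-n}S\mu\rangle$ and the norms $\|P\mu\|_{\dot H^{-\frac n2}}$, $\|P^\perp\mu\|_{\dot H^{-\frac n2}}$ make sense. For a general $L^1$ function or finite measure, $\widehat\mu(0)\neq0$ may make $|D|^{-n}\mu$ diverge at low frequencies. Note, however, that $P\mu=-Pa$ and $P^\perp\mu=\psi-P^\perp a$ lie in $\dot H^{-\frac n2}$ by assumption.

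I would handle this by regularizing. Replace $\mu$ by $\mu_\varepsilon=\chi_\varepsilon(D)\mu$, where $\chi_\varepsilon$ is a smooth cutoff to $\{\varepsilon<|\xi|<\varepsilon^{-1}\}$ whose kernel has uniformly bounded $L^1$ norm (a difference of dilated Schwartz bumps). Replace $a$ by $\chi_\varepsilon(D)a$ correspondingly, so that the constraint $P\psi=0$ is preserved. For these regularized objects the identity of Step 2 holds rigorously, using Theorem~\ref{thm:intro:beurling} for $\mu_\varepsilon$ with constants uniform in $\varepsilon$. I would then pass to the limit $\varepsilon\to0$ by monotone convergence of the $\dot H^{-\frac n2}$ norms.

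The remaining functional-analytic points are the following. First, the identification of $((\dot H^{\frac n2}\cap L^\infty)\Lambda^l)^*$ with sums $a+\mu$, where the $L^\infty$ dual is not $L^1$ but whose relevant part can be tested on the dense class. Second, the standard fact that a lower bound for $T^*$ gives surjectivity of $T$ with $\|u\|\lesssim\|P^\perp v\|_{\dot H^{\frac n2}}$. Both are routine once the estimate on the dense class is established.
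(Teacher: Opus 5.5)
Your proposal is correct and follows essentially the paper's route. It combines the identity $\frac{n-l}{n}\|P\omega\|_{\dot H^{-\frac n2}}^2-\frac ln\|P^\perp\omega\|_{\dot H^{-\frac n2}}^2=\int\langle\omega,|D|^{-n}S\omega\rangle\,dx$, the measure-to-$L^\infty$ bound for $|D|^{-n}S$, annular regularization, and a Hahn--Banach duality step as in Proposition~\ref{prop:duality}.

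The only difference is packaging. You apply the identity to the integrable part $\mu$ of a coexact $\psi=a+\mu$, which is assertion \ref{estimate:equiv4}, and you dualize $Y\cap L^\infty$; this forces you to restrict elements of $(L^\infty)^*$ to $C_0$ to obtain finite measures. The paper instead proves the $L^1_0$ version \ref{estimate:equiv3} and dualizes $X+L^1_0$, whose dual is $Y\cap L^\infty$, so it never has to handle $(L^\infty)^*$.
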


This completes the $p=2$ case of \eqref{conjecture:1} for all dimensions and all form degrees. Later in the paper, this is obtained as a consequence of the dual estimate
\begin{equation*}
	\|\omega\|_{\dot H^{-\frac n2}\Lambda^l}
	\lesssim
	\|\omega\|_{L^1_0\Lambda^l}
	+
	\|P\omega\|_{\dot H^{-\frac n2}\Lambda^l}.
\end{equation*}
The role of $S$ appears through the identity
\begin{equation*}
	\frac{n-l}{n}\|P\omega\|_{\dot H^{-\frac n2}\Lambda^l}^2
	-
	\frac l n \|P^\perp\omega\|_{\dot H^{-\frac n2}\Lambda^l}^2
	=
	\int_{\mathbb R^n}
	\big\langle \omega, |D|^{-n}S\omega\big\rangle dx,
\end{equation*}
which, through the endpoint boundedness of $|D|^{-n}S$, allows the prescribed control of the exact component $P\omega$ to be transferred to the coexact component $P^\perp \omega$.

The reflection cancellation underlying Theorem~\ref{thm:intro:beurling} can be multilinearized. In the Triebel--Lizorkin scale, this yields the principal new range for the Bourgain--Brezis conjecture. For $1<r<\infty$, we write $r'=\frac r{r-1}$ for the conjugate exponent.

\begin{thm}[Triebel--Lizorkin estimates]\label{thm:2}
	Let $n\geq2$, $1\leq l\leq n-1$, and $1<p\leq q\leq2$. Assume that
	\begin{equation*}
		q' \text{ is even}
		\qquad\text{and}\qquad
		\frac{p'}{q'}\in\mathbb N.
	\end{equation*}
	Then
	\begin{equation*}
		d\big[\dot F^{\frac np}_{p,q}\Lambda^l\big]
		=
		d\big[(\dot F^{\frac np}_{p,q}\cap L^\infty)\Lambda^l\big],
	\end{equation*}
	with equivalent induced norms.
\end{thm}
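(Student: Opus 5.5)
We argue by duality, as in the Hilbertian case. By a Hahn--Banach/closed range argument, the identity of the two exact spaces with equivalent norms is equivalent to a dual inequality. The dual of $\dot F^{\frac np}_{p,q}$ is $\dot F^{-\frac np}_{p',q'}$, and the sum $\dot F^{\frac np}_{p,q}\cap L^\infty$ dualizes to $\dot F^{-\frac np}_{p',q'}+L^1$ (more precisely, measures). Concretely, it suffices to show that for $\omega\in \mathcal S\Lambda^l$ with $\omega\in L^1_0$ or with compact Fourier support away from the origin,
\begin{equation*}
\|\omega\|_{\dot F^{-\frac np}_{p',q'}\Lambda^l}\lesssim \|\omega\|_{L^1\Lambda^l}+\|P\omega\|_{\dot F^{-\frac np}_{p',q'}\Lambda^l}.
\end{equation*}
Since $\|P\omega\|$ is controlled by hypothesis, the task reduces to bounding $\|P^\perp\omega\|_{\dot F^{-n/p}_{p',q'}}$ by the right-hand side.

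Write $q'=2m$ and $p'=kq'=2mk$. We use the Littlewood--Paley characterization
\begin{equation*}
\|P^\perp\omega\|_{\dot F^{-\frac np}_{p',q'}}^{p'}=\Big\|\Big(\sum_j 2^{-jnq'/p}|\Delta_jP^\perp\omega|^{q'}\Big)^{1/q'}\Big\|_{L^{p'}}^{p'}=\int\Big(\sum_j 2^{-2mjn/p}|\Delta_jP^\perp\omega|^{2m}\Big)^k dx .
\end{equation*}
Because $2m$ and $k$ are integers, this expands into a finite sum of multilinear integrals of degree $p'=2mk$, indexed by tuples of frequency scales. Each factor $|\Delta_j P^\perp\omega|^2$ is a quadratic expression. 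Using the pointwise algebraic identity $\frac{n-l}{n}P-\frac lnP^\perp=S$ together with $P+P^\perp=\mathrm{Id}$, we replace one pair of $P^\perp$ factors by $-\frac nl\big(\langle\cdot,S\cdot\rangle-\frac{n-l}{n}|P\cdot|^2\big)$, or, by polarization, by expressions of the form $\langle\Delta_j\omega,\Delta_jS\omega\rangle$. Terms containing $P\omega$ are handled by Hölder's inequality in the mixed norm, giving at most $\|P\omega\|\,\|P^\perp\omega\|^{p'-1}$, which can be absorbed by Young's inequality. Iterating this replacement on all factors leaves a main term. In that term, one factor is a raw copy of $\omega$ and the remaining $p'-1$ factors carry $S$ or Littlewood--Paley pieces of $\omega$. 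It thus has the form $\int\langle\omega,\, T(\omega,\dots,\omega)\rangle\,dx$, where $T$ is a multilinear multiplier of total order $-n$.

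The key step, and the main obstacle, is a \emph{multilinear reflection estimate}:
\begin{equation*}
\|T(\omega,\dots,\omega)\|_{L^\infty}\lesssim \|\omega\|_{L^1}\,\|\omega\|_{\dot F^{-n/p}_{p',q'}}^{p'-2},
\end{equation*}
or a similar bound, with the $L^1$ factor exhibited. Combined with $\|\omega\|_{L^1}$ from the pairing and Young's inequality, this closes the estimate.

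We would first try to generalize the proof of Theorem~\ref{thm:intro:beurling}. We decompose $S$ into scalar multipliers $\frac{\xi_i^2-\xi_j^2}{|\xi|^2}$ and $\frac{\xi_i\xi_j}{|\xi|^2}$. For each, we choose the reflection $R$ under which it is odd, and symmetrize the multilinear kernel over $R$. The oddness should make the total kernel integrable uniformly, i.e. bounded rather than logarithmically divergent. This is exactly the mechanism by which $|D|^{-n}S$ maps measures to $L^\infty$, where $|D|^{-n}$ alone has a $\log$ kernel.

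The delicate point is the sum over scales. Diagonal configurations, where all frequencies are comparable, reproduce the linear case. Off-diagonal configurations should be summable by the $\ell^{q'}$ structure with $q'\ge2$ even, which is why we need the evenness of $q'$ and the restriction $q\le 2$. If the direct kernel bound fails, the fallback is to prove the $L^\infty$ bound scale by scale. In that approach one factor is placed in $L^1$, the others in $L^\infty$ via Bernstein's inequality, and the sum is controlled by the decay gained from oddness.

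Finally, a density argument removes the a priori assumptions on $\omega$.
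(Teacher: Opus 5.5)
Your outer architecture is the paper's: reduce via Proposition~\ref{prop:duality} to assertion (C) with $X=\dot F^{s_r}_{r,t}$, $r=p'$, $t=q'$; expand $\int(\sum_j2^{js_rt}(\Delta_ju)^t)^N\,dx$ using $t$ even and $N=r/t\in\mathbb N$; substitute an $S$-identity in one factor; handle $P$-terms by H\"older and close with Young. But the whole proof rests on the multilinear reflection estimate, which you leave as ``the main obstacle''. The form and mechanisms you suggest for it would not work.

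\emph{The mixed-norm target.} Oddness helps only through a pointwise bound on the scale-summed multilinear kernel, integrated against $|f(x)|\prod_s|g(y_s)|$. That requires every input to lie in $L^1$. Your target $\|T(\omega,\dots,\omega)\|_{L^\infty}\lesssim\|\omega\|_{L^1}\|\omega\|_X^{p'-2}$ measures inputs in a negative-smoothness space, so it has no evident proof along these lines. It is also unnecessary, since every input is $\omega\in L^1_0$.

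\emph{The fallback.} Putting one factor in $L^1$ and the others in $L^\infty$ by Bernstein gives $2^{-jn(M-1)}\|f\|_{L^1}(2^{jn}\|g\|_{L^1})^{M-1}=\|f\|_{L^1}\|g\|_{L^1}^{M-1}$ at each scale. There is no decay in $j$, so the bound does not sum. Oddness yields nothing at a fixed scale; it helps only after summing $|\mathcal K(2^jz)|$ over $j$ at fixed $z$.

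\emph{Off-diagonal sums.} These are not controlled by ``the $\ell^{q'}$ structure''. Evenness of $q'$, together with $p'/q'\in\mathbb N$, is used only to make $\int A^N$ a sum of genuinely multilinear integrals.

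\emph{Iterating the substitution.} This is harmful. A product of two reflection-odd symbols is even under the simultaneous reflection, so exactly one factor may carry a component of $S$. All other factors must be $\Delta_j$ of raw coefficients $u=\omega_I$, whose radial symbols are reflection-invariant, not $\Delta_jP^\perp\omega$. Your quadratic ``pointwise'' identity also drops the cross term $\frac{n-2l}{n}\langle\Delta_jP\omega,\Delta_jP^\perp\omega\rangle$. That slip is harmless, but the linear identity $\operatorname{Id}=\frac nl(P-S)$ in a single factor is all that is needed.

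The missing ingredient is Lemma~\ref{lemma:multilinear_reflection}, which you would need to prove. Order $j_1\geq\cdots\geq j_N$ and put $S$ on one copy of the \emph{top-scale} block $\Delta_{j_1}u$. Since $s_rt=-nL/N$ with $L=r-1$, the weight $2^{s_rt(j_1+\cdots+j_N)}$ is exactly $L^1$-critical. The multilinear kernel then becomes $\prod_{k\geq2}2^{-\frac nN(j_1-j_k)}\widetilde{\mathcal K}(2^{j_{\kappa(1)}}z_1,\ldots,2^{j_{\kappa(L)}}z_L)$. Here $\widetilde{\mathcal K}$ has symbol $\prod_s\varphi(\eta_s)\varphi(\chi)m(-\chi)$, with $\chi=\sum_s2^{-(j_1-j_{\kappa(s)})}\eta_s$. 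Because these coefficients are at most $1$, which is exactly why $S$ must sit on the top scale, the symbols are uniformly bounded in $C^\infty_c$. So $\widetilde{\mathcal K}$ is uniformly Schwartz, and it is odd under $z_s\mapsto\rho_\sigma z_s$ for all $s$ simultaneously. Hence $|\widetilde{\mathcal K}(z)|\lesssim\sum_sh_Q(|z_s|)$ with $h_Q(t)=t(1+t)^{-Q}$. Summing over $j_1$ via $\sup_z\sum_jh_Q(2^j|z|)\lesssim1$, and over the gaps $j_1-j_k$ via the geometric factors, bounds the entire $S$-term by $\|\omega\|_{L^1}^{p'}$. Your Young's-inequality step then closes the argument.
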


The proof proceeds by duality and replaces the linear endpoint estimate underlying Theorem~\ref{thm:intro:beurling} by a multilinear one. The evenness of $q'$ permits the expansion of the relevant Littlewood--Paley powers, while the condition $p'/q'\in\mathbb N$ permits the expansion of the remaining outer power after the Littlewood--Paley sum has been formed. The resulting terms are controlled by the same reflection-odd multipliers as in the linear estimate.

Taking $q=2$ and using the identification $\dot W^{s,p}=\dot F^s_{p,2}$, with equivalent norms, Theorem~\ref{thm:2} gives Sobolev Bourgain--Brezis estimates for
\begin{equation*}
	p=\frac{2k}{2k-1},
	\qquad
	k=1,2,\ldots
\end{equation*}
for every $n\geq2$ and every $1\leq l\leq n-1$. Hence, for each dimension and form degree, the conjecture holds along a sequence of exponents converging to $1$.

A parallel multilinear argument yields the following result in the Besov scale.

\begin{thm}[Besov estimates]\label{thm:intro:besov}
	Let $n\geq2$, $1\leq l\leq n-1$, $1<p\leq 2$ and $1\leq q\leq p$.
	Assume that $p'$ is an even integer. Then
	\begin{equation*}
		d\big[\dot B^{\frac np}_{p,q}\Lambda^l\big]
		=
		d\big[(\dot B^{\frac np}_{p,q}\cap L^\infty)\Lambda^l\big],
	\end{equation*}
	with equivalent induced norms.
\end{thm}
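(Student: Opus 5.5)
The plan is to argue by duality, as for Theorem~\ref{thm:intro:hilbertian} and Theorem~\ref{thm:2}, and to replace the Littlewood--Paley square function by the Besov $\ell^{q'}L^{p'}$ structure. The inclusion $\supset$ is trivial, so it suffices to produce, for $v\in\dot B^{\frac np}_{p,q}\Lambda^l$, a primitive in $L^\infty$ with comparable norm. By a Hahn--Banach and closed-range argument, in the Bourgain--Brezis form, this reduces to the dual estimate
\begin{equation*}
	\|\omega\|_{\dot B^{-\frac np}_{p',q'}\Lambda^l}
	\lesssim
	\|\omega\|_{L^1_0\Lambda^l}
	+
	\|P\omega\|_{\dot B^{-\frac np}_{p',q'}\Lambda^l}
\end{equation*}
for $\omega\in L^1_0\Lambda^l$, say smooth with compactly supported Fourier transform away from the origin. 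Indeed, $d^*$ of the dual of $(\dot B^{\frac np}_{p,q}\cap L^\infty)$-primitives sees only $P\omega$. Since $q\le p\le2\le p'\le q'$, it will be enough to control the coexact part $P^\perp\omega$ in the stronger norm $\ell^{p'}L^{p'}$. That is, with $\Delta_j$ the Littlewood--Paley pieces, I will bound $\sum_j 2^{-njp'/p}\|\Delta_jP^\perp\omega\|_{L^{p'}}^{p'}$. I expect this reduction to be the reason for the hypothesis $q\le p$.

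Write $p'=2k$. Then $2^{-nj(2k-1)}\|\Delta_j\omega\|_{L^{2k}}^{2k}$ equals $\int(\Delta_j\omega)^{k}\overline{(\Delta_j\omega)}^{k}$, weighted by the frequency factor $2^{-nj(2k-1)}$. Expanding $\omega=P\omega+P^\perp\omega$ and using $S=\frac{n-l}nP-\frac lnP^\perp$, I mimic the Hilbertian identity. The multilinear form
\begin{equation*}
	\sum_j\int\big\langle (|D|^{-n/(2k-1)}\Delta_j)^{\otimes}\cdots,\ \Delta_j\omega\big\rangle
\end{equation*}
is chosen so that one factor is the raw $\omega\in L^1$ and the remaining $2k-1$ factors combine into a kernel of the form $|D|^{-n}S$ applied multilinearly. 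The key multilinear estimate to prove is
\begin{equation*}
	\Big|\sum_j 2^{-nj(2k-1)}\int \Delta_j S\omega_1\,\Delta_j\omega_2\cdots\Delta_j\omega_{2k}\,dx\Big|
	\lesssim \|\omega_1\|_{L^1}\prod_{i\ge2}\|\omega_i\|_{\dot B^{-\frac n{p}}_{p',p'}}^{\ldots}.
\end{equation*}
More precisely, one factor is in $\mathscr M$ and the others are in the Besov space, with the trace-free structure placed on a single factor. I would prove it by writing each coefficient of $S$ as a combination of reflection-odd Riesz multipliers $(\xi_i^2-\xi_j^2)/|\xi|^2$ and $\xi_i\xi_j/|\xi|^2$, as announced before Theorem~\ref{thm:intro:beurling}. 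The symmetrized product kernel is then odd under the corresponding reflection, which yields an integrable (Dini-type) kernel at each scale and summable across scales. This is the same mechanism as in Theorem~\ref{thm:intro:beurling}, now uniform in the other factors via Hölder in $L^{p'}$.

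The algebra then parallels the Hilbertian case. Terms containing at least one $P\omega$ are bounded by Hölder and Young's inequality by $\|P\omega\|\,\|\omega\|^{p'-1}$. The principal term yields $\frac{n-l}n A-\frac ln B$, where $B$ is the $P^\perp$ power sum, and it is controlled by $\|\omega\|_{L^1}\|\omega\|^{p'-1}$. Absorbing via $\epsilon$-Young gives the bound for $\|P^\perp\omega\|$. A density argument removes the smoothness assumption.

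The main obstacle is the multilinear reflection estimate. Splitting the $2k$-fold power into exact and coexact parts produces mixed terms that are not directly of the form $\langle\mu,|D|^{-n}S\cdot\rangle$. Signs and traces must be arranged by averaging over reflections so that only trace-free combinations multiply the $L^1$ factor. I would first try applying the decomposition at a single distinguished frequency factor and treating the others as bounded multipliers of $\Delta_j$-localized $L^\infty$ functions via Bernstein.
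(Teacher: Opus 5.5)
There is a genuine gap, and it sits in the estimate you call the key step. As you formulate it, the multilinear bound puts one factor in $\mathscr M$ and measures the remaining $p'-1$ factors in $\dot B^{-\frac np}_{p',p'}$, handled by H\"older. In that form it is false, already for a single scalar reflection-odd coefficient $m$ of $S$. Take $\omega_1=\delta$ (or an $L^1$ approximation), and a scalar $g$ with $\Delta_jg=c_j2^{jn}h(2^j\cdot)$ for $j\geq1$ and $\Delta_jg=0$ otherwise. This is possible, e.g., when $\varphi\equiv1$ on the Fourier support of $h$. Then $\|g\|_{\dot B^{-\frac np}_{p',p'}}\sim\|(c_j)\|_{\ell^{p'}}$. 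On the other hand, the $j$-th term of your sum is exactly $c_j^{p'-1}\int\mathscr F^{-1}(\varphi m)\,h^{p'-1}dx$. For a suitable translate $h$ of a fixed profile, this is a fixed nonzero constant times $c_j^{p'-1}$, so the sum diverges for $(c_j)\in\ell^{p'}\setminus\ell^{p'-1}$. Your fallback (cancellation on one distinguished factor, Bernstein on the others) fails for the same reason. Bernstein only gives bounds uniform in $j$, and once absolute values are placed on the other factors the cancellation is lost. With all factors equal to $\delta$, each scale contributes $\int|\mathscr F^{-1}(\varphi m)||\check\varphi|^{p'-1}dx>0$, whereas the signed term $\int\mathscr F^{-1}(\varphi m)\check\varphi^{p'-1}dx$ vanishes.

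The missing ingredient is Lemma~\ref{lemma:multilinear_reflection}, in which \emph{all} factors are taken in $L^1$. The whole $p'$-linear form at scale $j$ is written as $\int f(x)\int\widetilde{\mathcal K}\big(2^j(x-y_1),\ldots,2^j(x-y_{p'-1})\big)\prod_sg(y_s)\,dy\,dx$. The joint kernel is odd under the simultaneous reflection $z_s\mapsto\rho_\sigma z_s$ of all variables, hence bounded by $\sum_s|z_s|(1+|z_s|)^{-Q}$, which is summable over scales uniformly. Fubini then gives $\lesssim\|f\|_{L^1}\|g\|_{L^1}^{p'-1}$. This does not follow from Theorem~\ref{thm:intro:beurling} plus H\"older. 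Correspondingly, the paper never splits into exact and coexact parts. It applies $\operatorname{Id}=\frac nl(P-S)$ to a single factor, $\|\Delta_j\omega_I\|_{L^{p'}}^{p'}=\frac nl\int\big(\Delta_j(P\omega)_I-\Delta_j(S\omega)_I\big)(\Delta_j\omega_I)^{p'-1}dx$, so the other factors stay the raw components $\omega_I\in L^1$. After weighting and summing over $j$, the $S$-term is $\lesssim\|\omega\|_{L^1}^{p'}$, and the mixed-term obstacle you describe never arises.

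Your reduction to bounding $P^\perp\omega$ in $\ell^{p'}L^{p'}$ is also invalid when $q<p$. Take $\omega=(\phi_{2^{-N}}-\phi_{2^N})dx_I$, where $\phi_\epsilon=\epsilon^{-n}\phi(\cdot/\epsilon)$ and $\widehat\phi$ is compactly supported and equal to $1$ near the origin. Then $\|\omega\|_{L^1_0}\lesssim1$, $\Delta_j\omega=\Delta_j\delta\,dx_I$ for $|j|\leq N-C$, and $\Delta_j\omega=0$ for $|j|\geq N+C$. Hence $\|P\omega\|_{\dot B^{-\frac np}_{p',q'}}\sim N^{1/q'}$ while $\|P^\perp\omega\|_{\dot B^{-\frac np}_{p',p'}}\sim N^{1/p'}$. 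So the stronger estimate you aim for fails, and your scheme could at best give $q=p$.

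In the paper, $q\le p$ enters differently. One multiplies the single-scale inequality by $2^{-jn(p'-1)}$ and takes the $\ell^{q'/p'}$ norm. The reflection term is bounded by its full $\ell^1$ sum, using $\ell^1\subset\ell^{q'/p'}$; this is where $q'\geq p'$ is needed. The $P$-term is bounded by H\"older as $\|P\omega\|_{\dot B^{-\frac np}_{p',q'}}\|\omega\|_{\dot B^{-\frac np}_{p',q'}}^{p'-1}$, and Young's inequality then closes the estimate for $\omega$ itself. Finally, $q=1$ lies outside the duality principle, which requires $q>1$. It is handled directly by $\dot B^{\frac np}_{p,1}\subset L^\infty$ with $u=v$.
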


Theorems~\ref{thm:1} and~\ref{thm:2} cover complementary regions of the critical parameter range. The Bourgain--Brezis approximation method is strongest for large $p$, through the condition $n<p+l$, whereas the method developed here reaches exponents $p$ arbitrarily close to $1$. This complementarity raises a natural interpolation problem. Any such interpolation argument would need to account for the intrinsic nonlinearity of Bourgain--Brezis selections, to which we return in the final section. Related complex and real interpolation approaches have recently been
developed in \cite{curca:2025,curca:2026}.

\medskip

The paper is organized as follows. Section~\ref{sec:preliminaries} collects the notation for homogeneous Littlewood--Paley spaces, differential forms, and Hodge projections. Section~\ref{sec:duality} recalls the duality principle connecting Bourgain--Brezis selections with estimates for Hodge components of integrable forms. In Section~\ref{section:beurling}, we introduce the trace-free Beurling--Ahlfors transform and establish its endpoint reflection cancellation. Section~\ref{section:hilbertian} applies this estimate to the Hilbertian problem and its dyadic Besov refinement. The multilinear reflection estimates and the resulting Triebel--Lizorkin and Besov theorems are proved in Section~\ref{section:multilinear}. Section~\ref{section:hodge_consequences} develops consequences for endpoint Hodge decompositions and Hodge--Sobolev estimates. Section~\ref{section:nonlinearity} proves the intrinsic nonlinearity of the associated bounded selections. Appendix~\ref{appendix:one-dimensional-model} presents a one-dimensional model based on the Hilbert transform and Hardy projections, while Appendix~\ref{appendix:further_properties} records supplementary properties of the trace-free Beurling--Ahlfors transform.


\section{Preliminaries}\label{sec:preliminaries}

For clarity, we collect here the notation and some standard functional-analytic facts used throughout the paper.

Generally speaking, we employ the expression $A\lesssim B$ to denote an inequality $A\leq CB$ for a constant $C>0$ depending only on fixed parameters, and $A\sim B$ if both $A\lesssim B$ and $B\lesssim A$ hold.

\subsection{Homogeneous functional spaces}

Let $\mathscr S(\mathbb R^n)$ and $\mathscr S'(\mathbb R^n)$ denote the Schwartz space and the space of tempered distributions, respectively, and let $\mathscr P(\mathbb R^n)$ denote the space of polynomials.

All homogeneous function spaces in this paper are understood as subspaces of $\mathscr S'(\mathbb R^n)/\mathscr P(\mathbb R^n)$, the space of tempered distributions modulo polynomials.
We let $\mathscr S_0(\mathbb R^n)$ denote the subspace of all $\phi\in\mathscr S(\mathbb R^n)$ satisfying
\begin{equation*}
	\int_{\mathbb R^n}x^\alpha\phi(x)dx=0
\end{equation*}
for every multi-index $\alpha$. It can be shown that the quotient space $\mathscr{S}'/\mathscr{P}$ is isomorphic to the space $\mathscr{S}_0'(\mathbb{R}^n)$ of tempered distributions restricted to $\mathscr{S}_0$.
We follow the framework of \cite{triebel:1983} and also refer to \cite{grafakos:modern:2014} for further details on the subject.

Fix now a nonnegative radial function $\varphi\in C^\infty_c(\mathbb R^n\setminus\{0\})$ such that
\begin{equation*}
	\sum_{j\in\mathbb Z}\varphi(2^{-j}\xi)=1
	\qquad
	\text{for every $\xi\neq0$},
\end{equation*}
and set
\begin{equation*}
	\Delta_j=\varphi(2^{-j}D).
\end{equation*}
Besides the Lebesgue spaces $L^p(\mathbb{R}^n)$, with $1\leq p\leq\infty$, we will use Triebel--Lizorkin spaces $\dot F^s_{p,q}(\mathbb{R}^n)$, with $(s,p,q)\in\mathbb{R}\times[1,\infty)\times[1,\infty]$, and Besov spaces $\dot B^s_{p,q}(\mathbb{R}^n)$, with $(s,p,q)\in\mathbb{R}\times[1,\infty]\times[1,\infty]$. These are defined via a smooth homogeneous Littlewood--Paley decomposition
\begin{equation*}
	u=\sum_{j\in\mathbb{Z}}\Delta_{j}u
\end{equation*}
as the subspaces of $\mathscr{S}'/\mathscr{P}$ endowed with the complete norms
\begin{equation*}
	\left\|u\right\|_{\dot F^{s}_{p,q}\left(\mathbb{R}^n\right)}=
	\left\|2^{js}\Delta_j u\right\|_{L^p \ell^q}
	\qquad\text{and}\qquad
	\left\|u\right\|_{\dot B^{s}_{p,q}\left(\mathbb{R}^n\right)}=
	\left\|2^{js}\Delta_j u\right\|_{\ell^q L^p},
\end{equation*}
where the $\ell^q$-norm acts on $j\in\mathbb{Z}$.

We introduce
\begin{equation*}
	L^1_0(\mathbb R^n)
	=
	\bigg\{f\in L^1(\mathbb R^n):
	\int_{\mathbb R^n}f(x)dx=0\bigg\}
\end{equation*}
to denote the restriction of $L^1(\mathbb{R}^n)$ to mean-zero functions. While $\mathscr{S}_0$ is dense in $\dot F^{s}_{p,q}$ and $\dot B^{s}_{p,q}$, for any $s\in\mathbb{R}$ and $p,q\in [1,\infty)$, it is important to recall here that the closure of $\mathscr{S}_0$ in $L^1$ is $L^1_0$. Moreover,
\begin{equation*}
	(L^1_0)'=L^\infty/\mathbb R.
\end{equation*}
Since constants vanish in $\mathscr{S}'/\mathscr{P}$, we identify this quotient with $L^\infty$ whenever we work in the homogeneous setting.

When $s\in\mathbb{N}$ and $1\leq p\leq\infty$, we will also use the classical homogeneous Sobolev space $\dot W^{s,p}(\mathbb{R}^n)$ of tempered distributions $f\in\mathscr{S}'/\mathscr{P}$ such that $\partial^\alpha f\in L^p(\mathbb{R}^n)$, for all multi-indices $\alpha\in \mathbb{N}^n$ with $|\alpha|=s$. If $1<p<\infty$ and $s\in\mathbb{R}$, we will employ the same notation $\dot W^{s,p}(\mathbb{R}^n)$ to refer to the homogeneous Sobolev space defined by the condition $|D|^sf\in L^p(\mathbb{R}^n)$. If $p=2$, we will also use the classical notation $\dot W^{s,2}=\dot H^s$. Note that the preceding definitions of $\dot W^{s,p}(\mathbb{R}^n)$ are equivalent when $s\in\mathbb{N}$ and $1< p<\infty$.

For every $s\in\mathbb R$ and $1<p<\infty$, one has the standard identification
\begin{equation*}
	\dot W^{s,p} = \dot F^s_{p,2},
\end{equation*}
with equivalent norms.

We also recall here that, for $1<p,q<\infty$ and $s\in\mathbb{R}$, the standard duality identities are
\begin{equation*}
	\big(\dot F^s_{p,q}\big)'
	=
	\dot F^{-s}_{p',q'},
	\qquad
	\big(\dot B^s_{p,q}\big)'
	=
	\dot B^{-s}_{p',q'},
\end{equation*}
with equivalent norms. The duality pairings are the extensions of the usual integral pairing on $\mathscr{S}_0$.

Finally, if $X$ and $Y$ are compatible normed spaces, in the sense that they are both subspaces of a common Hausdorff topological vector space, we equip their intersection and sum with the norms
\begin{equation*}
	\|u\|_{X\cap Y} = \max\big\{\|u\|_X,\|u\|_Y\big\}
\end{equation*}
and
\begin{equation*}
	\|u\|_{X+Y} = \inf_{u=u_0+u_1} \big( \|u_0\|_X+\|u_1\|_Y \big),
\end{equation*}
respectively.

\subsection{Differential forms}

For $0\leq l\leq n$, we write $\Lambda^l=\Lambda^l\mathbb{R}^n$ to denote the space of exterior $l$-forms on the Euclidean space $\mathbb{R}^n$. We use the convention that $\Lambda^0=\mathbb{R}$ and $\Lambda^l=\{0\}$ whenever $l<0$ or $l>n$.
Each $l$-form has the unique representation
\begin{equation*}
	\omega=\sum_{|I|=l}\omega_I dx_I,
\end{equation*}
where the summation runs over all ordered $l$-tuples
\begin{equation*}
	I=(i_1<i_2<\ldots<i_l),
\end{equation*}
with $\{i_1,i_2,\ldots,i_l\}\subset\{1,2,\ldots,n\}$, and the elements
\begin{equation*}
	dx_I=dx_{i_1}\wedge dx_{i_2}\wedge\ldots\wedge dx_{i_l}
\end{equation*}
constitute an orthonormal basis of $\Lambda^l$.
We write
\begin{equation*}
	\langle\omega,\eta\rangle
	=
	\sum_{|I|=l}\omega_I\eta_I
\end{equation*}
for the Euclidean inner product on $\Lambda^l$.

If $X$ is a normed space of scalar functions or distributions, then $X(\mathbb R^n,\Lambda^l)$, abbreviated to $X\Lambda^l$, denotes the space of differential $l$-forms whose coefficients belong to $X$, equipped with the norm
\begin{equation*}
	\|\omega\|_{X\Lambda^l}=\bigg(\sum_{|I|=l}\|\omega_I\|_X^2\bigg)^\frac 12.
\end{equation*}
Since $\Lambda^l$ is finite dimensional, the precise choice of a norm on the coefficient space is immaterial up to equivalence. All boundedness, density, and duality statements used below extend coefficientwise from $X$ to $X\Lambda^l$. We refer to \cite{abraham:marsden:ratiu:1988,schwarz:1995} for background on differential forms.

The exterior derivative on $l$-forms is given by
\begin{equation*}
	d\omega
	=
	\sum_{j=1}^n\sum_{|I|=l}
	\partial_j\omega_I dx_j\wedge dx_I,
\end{equation*}
and its formal $L^2$-adjoint is the codifferential $d^*$. On $l$-forms, it can be written as
\begin{equation*}
	d^*=(-1)^{n(l-1)+1}\star d\star,
\end{equation*}
where $\star$ denotes the Hodge star operator. These operators satisfy
\begin{equation*}
	d^2=0,
	\qquad
	(d^*)^2=0,
	\qquad
	dd^*+d^*d=-\Delta,
\end{equation*}
where $\Delta=\sum_{j=1}^n\partial_j^2$ is the classical Laplacian and acts coefficientwise.

The exterior derivative and codifferential act boundedly as
\begin{equation*}
	d:\dot W^{s,p}\Lambda^l\to \dot W^{s-1,p}\Lambda^{l+1}
\end{equation*}
and
\begin{equation*}
	d^*:\dot W^{s,p}\Lambda^l\to \dot W^{s-1,p}\Lambda^{l-1}.
\end{equation*}
Similar boundedness properties hold in the scales of Triebel--Lizorkin and Besov spaces.

\subsection{Hodge projections}\label{subsec:hodge_projections}

The classical Riesz transforms are defined by
\begin{equation*}
	R_j=\frac{\partial_j}{|D|}=\mathscr{F}^{-1}\frac{i\xi_j}{|\xi|}\mathscr{F},
	\qquad
	j=1,\ldots,n.
\end{equation*}
Here, we use the convention
\begin{equation*}
	\mathscr{F}f(\xi)=\widehat f(\xi)=\int_{\mathbb{R}^n}e^{-i\xi\cdot x}f(x)dx
\end{equation*}
defining the Fourier transform.

On differential forms, we use the exterior and codifferential Riesz transforms
\begin{equation*}
	R=\frac d{|D|},
	\qquad
	R^*=\frac{d^*}{|D|}.
\end{equation*}
The Hodge projections are
\begin{equation*}
	P=RR^*=\frac{dd^*}{|D|^2},
	\qquad
	P^\perp=R^*R=\frac{d^*d}{|D|^2}.
\end{equation*}
They satisfy
\begin{equation*}
	P^2=P,
	\qquad
	(P^\perp)^2=P^\perp,
	\qquad
	PP^\perp=P^\perp P=0,
	\qquad
	P+P^\perp=\operatorname{Id}.
\end{equation*}
In particular, in $\mathscr S_0'\Lambda^l$, one has
\begin{equation*}
	\begin{aligned}
		P^\perp\omega&=0
		&&\Leftrightarrow&
		d\omega&=0,
		\\
		P\omega&=0
		&&\Leftrightarrow&
		d^*\omega&=0.
	\end{aligned}
\end{equation*}
Thus $P$ and $P^\perp$ are the $L^2$-orthogonal projections onto the exact and coexact components, respectively.

The geometry of the symbols will be used repeatedly. We identify $\xi\in\mathbb R^n$ with its Euclidean dual vector, and let $\varepsilon_\xi\omega=\xi\wedge\omega$ and $\iota_\xi\omega=\omega(\xi,\ldots)$ denote exterior and interior multiplication, respectively. With the Fourier transform convention above, one has
\begin{equation*}
	\widehat{d\omega}(\xi)
	=i\varepsilon_\xi\widehat\omega(\xi),
	\qquad
	\widehat{d^*\omega}(\xi)
	=-i\iota_\xi\widehat\omega(\xi).
\end{equation*}
Consequently, the symbols of the Hodge projections are
\begin{equation*}
	P(\xi)
	=
	\frac{\varepsilon_\xi\iota_\xi}{|\xi|^2},
	\qquad
	P^\perp(\xi)
	=
	\frac{\iota_\xi\varepsilon_\xi}{|\xi|^2},
	\qquad
	\xi\neq0.
\end{equation*}
The identity
\begin{equation*}
	\varepsilon_\xi\iota_\xi
	+
	\iota_\xi\varepsilon_\xi
	=
	|\xi|^2\operatorname{Id}
\end{equation*}
is the symbol-level form of the Hodge identity.

Let $\xi\neq 0$ and set $e=\xi/|\xi|$. Choosing an orthonormal basis of $\mathbb R^n$ whose first element is $e$, every basis element of $\Lambda^l$ either contains $e$ or belongs entirely to $e^\perp$. Hence one has the orthogonal decomposition
\begin{equation*}
	\Lambda^l
	=
	e\wedge\Lambda^{l-1}(e^\perp)
	\oplus
	\Lambda^l(e^\perp).
\end{equation*}

The symbol $P(\xi)$ is the projection onto the first summand, whereas $P^\perp(\xi)$ is the projection onto the second. Consequently,
\begin{equation}\label{eq:hodge_ranks}
	\operatorname{rank}P(\xi)
	=\binom{n-1}{l-1},
	\qquad
	\operatorname{rank}P^\perp(\xi)
	=\binom{n-1}{l}.
\end{equation}

Since the coefficients of $P$ and $P^\perp$ are finite linear combinations of second-order Riesz transforms, both projections are bounded on $\dot F^s_{p,q}\Lambda^l$ for $s\in\mathbb R$ and $1<p,q<\infty$, and on $\dot B^s_{p,q}\Lambda^l$ for $s\in\mathbb R$, $1<p<\infty$, and $1\leq q\leq\infty$. In either functional setting one therefore has
\begin{equation*}
	\omega=P\omega+P^\perp\omega
\end{equation*}
and
\begin{equation*}
	\|\omega\|_{X\Lambda^l}
	\sim
	\|P\omega\|_{X\Lambda^l}
	+
	\|P^\perp\omega\|_{X\Lambda^l},
\end{equation*}
where $X$ denotes any of the preceding spaces in the boundedness ranges of $P$ and $P^\perp$.
Moreover,
\begin{equation*}
	P\big[\dot F^s_{p,q}\Lambda^l\big]
	=
	d\big[\dot F^{s+1}_{p,q}\Lambda^{l-1}\big],
	\qquad
	P^\perp\big[\dot F^s_{p,q}\Lambda^l\big]
	=
	d^*\big[\dot F^{s+1}_{p,q}\Lambda^{l+1}\big],
\end{equation*}
and analogous identities hold in the Besov scale. Hence
\begin{equation*}
	\dot F^s_{p,q}\Lambda^l
	=
	d\big[\dot F^{s+1}_{p,q}\Lambda^{l-1}\big]
	\oplus
	d^*\big[\dot F^{s+1}_{p,q}\Lambda^{l+1}\big],
\end{equation*}
with a corresponding decomposition for $\dot B^s_{p,q}\Lambda^l$. These are the homogeneous Hodge decompositions in the Triebel--Lizorkin and Besov scales.


\section{The duality principle}\label{sec:duality}

A dual formulation of Bourgain--Brezis estimates was already present in the original work of Bourgain and Brezis \cite{bourgain:brezis:2003,bourgain:brezis:2007}. We record here the precise form that will be used throughout the paper. The point of the proposition below is to identify the Bourgain--Brezis selection estimate with an endpoint estimate for the coexact Hodge component of an integrable form.

Let $1<p,q<\infty$, and let $Y$ denote either
\begin{equation*}
	\dot F^{\frac np}_{p,q}(\mathbb R^n)
	\qquad\text{or}\qquad
	\dot B^{\frac np}_{p,q}(\mathbb R^n).
\end{equation*}
We write $X=Y'$, so that, respectively,
\begin{equation*}
	X=\dot F^{-\frac np}_{p',q'}(\mathbb R^n)
	\qquad\text{or}\qquad
	X=\dot B^{-\frac np}_{p',q'}(\mathbb R^n).
\end{equation*}
Since $1<p,q<\infty$, the space $Y$ is reflexive, and hence $X'=Y$.

With the standard norms on sums and intersections, one has
\begin{equation}\label{eq:dual-sum-intersection}
	\big(X+L^1_0\big)'=Y\cap L^\infty,
\end{equation}
where, as usual in the homogeneous setting, $L^\infty$ is understood modulo constants. All duality pairings below are taken coefficientwise on differential forms.

We first record why the mean-zero space $L^1_0$ occurs naturally.

\begin{lem}\label{lemma:critical-mean-zero}
	Let $X$ be either of the two negative smoothness spaces above. If $f\in X\cap L^1(\mathbb R^n)$, then
	\begin{equation*}
		\int_{\mathbb R^n}f(x)dx=0.
	\end{equation*}
\end{lem}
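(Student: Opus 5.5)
The plan is to read off $\int f$ as the value at the origin of $\widehat f$, which is continuous because $f\in L^1$. We then show that membership in $X$ forces this value to vanish. The key quantitative input is that $X$ has critical negative smoothness $-\frac np$ with integrability $p'$, which is exactly the scaling of $L^1$: the scaling exponent of $\dot B^{-\frac np}_{p',q'}$ is $-\frac np-\frac n{p'}=-n$, the same as $L^1$. So a nonzero mean cannot be compensated by high-frequency decay and must produce a divergence of the low-frequency Littlewood--Paley sum.

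The argument goes in four steps.

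\emph{Step 1: low-frequency blocks of $f$.} Let $c=\int f\,dx=\widehat f(0)$. For $j\to-\infty$ write
\begin{equation*}
	\Delta_j f=\check\varphi_j*f,
	\qquad
	\check\varphi_j(x)=2^{jn}\check\varphi(2^jx).
\end{equation*}
We claim that
\begin{equation*}
	2^{-jn}\Delta_j f(2^{-j}\cdot)\to c\,\check\varphi
	\qquad\text{in } L^{p'}(\mathbb R^n).
\end{equation*}
This is the standard approximation-of-identity fact: after rescaling, $f$ concentrates to $c\,\delta$. Convergence in $L^{p'}$ follows from the $L^1$ continuity of translations together with the facts that $\check\varphi\in\mathscr S$ and $\|\check\varphi(\cdot-y)-\check\varphi\|_{L^{p'}}\to0$ as $y\to0$, combined with dominated convergence in $y$.

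\emph{Step 2: scaling gives a uniform lower bound.} By scaling,
\begin{equation*}
	2^{-j\frac np}\|\Delta_j f\|_{L^{p'}}
	=2^{-j\frac np}2^{jn}2^{-j\frac n{p'}}\big\|2^{-jn}\Delta_j f(2^{-j}\cdot)\big\|_{L^{p'}},
\end{equation*}
and the powers of $2$ on the right cancel. Hence
\begin{equation*}
	2^{-j\frac np}\|\Delta_j f\|_{L^{p'}}\to |c|\,\|\check\varphi\|_{L^{p'}}
	\qquad\text{as } j\to-\infty.
\end{equation*}
The limit is strictly positive whenever $c\neq0$.

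\emph{Step 3: contradiction in the Besov case.} The $\ell^{q'}$ norm over $j\in\mathbb Z$ of a sequence that stays bounded below for infinitely many indices is infinite, since $q'<\infty$. Thus $f\notin\dot B^{-\frac np}_{p',q'}$ unless $c=0$.

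\emph{Step 4: the Triebel--Lizorkin case.} We can either use the embedding $\dot F^{-\frac np}_{p',q'}\subset \dot B^{-\frac np}_{p',\max(p',q')}$ and repeat Step 3, since $\max(p',q')<\infty$. Alternatively, we argue directly: the norm is at least $\|\big(\sum_{j\le J}|2^{-j\frac np}\Delta_jf|^{q'}\big)^{1/q'}\|_{L^{p'}}$, and Fatou together with the rescaled convergence of Step 1 shows this tends to infinity as more terms are included.

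The main point needing care is the meaning of $f\in X\cap L^1$ in $\mathscr S'/\mathscr P$. An element of $X$ is an equivalence class modulo polynomials, and we must make sure that the $L^1$ representative $f$ and its class in $X$ have the same $\Delta_j$. This holds because $\Delta_j$ annihilates polynomials, as $\varphi$ vanishes near the origin. So $\Delta_j f$ computed from the $L^1$ function is exactly the block entering the $X$ norm, and the lower bound of Step 2 applies. I expect this identification and the $L^{p'}$ convergence in Step 1 to be the only genuinely delicate points; the rest is bookkeeping with scaling.
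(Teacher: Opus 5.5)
Your proof is correct and follows essentially the same route as the paper: you rescale the low-frequency block $\Delta_j f$, show via dominated convergence and $L^{p'}$-continuity of translations of the kernel that it converges to $\big(\int f\big)\check\varphi$, and contradict the decay forced by membership in $X$ as $j\to-\infty$. The only difference is that the paper first embeds $X$ into the diagonal space $\dot B^{-n(1-\frac1r)}_{r,r}$ with $r=\max\{p',q'\}$ (a critical Sobolev embedding that may raise integrability). You instead stay at integrability $p'$ and use the elementary Minkowski inclusion $\dot F^{s}_{p',q'}\subset\dot B^{s}_{p',\max\{p',q'\}}$ for the Triebel--Lizorkin case, which is slightly more elementary and equally valid.
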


\begin{proof}
	Set $r=\max\{p',q'\}$. The standard critical Sobolev embeddings \cite{triebel:1983} give
	\begin{equation*}
		X\subset
		\dot F^{-n(1-\frac1r)}_{r,r}
		=
		\dot B^{-n(1-\frac1r)}_{r,r}.
	\end{equation*}
	Consequently, if $f\in X$, then
	\begin{equation}\label{eq:low-frequency-vanishing}
		\lim_{j\to-\infty}
		2^{-jn(1-\frac1r)}\|\Delta_jf\|_{L^r}=0.
	\end{equation}
	
	Next, write $\Delta_jf=\mathcal K_j*f$, where
	\begin{equation*}
		\mathcal K_j(x)=2^{jn}\mathcal K(2^jx)
	\end{equation*}
	for a nonzero kernel $\mathcal K\in\mathscr S_0$. A change of variables
	gives
	\begin{equation*}
		2^{-jn(1-\frac1r)}\|\Delta_jf\|_{L^r}
		=
		\left\|
		\int_{\mathbb R^n}
		\mathcal K(x-2^jy)f(y)dy
		\right\|_{L^r}.
	\end{equation*}
	Further observe that
	\begin{equation*}
		\left\|\int_{\mathbb R^n}
		\big(
		\mathcal K(x-2^jy)-\mathcal K(x)
		\big)f(y)dy
		\right\|_{L^r}
		\leq
		\int_{\mathbb R^n}
		\|\mathcal K(x-2^jy)-\mathcal K(x)\|_{L^r}
		|f(y)|dy.
	\end{equation*}
	Since translations are continuous in $L^r$, the integrand on the right-hand side converges pointwise to zero as $j\to-\infty$. Moreover, it is bounded by $2\|\mathcal K\|_{L^r}|f(y)|$, which yields, by dominated convergence, that
	\begin{equation*}
		\int_{\mathbb R^n}
		\mathcal K(x-2^jy)f(y)dy
		\rightarrow
		\mathcal K(x)
		\int_{\mathbb R^n}f(y)dy
	\end{equation*}
	in $L^r$. Therefore,
	\begin{equation*}
		\lim_{j\to-\infty}
		2^{-jn(1-\frac1r)}\|\Delta_jf\|_{L^r}
		=
		\|\mathcal K\|_{L^r}
		\left|\int_{\mathbb R^n}f(y)dy\right|.
	\end{equation*}
	The conclusion then follows from \eqref{eq:low-frequency-vanishing} and the fact that $\mathcal K$ is nonzero.
\end{proof}

The preceding lemma clarifies why the statement of the duality principle below is formulated in terms of integrable differential forms with mean-zero coefficients.

\begin{prop}[Duality principle]\label{prop:duality}
	Let $n\geq2$, $1\leq l\leq n-1$, and let $Y$ and $X=Y'$ be as above.
	The following assertions are equivalent.
	\begin{enumerate}[label=(\Alph*),font=\upshape]
		\item\label{estimate:equiv1}
		For every $v\in Y\Lambda^l$, there exists
		$u\in(Y\cap L^\infty)\Lambda^l$ such that $du=dv$ and
		\begin{equation*}
			\|u\|_{(Y\cap L^\infty)\Lambda^l}
			\lesssim
			\|v\|_{Y\Lambda^l}.
		\end{equation*}
		
		\item\label{estimate:equiv2}
		Whenever $\omega\in(X+L^1_0)\Lambda^l$ and
		$P\omega\in X\Lambda^l$, one has $\omega\in X\Lambda^l$ and
		\begin{equation*}
			\|\omega\|_{X\Lambda^l}
			\lesssim
			\|\omega\|_{(X+L^1_0)\Lambda^l}
			+
			\|P\omega\|_{X\Lambda^l}.
		\end{equation*}
		
		\item\label{estimate:equiv3}
		Whenever $\omega\in L^1_0\Lambda^l$ and
		$P\omega\in X\Lambda^l$, one has $\omega\in X\Lambda^l$ and
		\begin{equation*}
			\|\omega\|_{X\Lambda^l}
			\lesssim
			\|\omega\|_{L^1_0\Lambda^l}
			+
			\|P\omega\|_{X\Lambda^l}.
		\end{equation*}
		
		\item\label{estimate:equiv4}
		Whenever $\eta\in(X+L^1_0)\Lambda^l$ is coexact, in the sense
		that $P^\perp\eta=\eta$, one has $\eta\in X\Lambda^l$ and
		\begin{equation*}
			\|\eta\|_{X\Lambda^l}
			\lesssim
			\|\eta\|_{(X+L^1_0)\Lambda^l}.
		\end{equation*}
	\end{enumerate}
\end{prop}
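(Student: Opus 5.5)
The plan is to prove the cycle of implications (A)$\Rightarrow$(D)$\Rightarrow$(B)$\Rightarrow$(C)$\Rightarrow$(D)$\Rightarrow$(A). Most links are formal; the two substantial ones are the duality steps (A)$\Leftrightarrow$(D), which use \eqref{eq:dual-sum-intersection}, the reflexivity $X'=Y$, and the fact that $P,P^\perp$ are bounded on $X$ and $Y$ and self-adjoint for the pairing.

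\emph{Formal links.} For (D)$\Rightarrow$(B), take $\omega\in(X+L^1_0)\Lambda^l$ with $P\omega\in X\Lambda^l$ and set $\eta=\omega-P\omega$. Then $\eta\in(X+L^1_0)\Lambda^l$, with norm at most $\|\omega\|_{X+L^1_0}+\|P\omega\|_X$. Applying $P^\perp$ in $\mathscr S_0'$ gives $P^\perp\eta=P^\perp\omega=\eta$, so (D) yields $\eta\in X\Lambda^l$ and hence $\omega=\eta+P\omega\in X\Lambda^l$ with the stated bound. (B)$\Rightarrow$(C) is immediate, since $\|\omega\|_{X+L^1_0}\le\|\omega\|_{L^1_0}$. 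For (C)$\Rightarrow$(D), write a coexact $\eta=x+f$ with $x\in X\Lambda^l$ and $f\in L^1_0\Lambda^l$ nearly optimal. Then $Pf=P\eta-Px=-Px\in X\Lambda^l$ with $\|Pf\|_X\lesssim\|x\|_X$, so (C) gives $f\in X\Lambda^l$ and $\|f\|_X\lesssim\|f\|_{L^1}+\|x\|_X$. Consequently $\eta=x+f\in X\Lambda^l$ with the required bound.

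\emph{(D)$\Rightarrow$(A).} Fix $v\in Y\Lambda^l$. On the coexact subspace $E=\{\eta\in(X+L^1_0)\Lambda^l:P^\perp\eta=\eta\}$, where (D) shows $E\subset X\Lambda^l$, define $\ell(\eta)=\langle \eta, v\rangle$. By (D), $|\ell(\eta)|\le\|\eta\|_X\|v\|_Y\lesssim\|\eta\|_{X+L^1_0}\|v\|_Y$. Extend $\ell$ by Hahn--Banach to all of $(X+L^1_0)\Lambda^l$ with the same norm; by \eqref{eq:dual-sum-intersection} the extension is represented by some $u\in(Y\cap L^\infty)\Lambda^l$ with $\|u\|_{Y\cap L^\infty}\lesssim\|v\|_Y$. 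For any $\phi\in\mathscr S_0\Lambda^{l+1}$, the form $d^*\phi$ lies in $E$, so $\langle d^*\phi,u-v\rangle=0$, which gives $d(u-v)=0$ in $\mathscr S_0'$. One point needs care: the $Y$-pairing and the $L^\infty$–$L^1_0$ pairing must agree on $E\cap\mathscr S_0$. This holds because both extend the integral pairing on $\mathscr S_0$, and for $L^1_0$ we use density of $\mathscr S_0$.

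\emph{(A)$\Rightarrow$(D).} This is the main obstacle, since we must show that $\eta$ genuinely lies in $X$, not merely estimate it. Take $\eta\in(X+L^1_0)\Lambda^l$ coexact. For $w\in\mathscr S_0\Lambda^l$, set $v=P^\perp w\in Y\Lambda^l$. Choose $u$ from (A), so that $d(u-v)=0$, i.e.\ $P^\perp u=P^\perp v=v$. Pairing with $\eta=P^\perp\eta$ and using self-adjointness gives
\begin{equation*}
\langle\eta,w\rangle=\langle\eta,v\rangle=\langle\eta,u\rangle\le\|\eta\|_{X+L^1_0}\|u\|_{Y\cap L^\infty}\lesssim\|\eta\|_{X+L^1_0}\|w\|_Y .
\end{equation*}
Justifying $\langle\eta,v\rangle=\langle\eta,u\rangle$ when $\eta$ has an $L^1_0$ part and $u\in Y\cap L^\infty$ requires approximating $u$ or $\eta$ in $\mathscr S_0$; this is where I expect the technical difficulty. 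Once this is done, density of $\mathscr S_0$ in $Y$ and $X=Y'$ show that $\eta\in X\Lambda^l$ with the claimed norm bound. If the direct pairing argument proves delicate, I would instead establish (C) for smooth compactly supported $\omega$ and then use a limiting argument, relying on Lemma~\ref{lemma:critical-mean-zero} to keep the limits in $L^1_0$.
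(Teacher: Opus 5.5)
Your proposal is correct and is essentially the paper's proof. The formal links use the same manipulations in a different order, your (D)$\Rightarrow$(A) is the paper's Hahn--Banach argument (testing with $d^*\phi$ instead of $P^\perp\phi$), and your (A)$\Rightarrow$(D) is the special case $P\omega=0$ of the paper's (A)$\Rightarrow$(B). The pairing identity you flag, $\langle\eta,u-v\rangle=0$ for coexact $\eta\in(X+L^1_0)\Lambda^l$ and exact $u-v\in(Y\cap L^\infty)\Lambda^l$, is exactly the step the paper takes for granted as ``self-adjointness of $P$''. It follows routinely from the annular regularization: replace $u-v$ by $Q_N(u-v)$, on which $P$ acts by convolution with a Schwartz kernel that can be moved onto $\eta$ to produce $P\eta=0$, then let $N\to\infty$. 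So no fallback argument is needed.
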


\begin{proof}
	We first establish the equivalence of \ref{estimate:equiv2}, \ref{estimate:equiv3}, and \ref{estimate:equiv4}. The implication \ref{estimate:equiv2}$\Rightarrow$\ref{estimate:equiv3} follows immediately from the trivial embedding $L^1_0\subset X+L^1_0$.
	Conversely, assume \ref{estimate:equiv3}, and let
	\begin{equation*}
		\omega=\omega_0+\omega_1,
		\qquad
		\omega_0\in X\Lambda^l,
		\qquad
		\omega_1\in L^1_0\Lambda^l,
	\end{equation*}
	with $P\omega\in X\Lambda^l$. Then
	\begin{equation*}
		P\omega_1=P\omega-P\omega_0\in X\Lambda^l.
	\end{equation*}
	Applying \ref{estimate:equiv3} to $\omega_1$ and using the boundedness of $P$ on $X$, we obtain
	\begin{equation*}
		\begin{aligned}
			\|\omega\|_{X\Lambda^l}
			&\leq \|\omega_0\|_{X\Lambda^l}
			+ \|\omega_1\|_{X\Lambda^l}
			\\
			&\lesssim \|\omega_0\|_{X\Lambda^l}
			+ \|\omega_1\|_{L^1_0\Lambda^l}
			+ \|P\omega\|_{X\Lambda^l}.
		\end{aligned}
	\end{equation*}
	Taking the infimum over all such decompositions proves \ref{estimate:equiv2}.
	
	Next, if \ref{estimate:equiv2} holds and $\eta\in(X+L^1_0)\Lambda^l$ is coexact, then $P\eta=0$, and \ref{estimate:equiv4} follows directly. Conversely, assume \ref{estimate:equiv4}, and let $\omega\in L^1_0\Lambda^l$ satisfy $P\omega\in X\Lambda^l$. Then
	\begin{equation*}
		\eta=P^\perp\omega=\omega-P\omega
	\end{equation*}
	belongs to $(X+L^1_0)\Lambda^l$ and is coexact. Hence
	\begin{equation*}
		\begin{aligned}
			\|\omega\|_{X\Lambda^l}
			&\leq \|P\omega\|_{X\Lambda^l}
			+ \|P^\perp\omega\|_{X\Lambda^l}
			\\
			&\lesssim \|P\omega\|_{X\Lambda^l}
			+ \|P^\perp\omega\|_{(X+L^1_0)\Lambda^l}
			\\
			&\lesssim \|P\omega\|_{X\Lambda^l}
			+ \|\omega\|_{L^1_0\Lambda^l},
		\end{aligned}
	\end{equation*}
	which proves \ref{estimate:equiv3}.
	
	We have thus established that \ref{estimate:equiv2}$\Leftrightarrow$\ref{estimate:equiv3}$\Leftrightarrow$\ref{estimate:equiv4}. It remains only to connect these equivalent estimates to the Bourgain--Brezis selection estimate \ref{estimate:equiv1}. This is done by showing that \ref{estimate:equiv1}$\Rightarrow$\ref{estimate:equiv2} and \ref{estimate:equiv4}$\Rightarrow$\ref{estimate:equiv1}.

	We next prove that \ref{estimate:equiv1} implies \ref{estimate:equiv2}. Let $\omega\in(X+L^1_0)\Lambda^l$ satisfy $P\omega\in X\Lambda^l$, and take $v\in\mathscr S_0\Lambda^l$. By \ref{estimate:equiv1}, there exists $u\in(Y\cap L^\infty)\Lambda^l$ such that $du=dv$ and
	\begin{equation*}
		\|u\|_{(Y\cap L^\infty)\Lambda^l}
		\lesssim
		\|v\|_{Y\Lambda^l}.
	\end{equation*}
	Since $d(v-u)=0$, one has
	\begin{equation*}
		P^\perp(v-u)=0,
		\qquad
		P(v-u)=v-u.
	\end{equation*}
	Using the self-adjointness of $P$, we obtain
	\begin{equation*}
		\langle\omega,v\rangle
		=
		\langle\omega,u\rangle
		+
		\langle P\omega,v-u\rangle.
	\end{equation*}
	Therefore, by \eqref{eq:dual-sum-intersection},
	\begin{equation*}
		\begin{aligned}
			|\langle\omega,v\rangle|
			&\lesssim
			\|\omega\|_{(X+L^1_0)\Lambda^l}
			\|u\|_{(Y\cap L^\infty)\Lambda^l}
			+
			\|P\omega\|_{X\Lambda^l}
			\|v-u\|_{Y\Lambda^l}
			\\
			&\lesssim
			\left(
			\|\omega\|_{(X+L^1_0)\Lambda^l}
			+
			\|P\omega\|_{X\Lambda^l}
			\right)
			\|v\|_{Y\Lambda^l}.
		\end{aligned}
	\end{equation*}
	Since $\mathscr S_0$ is dense in $Y$ and $X=Y'$, this proves that $\omega\in X\Lambda^l$ and establishes \ref{estimate:equiv2}.
	
	It remains to show that \ref{estimate:equiv4} implies \ref{estimate:equiv1}. Let
	\begin{equation*}
		\mathcal E
		=
		\big\{
		\eta\in X\Lambda^l:P^\perp\eta=\eta
		\big\}
	\end{equation*}
	be the closed subspace of coexact forms in $X\Lambda^l$. Given
	$v\in Y\Lambda^l$, we define the linear map
	\begin{equation*}
		\mathcal L_v(\eta)=
		\langle\eta,v\rangle_{X,Y},
		\qquad
		\eta\in\mathcal E.
	\end{equation*}
	By \ref{estimate:equiv4}, we see that
	\begin{equation*}
		|\mathcal L_v(\eta)|
		\leq
		\|\eta\|_{X\Lambda^l}\|v\|_{Y\Lambda^l}
		\lesssim
		\|\eta\|_{(X+L^1_0)\Lambda^l}\|v\|_{Y\Lambda^l}.
	\end{equation*}
	The Hahn--Banach theorem therefore extends $\mathcal L_v$ to a bounded linear functional on $(X+L^1_0)\Lambda^l$.
	
	By the duality relation \eqref{eq:dual-sum-intersection}, this extension is represented by some $u\in(Y\cap L^\infty)\Lambda^l$ satisfying
	\begin{equation*}
		\|u\|_{(Y\cap L^\infty)\Lambda^l}
		\lesssim
		\|v\|_{Y\Lambda^l}.
	\end{equation*}
	Moreover, it holds that
	\begin{equation*}
		\langle\eta,v-u\rangle_{X,Y}=0
		\qquad
		\text{for every $\eta\in\mathcal E$}.
	\end{equation*}
	Now, for every $\phi\in\mathscr S_0\Lambda^l$, the form $P^\perp\phi$ belongs to $\mathcal E$. Hence, by the self-adjointness
	of $P^\perp$, we deduce that
	\begin{equation*}
		\big\langle\phi,P^\perp(v-u)\big\rangle_{X,Y}
		=
		\big\langle P^\perp\phi,v-u\big\rangle_{X,Y}
		=0.
	\end{equation*}
	It follows that $P^\perp(v-u)=0$ in $\mathscr S_0'\Lambda^l$, and therefore $du=dv$. This establishes \ref{estimate:equiv1} and thereby completes the proof.
\end{proof}

The preceding proposition shows that the remainder of the paper may be carried out on the dual side. We will most often prove \ref{estimate:equiv3} and then recover the corresponding Bourgain--Brezis selection estimate from \ref{estimate:equiv1}.

\begin{rem}
	The proposition is stated for $1<p,q<\infty$, which is the range in which the duality of the homogeneous Triebel--Lizorkin and Besov spaces is used directly. In later critical Besov statements, the case $q=1$ follows separately from the embedding
	\begin{equation*}
		\dot B^{\frac np}_{p,1}\subset L^\infty.
	\end{equation*}
\end{rem}


\section{The trace-free Beurling--Ahlfors transform}\label{section:beurling}

The purpose of this section is to isolate the operator underlying the endpoint cancellation used throughout the paper. We first explain its canonical normalization, then establish a scalar reflection-cancellation principle, and finally identify the scalar multiplier components of $S$ and deduce the fundamental estimate stated in Theorem~\ref{thm:intro:beurling}.

\subsection{Definition and canonical normalization}

Recall that
\begin{equation*}
	\widetilde S=P-P^\perp
\end{equation*}
is the generalized Beurling--Ahlfors transform on differential forms. The operator introduced in \eqref{eq:intro:S} can be written in the equivalent forms
\begin{equation}\label{eq:beurling_identities}
	S
	=
	\frac12\left(
	\widetilde S-\frac{2l-n}{n}\operatorname{Id}
	\right)
	=
	\frac{n-l}{n}P-\frac lnP^\perp
	=
	P-\frac ln\operatorname{Id}.
\end{equation}
Thus $S$ is a normalized trace-free part of $\widetilde S$. To be precise, the normalization is chosen so that the difference between its two eigenvalues is equal to one.

The following elementary proposition records the resulting algebraic properties.

\begin{prop}\label{prop:beurling_algebra}
	Let $n\geq2$ and $1\leq l\leq n-1$. For every $\xi\neq0$, the symbol $S(\xi)$ is self-adjoint and acts by
	\begin{equation*}
		\text{multiplication by }\frac{n-l}{n}
		\text{ on }\operatorname{ran}P(\xi),
	\end{equation*}
	and by
	\begin{equation*}
		\text{multiplication by }-\frac ln
		\text{ on }\operatorname{ran}P^\perp(\xi).
	\end{equation*}
	In particular,
	\begin{equation*}
		\operatorname{tr}S(\xi)=0.
	\end{equation*}
	Moreover, $S$ is the unique operator of the form
	\begin{equation*}
		T=aP+bP^\perp
	\end{equation*}
	whose symbol is trace-free and whose two eigenvalues satisfy $a-b=1$.
	The operator $S$ is invertible, with
	\begin{equation*}
		S^{-1}
		=
		\frac n{n-l}P-\frac nlP^\perp.
	\end{equation*}
\end{prop}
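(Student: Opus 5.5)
The plan is to work pointwise at a fixed $\xi\neq0$ and use only the symbol-level facts from Section~\ref{subsec:hodge_projections}. There, $P(\xi)$ and $P^\perp(\xi)$ are the orthogonal projections onto the two summands of
\begin{equation*}
	\Lambda^l=e\wedge\Lambda^{l-1}(e^\perp)\oplus\Lambda^l(e^\perp),
	\qquad e=\xi/|\xi|.
\end{equation*}
In particular, they are self-adjoint, complementary, and have the ranks recorded in \eqref{eq:hodge_ranks}.

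First I will handle self-adjointness and the eigenvalues. Since $S(\xi)=\frac{n-l}{n}P(\xi)-\frac lnP^\perp(\xi)$ by \eqref{eq:beurling_identities}, it is a real linear combination of two orthogonal projections, so it is self-adjoint. The relations $P^2=P$, $PP^\perp=0$ and $P+P^\perp=\operatorname{Id}$ show that $S(\xi)$ acts by $\frac{n-l}{n}$ on $\operatorname{ran}P(\xi)$ and by $-\frac ln$ on $\operatorname{ran}P^\perp(\xi)$.

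Next I will compute the trace from the ranks. Since $\operatorname{tr}P(\xi)=\binom{n-1}{l-1}$ and $\operatorname{tr}P^\perp(\xi)=\binom{n-1}{l}$, I will use the identities
\begin{equation*}
	\binom{n-1}{l-1}=\frac ln\binom nl,
	\qquad
	\binom{n-1}{l}=\frac{n-l}{n}\binom nl.
\end{equation*}
This gives
\begin{equation*}
	\operatorname{tr}S(\xi)=\frac{n-l}{n}\cdot\frac ln\binom nl-\frac ln\cdot\frac{n-l}{n}\binom nl=0.
\end{equation*}

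For uniqueness, I will take $T=aP+bP^\perp$. Then $\operatorname{tr}T(\xi)=\binom nl\big(\frac{al}{n}+\frac{b(n-l)}{n}\big)$, so the trace-free condition is the linear equation $al+b(n-l)=0$. Combined with $a-b=1$, this is a $2\times2$ linear system with determinant $-l-(n-l)=-n\neq0$. Its unique solution is $a=\frac{n-l}{n}$, $b=-\frac ln$, which is $S$. Both ranks are positive because $1\leq l\leq n-1$, so the trace condition genuinely constrains both coefficients. This is also why the phrase ``two eigenvalues'' is meaningful.

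Finally, both eigenvalues are nonzero, again because $1\leq l\leq n-1$. Using the orthogonality relations, I will check directly that
\begin{equation*}
	\Big(\frac{n-l}{n}P-\frac lnP^\perp\Big)\Big(\frac n{n-l}P-\frac nlP^\perp\Big)=P+P^\perp=\operatorname{Id},
\end{equation*}
and similarly in the other order. This identity holds both at the symbol level and as Fourier multipliers on $\mathscr S_0'\Lambda^l$.

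There is no real obstacle here; the only point requiring care is the binomial identity behind the trace computation.
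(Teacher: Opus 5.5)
Your proposal is correct and follows essentially the same route as the paper. The paper also reads off the eigenvalues from the Hodge splitting, rewrites the ranks as $\operatorname{rank}P(\xi)=\frac ln\dim\Lambda^l$ and $\operatorname{rank}P^\perp(\xi)=\frac{n-l}{n}\dim\Lambda^l$ to solve the system $la+(n-l)b=0$, $a-b=1$, and obtains $S^{-1}$ by inverting the two nonzero eigenvalues, which your direct multiplication check simply makes explicit.
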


\begin{proof}
	The first assertions follow immediately from the Hodge splitting.
	
	Next, by \eqref{eq:hodge_ranks}, we see that
	\begin{equation*}
		\operatorname{rank}P(\xi)
		=
		\frac ln\dim\Lambda^l,
		\qquad
		\operatorname{rank}P^\perp(\xi)
		=
		\frac{n-l}{n}\dim\Lambda^l.
	\end{equation*}
	Hence, if $T=aP+bP^\perp$ has trace-free symbol and $a-b=1$, then
	\begin{equation*}
		la+(n-l)b=0,
		\qquad
		a-b=1.
	\end{equation*}
	Solving this system gives
	\begin{equation*}
		a=\frac{n-l}{n},
		\qquad
		b=-\frac ln,
	\end{equation*}
	which establishes the canonical choice of coefficients in the definition of $S$.
	
	Finally, the formula for $S^{-1}$ follows by inverting these two nonzero eigenvalues, which completes the proof.
\end{proof}

Since $S$ is a linear combination of the Hodge projections, it is bounded on all the Triebel--Lizorkin and Besov spaces in the ranges described in Section~\ref{subsec:hodge_projections}. Notice also that
\begin{equation}\label{eq:recover_identity_from_S}
	\operatorname{Id}
	=
	\frac nl(P-S).
\end{equation}
This elementary identity will be used repeatedly below.

The relation with the generalized Beurling--Ahlfors transform is especially
simple in the middle degree. If $n=2l$, then
\begin{equation*}
	S=\frac12\widetilde S.
\end{equation*}
In particular, for $n=2$ and $l=1$, the transform $S$ agrees, up to a normalizing factor and the standard identification of $1$-forms with complex functions, with the classical planar Beurling--Ahlfors transform.

\subsection{Reflection cancellation}

We now isolate the scalar multiplier estimate that drives the analysis. For $\sigma\in\mathbb S^{n-1}$, let
\begin{equation*}
	\rho_\sigma\xi
	=
	\xi-2(\xi\cdot\sigma)\sigma
\end{equation*}
be the orthogonal reflection across the hyperplane $\sigma^\perp$.

\begin{lem}[Reflection cancellation]\label{lemma:bound:oddness}
	Let $m\in C^\infty(\mathbb R^n\setminus\{0\})$, with $n\geq 1$, be homogeneous of degree
	zero. Assume that, for some $\sigma\in\mathbb S^{n-1}$,
	\begin{equation}\label{eq:reflection_oddness}
		m(\rho_\sigma\xi)=-m(\xi)
		\qquad
		\text{for every $\xi\neq0$}.
	\end{equation}
	Then
	\begin{equation*}
		T_m=|D|^{-n}m(D):\mathscr M(\mathbb R^n)\rightarrow
		L^\infty(\mathbb R^n)
	\end{equation*}
	is bounded. Moreover,
	\begin{equation*}
		T_m[L^1]\subset C,
		\qquad
		T_m[L^1_0]\subset C_0.
	\end{equation*}
\end{lem}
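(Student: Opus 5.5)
The plan is to compute the convolution kernel of $T_m$ and show that it is a bounded function. Boundedness from $\mathscr M$ to $L^\infty$ and the continuity statements then follow from properties of this kernel.

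\textbf{The kernel.} Since $m$ is smooth and homogeneous of degree zero on $\mathbb S^{n-1}$, the symbol $|\xi|^{-n}m(\xi)$ is homogeneous of degree $-n$. This is the critical degree: such a symbol is locally integrable only if $m$ has mean zero on the sphere. Reflection oddness \eqref{eq:reflection_oddness} gives exactly this, because $\rho_\sigma$ preserves surface measure, so $\int_{\mathbb S^{n-1}}m\,d\sigma=0$. By the classical theory of homogeneous distributions (Stein), a symbol of degree $-n$ with mean-zero angular part has inverse Fourier transform
\begin{equation*}
	K(x)=c_0\int_{\mathbb S^{n-1}} m(\theta)\log\frac{1}{|x\cdot\theta|}\,d\theta+\Omega(x/|x|),
\end{equation*}
up to constants and modulo polynomials. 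Any $\log|x|$ term carries the coefficient $\int m=0$ and therefore drops out. What remains is a function $\Omega$ that is homogeneous of degree zero and smooth away from the origin. The cleanest way to see this: $x\cdot\nabla K$ is the inverse Fourier transform of $-(n+\xi\cdot\nabla_\xi)(|\xi|^{-n}m)=0$ modulo a multiple of $\delta$, and that multiple is again $\int m=0$. Hence $K$ is homogeneous of degree zero, i.e. $K=\Omega(x/|x|)$ plus a constant. Smoothness of $\Omega$ on the sphere follows from the smoothness of $m$, for instance by writing $m$ in spherical harmonics of degree $k\ge1$. Each such harmonic contributes $\gamma_{k}Y_k(x/|x|)$ with $|\gamma_k|\lesssim k^{-?}$ decay, and the rapid decay of the coefficients of $m$ gives a uniformly convergent series. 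In particular $K\in L^\infty$.

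I will define $T_m\mu=K*\mu$ modulo constants. The constant ambiguity matches the paper's convention of identifying $L^\infty/\mathbb R$ with $L^\infty$. This gives $\|T_m\mu\|_{L^\infty}\le\|K\|_{L^\infty}\|\mu\|_{\mathscr M}$. I still need to check that $K*\mu$ agrees with $T_m\mu$ as elements of $\mathscr S_0'$. For this I test against $\phi\in\mathscr S_0$: $\widehat\phi$ vanishes to infinite order at the origin, so the pairing with $|\xi|^{-n}m\widehat\mu$ is absolutely convergent, and Fubini applies.

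\textbf{Continuity.} Write $K=\Omega(x/|x|)$, which is bounded and continuous away from $0$. For $f\in L^1$, continuity of $K*f$ follows from dominated convergence: $K(x_k-y)\to K(x-y)$ for a.e. $y$, with dominating function $\|K\|_\infty|f|$. For $f\in L^1_0$ and $|x|\to\infty$, I write
\begin{equation*}
	K*f(x)=\int\big(K(x-y)-K(x)\big)f(y)\,dy.
\end{equation*}
For fixed $y$, $K(x-y)-K(x)\to0$ as $|x|\to\infty$, since $\Omega$ is uniformly continuous on the sphere and the directions of $x-y$ and $x$ converge to each other. Dominated convergence then gives $K*f\in C_0$.

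\textbf{Main obstacle.} The delicate step is the kernel computation, namely proving that $K$ is a bounded function with no logarithmic part. Here oddness is used only through the vanishing of the spherical mean $\int_{\mathbb S^{n-1}}m=0$. Rather than relying on the homogeneity argument above, a more hands-on route is available. Split $|\xi|^{-n}m$ with a Littlewood--Paley decomposition, $K=\sum_j K_j$, where $K_j(x)=K_0(2^jx)$ is the inverse transform of $\varphi(2^{-j}\xi)|\xi|^{-n}m(\xi)$. Then $\sum_{j\ge0}$ converges absolutely away from $0$ by Schwartz decay. For $j<0$, the value $K_0(0)=\int\varphi|\xi|^{-n}m\,d\xi=0$ by oddness, and the cancellation $|K_0(2^jx)-K_0(0)|\lesssim\min(2^j|x|,1)$ makes $\sum_{j<0}$ converge. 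The remaining task is a uniform bound on $\sum_{j\ge0}K_0(2^jx)$ as $x\to0$. For this I would use the homogeneity trick: the partial sums over a dyadic window telescope, and I would check that the sum equals a degree-zero function. This uniformity near the origin is the step I expect to require the most care.
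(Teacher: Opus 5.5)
Your proof is correct, but it takes a different route from the paper's. The paper never assembles the global kernel. It sets $\mathcal K=\mathscr F^{-1}(\varphi(\xi)|\xi|^{-n}m(\xi))$, so that $\Delta_jT_m\mu=\mathcal K(2^j\cdot)*\mu$. It then uses oddness to get $\mathcal K=0$ on $\sigma^\perp$, hence $|\mathcal K(x)|\lesssim|x|(1+|x|)^{-N}$, and concludes $\sup_x\sum_{j\in\mathbb Z}|\mathcal K(2^jx)|\lesssim1$. Continuity comes from translation continuity in $L^1$. The $C_0$ claim comes from density of $\mathscr S_0$ in $L^1_0$ together with $T_m[\mathscr S_0]\subset\mathscr S_0$.

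You instead use the theory of homogeneous distributions. The condition $\int_{\mathbb S^{n-1}}m=0$ makes $\mathrm{p.v.}\,|\xi|^{-n}m$ homogeneous of degree $-n$, so $K$ is homogeneous of degree $0$. Smoothness on the sphere then follows either from the classical fact that Fourier transforms of homogeneous distributions smooth off the origin are smooth off the origin, or from Stein's spherical-harmonic formula. For your placeholder, you only need the factors $\gamma_k$ to be bounded, and they are, of order $k^{-n/2}$.

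What your route buys:
\begin{itemize}
\item An explicit description $K=\Omega(x/|x|)$ of the kernel.
\item A very clean proof of the $C$ and $C_0$ statements, via dominated convergence and $\int(K(x-y)-K(x))f(y)\,dy$.
\item A clear view that only the spherical mean-zero condition matters. This is also all the paper's argument really uses, since it only needs $\mathcal K(0)=0$ and $\varphi$ is radial.
\end{itemize}

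What it costs:
\begin{itemize}
\item It depends on nontrivial external results.
\item Knowing that the global kernel is bounded does not by itself give $\|\sum_j|\Delta_jT_m\mu|\|_{L^\infty}\lesssim\|\mu\|_{\mathscr M}$. The paper's dyadic computation yields this for free (Corollary~\ref{scholium:bound_m}), and the Besov and multilinear arguments later need it.
\end{itemize}

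Your ``hands-on route'' is precisely the paper's proof, and the difficulty you anticipate there is not real. From $K_0(0)=0$ and Schwartz decay you get $|K_0(z)|\lesssim\min(|z|,|z|^{-N})$. Splitting the sum according to $2^j|x|\le1$ versus $2^j|x|>1$, rather than by the sign of $j$, gives two geometric series bounded uniformly in $x$. It is the split by the sign of $j$ that creates the apparent problem. As you wrote it, the bound $\min(2^j|x|,1)$ alone would not even make $\sum_{j<0}$ uniformly bounded as $|x|\to\infty$; the decay of $K_0$ is needed there too.
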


Before proceeding to the proof, observe that $T_m$ is well defined on homogeneous distributions. Indeed, if $\phi\in\mathscr S_0$, then $\widehat\phi$ and all its derivatives vanish to infinite order at the origin. Since $|\xi|^{-n}m(\xi)$ is smooth away from the origin and homogeneous of degree $-n$, the product
\begin{equation*}
	|\xi|^{-n}m(\xi)\widehat\phi(\xi)
\end{equation*}
extends to a Schwartz function which vanishes to infinite order at the origin. Hence
\begin{equation*}
	T_m[\mathscr S_0]\subset\mathscr S_0,
\end{equation*}
and $T_m$ acts on $\mathscr S_0'$ by duality.

\begin{proof}
	The argument is a dyadic refinement of the reflection cancellation used in \cite[Section~4]{bourgain:brezis:2003}.
	
	Let $\varphi$ be the radial Littlewood--Paley cutoff fixed in Section~\ref{sec:preliminaries} and set
	\begin{equation*}
		\mathcal K
		=
		\mathscr F^{-1}\left(
		\varphi(\xi)|\xi|^{-n}m(\xi)
		\right).
	\end{equation*}
	Since the multiplier in parentheses is smooth and compactly supported away from the origin, it holds that $\mathcal K\in\mathscr S_0$. Homogeneity then gives
	\begin{equation}\label{eq:dyadic_kernel_representation}
		\Delta_jT_m\mu(x)
		=
		\int_{\mathbb R^n}\mathcal K\big(2^j(x-y)\big)d\mu(y).
	\end{equation}

	Because $\varphi$ and $|\xi|^{-n}$ are radial, the reflection property \eqref{eq:reflection_oddness} implies
	\begin{equation*}
		\mathcal K(\rho_\sigma x)=-\mathcal K(x).
	\end{equation*}
	In particular, $\mathcal K$ vanishes on $\sigma^\perp$. By the mean value theorem in the direction $\sigma$, combined with the rapid decay of $\mathcal K$, one obtains, for every $N>1$,
	\begin{equation*}
		|\mathcal K(x)|
		\lesssim \frac{|x\cdot\sigma|}{(1+|x|)^N}\leq
		\frac{|x|}{(1+|x|)^N}.
	\end{equation*}
	Consequently,
	\begin{equation}\label{bound:basic}
		\sup_{x\in\mathbb R^n}
		\sum_{j\in\mathbb Z}
		|\mathcal K(2^jx)|
		\lesssim
		\sup_{x\in\mathbb R^n}
		\sum_{j\in\mathbb Z}
		\frac{|2^jx|}{1+|2^jx|^N}
		\lesssim 1.
	\end{equation}
	Indeed, for $x\neq0$, one splits the sum according to whether $2^j|x|\leq 1$ or $2^j|x|>1$ to deduce a uniform bound.
	
	It now follows from \eqref{eq:dyadic_kernel_representation} and
	\eqref{bound:basic} that
	\begin{equation}\label{estimate:stronger_uniform_convergence}
		\sum_{j\in\mathbb Z}|\Delta_jT_m\mu(x)|
		\leq
		\int_{\mathbb R^n}
		\sum_{j\in\mathbb Z}
		\big|\mathcal K\big(2^j(x-y)\big)\big|d|\mu|(y)
		\lesssim
		\|\mu\|_{\mathscr M}.
	\end{equation}
	The dyadic series therefore converges absolutely pointwise and represents $T_m\mu$ in $\mathscr S_0'$. This proves
	\begin{equation*}
		\|T_m\mu\|_{L^\infty}
		\lesssim
		\|\mu\|_{\mathscr M}.
	\end{equation*}

	If $f\in L^1$, translation invariance and the preceding estimate give
	\begin{equation*}
		\|\tau_hT_mf-T_mf\|_{L^\infty}
		\lesssim
		\|\tau_hf-f\|_{L^1},
	\end{equation*}
	where $\tau_hf(x)=f(x-h)$, which tends to zero as $h\to0$. Thus $T_mf$ has a bounded uniformly continuous representative.
	
	Finally, if $f\in L^1_0$, choose an approximating sequence $f_k\in\mathscr S_0$ with $f_k\to f$ in $L^1$. Since $T_m[\mathscr S_0]\subset\mathscr S_0$, the functions $T_mf_k$ belong to $C_0$. Then, the boundedness of $T_m:L^1\to L^\infty$ shows that $T_mf_k\to T_mf$ uniformly, and hence $T_mf\in C_0$, which concludes the proof.
\end{proof}

The previous proof gives a strengthened Littlewood--Paley estimate that will be used in the Besov and multilinear arguments.

\begin{cor}\label{scholium:bound_m}
	Under the assumptions of Lemma~\ref{lemma:bound:oddness},
	\begin{equation*}
		\Big\|
		\sum_{j\in\mathbb Z}
		\big|\Delta_j|D|^{-n}m(D)\mu\big|
		\Big\|_{L^\infty}
		\lesssim
		\|\mu\|_{\mathscr M}.
	\end{equation*}
	If $\mu=f$ belongs to $L^1$ or $L^1_0$, then the function
	\begin{equation*}
		\sum_{j\in\mathbb Z}
		\big|\Delta_j|D|^{-n}m(D)f\big|
	\end{equation*}
	belongs to $C$ or $C_0$, respectively. The same conclusions hold with $\Delta_j^2$ in place of $\Delta_j$.
\end{cor}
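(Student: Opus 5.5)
The plan is to reuse the kernel bound \eqref{bound:basic} established in the proof of Lemma~\ref{lemma:bound:oddness}, which already controls the absolute Littlewood--Paley sum and not only $T_m\mu$ itself.

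\textbf{The $L^\infty$ estimate.} By \eqref{eq:dyadic_kernel_representation} we have $\Delta_j|D|^{-n}m(D)\mu(x)=\int\mathcal K(2^j(x-y))\,d\mu(y)$. Estimate \eqref{estimate:stronger_uniform_convergence} then gives pointwise
\begin{equation*}
\sum_{j\in\mathbb Z}\big|\Delta_j|D|^{-n}m(D)\mu(x)\big|\le\int\sum_j|\mathcal K(2^j(x-y))|\,d|\mu|(y)\lesssim\|\mu\|_{\mathscr M},
\end{equation*}
uniformly in $x$. This is exactly the first claim.

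\textbf{Replacing $\Delta_j$ by $\Delta_j^2$.} We repeat the argument with the kernel $\mathcal K_2=\mathscr F^{-1}(\varphi^2(\xi)|\xi|^{-n}m(\xi))\in\mathscr S_0$. The multiplier $\varphi^2|\xi|^{-n}$ is still radial, so $\mathcal K_2$ is odd under $\rho_\sigma$ and vanishes on $\sigma^\perp$. The bound $|\mathcal K_2(x)|\lesssim|x|(1+|x|)^{-N}$, and hence \eqref{bound:basic}, therefore hold verbatim.

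\textbf{Continuity for $f\in L^1$.} Set $G_f(x)=\sum_j|\Delta_j T_mf(x)|$, with $T_m=|D|^{-n}m(D)$. Each term is continuous, since it is the convolution of an $L^1$ function with a Schwartz kernel. Writing $\Delta_j T_m f(x+h)-\Delta_j T_m f(x)=\Delta_j T_m(\tau_{-h}f-f)(x)$ and using the reverse triangle inequality termwise, we get
\begin{equation*}
|G_f(x+h)-G_f(x)|\le G_{\tau_{-h}f-f}(x)\lesssim\|\tau_{-h}f-f\|_{L^1}\to0.
\end{equation*}
So $G_f$ is bounded and uniformly continuous, hence in $C$. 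Alternatively, uniform convergence of the series follows from dominated convergence: the tails $\sum_{|j|>J}|\mathcal K(2^j(x-y))|$ are uniformly bounded and tend to $0$ for $x\neq y$. The increment argument is cleaner, so we use it.

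\textbf{Vanishing at infinity for $f\in L^1_0$.} This is the main step. The key inequality is $|G_f-G_g|\le G_{f-g}$, which gives $\|G_f-G_g\|_{L^\infty}\lesssim\|f-g\|_{L^1}$. By density of $\mathscr S_0$ in $L^1_0$, it therefore suffices to show $G_g\in C_0$ for $g\in\mathscr S_0$; a uniform limit of $C_0$ functions is in $C_0$.

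For $g\in\mathscr S_0$, write $\Delta_jT_mg=\Delta_j h$ with $h=T_mg\in\mathscr S_0$. We then need $\sum_j|\Delta_jh(x)|\to0$ as $|x|\to\infty$. This part takes some care, because infinitely many scales contribute.

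\emph{High frequencies.} Here $|\Delta_jh(x)|\le\|\Delta_jh\|_{L^\infty}\lesssim 2^{-j}\|\nabla h\|_{L^\infty}$, so the tail $j>J$ is uniformly small.

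\emph{Low frequencies.} Use $\Delta_j h=\Delta_j|D|^{n+1}(|D|^{-n-1}h)$; the moment cancellation of $h$ gives $\|\Delta_jh\|_{L^\infty}\lesssim 2^{j(n+1)}\||D|^{-n-1}h\|_{L^1}$, and $|D|^{-n-1}h\in\mathscr S_0$. Hence the tail $j<-J$ is uniformly small.

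\emph{Middle frequencies.} Each of the finitely many terms $\Delta_jh$ is Schwartz, so it vanishes at infinity.

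Combining the three ranges gives $G_g\in C_0$, and the same argument applies with $\Delta_j^2$ in place of $\Delta_j$.
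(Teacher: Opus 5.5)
Your proof is correct and follows essentially the same route as the paper. The $L^\infty(\ell^1)$ bound is read off from \eqref{estimate:stronger_uniform_convergence}, and continuity comes from translation continuity in $L^1$; your inequality $|G_f-G_g|\le G_{f-g}$ is just the scalar form of the paper's $\ell^1$-valued formulation. Vanishing at infinity then follows by approximating with $\mathscr S_0$ and using the decay of the dyadic blocks of $T_mg\in\mathscr S_0$ at high and low frequencies.

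One small remark concerns the low-frequency tail, which needs no moment cancellation: already $\|\Delta_jh\|_{L^\infty}\lesssim 2^{jn}\|h\|_{L^1}$. Also, the bound you wrote in terms of $\||D|^{-n-1}h\|_{L^1}$ actually carries the factor $2^{j(2n+1)}$ rather than $2^{j(n+1)}$. This is harmless for the range $j\le0$ you use.
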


\begin{proof}
	The $L^\infty$ estimate is \eqref{estimate:stronger_uniform_convergence}. Equivalently, the map
	\begin{equation*}
		f\mapsto
		\big(\Delta_jT_mf\big)_{j\in\mathbb Z}
	\end{equation*}
	is bounded from $L^1$ into $L^\infty(\ell^1)$.
	
	As before, translation continuity in $L^1$ therefore gives continuity of this $\ell^1$-valued function. Moreover, if $f\in L^1_0$, we can approximate it in $L^1$ by elements of $\mathscr S_0$. Then, for such an element $g\in \mathscr S_0$, one has $T_mg\in\mathscr S_0$, and the rapid decay of its dyadic blocks at both high and low frequencies shows that $\big(\Delta_jT_mg\big)_j$ belongs to $C_0(\ell^1)$. The corresponding vanishing at infinity assertion for $f$ then follows by uniform convergence.
	
	Finally, we note that the same arguments apply with $\Delta_j^2$ in place of $\Delta_j$, which concludes the proof.
\end{proof}

\subsection{Multiplier structure and endpoint bounds}

We next show that the trace-free normalization of the Beurling--Ahlfors transform forces every scalar component of $S$ to have the reflection symmetry required in Lemma~\ref{lemma:bound:oddness}. Let
\begin{equation*}
	\varepsilon_i\omega=dx_i\wedge\omega,
	\qquad
	\iota_i\omega=\iota_{e_i}\omega,
	\qquad
	A_i=\varepsilon_i\iota_i.
\end{equation*}
On $\Lambda^l$, one has
\begin{equation}\label{eq:sum_epsilon_iota}
	\sum_{i=1}^nA_i=l\operatorname{Id}.
\end{equation}
Indeed, $A_i$ acts as the identity on a basis form $dx_I$ when $i\in I$ and
annihilates it when $i\notin I$.

\begin{prop}[Multiplier decomposition]\label{prop:beurling_multiplier_decomposition}
	For every $\xi\neq0$,
	\begin{equation}\label{eq:beurling_multiplier_decomposition}
		S(\xi)=
		\frac1n
		\sum_{1\leq i<j\leq n}
		\frac{\xi_i^2-\xi_j^2}{|\xi|^2}
		(A_i-A_j)
		+\sum_{\substack{1\leq i,j\leq n\\ i\neq j}}
		\frac{\xi_i\xi_j}{|\xi|^2}
		\varepsilon_i\iota_j.
	\end{equation}
	
	Consequently, every matrix coefficient of $S(\xi)$ is a finite linear
	combination of scalar multipliers of the forms
	\begin{equation*}
		m_{ij}^{\sharp}(\xi)
		=
		\frac{\xi_i^2-\xi_j^2}{|\xi|^2},
		\qquad
		m_{ij}^{\flat}(\xi)
		=
		\frac{\xi_i\xi_j}{|\xi|^2},
		\qquad
		i\neq j.
	\end{equation*}
	Each of these multipliers is odd under an orthogonal reflection.
\end{prop}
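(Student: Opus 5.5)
Since $S=P-\frac ln\operatorname{Id}$ by \eqref{eq:beurling_identities}, and $P(\xi)=|\xi|^{-2}\varepsilon_\xi\iota_\xi$, the plan is to expand the symbol of $P$ in coordinates and then subtract $\frac ln\operatorname{Id}$, which \eqref{eq:sum_epsilon_iota} lets us write as $\frac1n\sum_iA_i$. Writing $\xi=\sum_i\xi_ie_i$, bilinearity gives
\begin{equation*}
	\varepsilon_\xi\iota_\xi=\sum_{i,j}\xi_i\xi_j\varepsilon_i\iota_j
	=\sum_i\xi_i^2A_i+\sum_{i\neq j}\xi_i\xi_j\varepsilon_i\iota_j .
\end{equation*}
The off-diagonal part already has the claimed form after division by $|\xi|^2$. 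This leaves the identity
\begin{equation*}
	\sum_i\frac{\xi_i^2}{|\xi|^2}A_i-\frac1n\sum_iA_i
	=\frac1n\sum_{i<j}\frac{\xi_i^2-\xi_j^2}{|\xi|^2}(A_i-A_j).
\end{equation*}

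To prove it, I expand the right-hand side. The coefficient of $A_i$ there is $\frac1n\sum_{j\neq i}(\xi_i^2-\xi_j^2)/|\xi|^2$, since the pair $\{i,j\}$ contributes $+(\xi_i^2-\xi_j^2)$ to $A_i$ whether $i<j$ or $j<i$, the sign flipping consistently. This sum equals $\frac1n\big((n-1)\xi_i^2-(|\xi|^2-\xi_i^2)\big)/|\xi|^2=\xi_i^2/|\xi|^2-\frac1n$, which matches the left-hand side. This establishes \eqref{eq:beurling_multiplier_decomposition}. Since $A_i-A_j$ and $\varepsilon_i\iota_j$ are constant matrices in the basis $dx_I$, every matrix coefficient of $S(\xi)$ is a finite linear combination of $m^\sharp_{ij}$ and $m^\flat_{ij}$.

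For the oddness, take $m^\flat_{ij}$ and $\sigma=e_i$. The reflection $\rho_{e_i}$ flips $\xi_i$ and leaves $\xi_j$ and $|\xi|$ unchanged, so $m^\flat_{ij}\circ\rho_{e_i}=-m^\flat_{ij}$. For $m^\sharp_{ij}$, take $\sigma=(e_i-e_j)/\sqrt2$. Then $\rho_\sigma$ swaps the coordinates $\xi_i$ and $\xi_j$, fixes the others, and preserves $|\xi|$, so $m^\sharp_{ij}$ changes sign. Both multipliers are smooth away from the origin and homogeneous of degree zero, so Lemma~\ref{lemma:bound:oddness} applies to each of them.

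The only step requiring care is the bookkeeping in the diagonal identity, namely checking that each unordered pair contributes with the correct sign to both $A_i$ and $A_j$. Everything else is direct substitution.
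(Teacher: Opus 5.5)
Your proposal is correct and follows essentially the same route as the paper. You expand $|\xi|^{-2}\varepsilon_\xi\iota_\xi$ bilinearly, use \eqref{eq:sum_epsilon_iota} to write $\frac ln\operatorname{Id}=\frac1n\sum_iA_i$, regroup the diagonal terms into the pairwise differences $A_i-A_j$, and check oddness under the reflections across $e_i^\perp$ and $\big((e_i-e_j)/\sqrt2\big)^\perp$; your explicit coefficient check for $A_i$ just supplies the bookkeeping that the paper leaves to the reader.
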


\begin{proof}
	By \eqref{eq:beurling_identities} and the symbol formula for $P$,
	\begin{equation*}
		S(\xi)
		=
		\frac1{|\xi|^2}
		\sum_{i,j=1}^n\xi_i\xi_j\varepsilon_i\iota_j
		-\frac ln\operatorname{Id}.
	\end{equation*}
	Using \eqref{eq:sum_epsilon_iota}, the diagonal part of the sum becomes
	\begin{equation*}
		\sum_{i=1}^n
		\left(
		\frac{\xi_i^2}{|\xi|^2}-\frac1n
		\right)A_i
		=
		\frac1n
		\sum_{1\leq i<j\leq n}
		\frac{\xi_i^2-\xi_j^2}{|\xi|^2}(A_i-A_j),
	\end{equation*}
	which proves \eqref{eq:beurling_multiplier_decomposition}.
	
	The multiplier $m_{ij}^{\flat}$ is odd under reflection across $e_i^\perp$. The multiplier $m_{ij}^{\sharp}$ is odd under the reflection that exchanges the $i$th and $j$th coordinates, namely the reflection across the hyperplane orthogonal to $(e_i-e_j)/\sqrt2$. This concludes the proof.
\end{proof}

We can now prove the endpoint estimate announced in the introduction.

\begin{prop}[Endpoint cancellation for $S$]\label{prop:Beurling_transform}
	Let $n\geq2$ and $1\leq l\leq n-1$. Then
	\begin{equation*}
		|D|^{-n}S:
		\mathscr M \Lambda^l
		\rightarrow
		L^\infty \Lambda^l
	\end{equation*}
	is bounded. Moreover,
	\begin{equation*}
		|D|^{-n}S[L^1\Lambda^l]\subset C\Lambda^l,
		\qquad
		|D|^{-n}S[L^1_0\Lambda^l]\subset C_0\Lambda^l.
	\end{equation*}
\end{prop}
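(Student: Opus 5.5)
The proof combines Proposition~\ref{prop:beurling_multiplier_decomposition} with Lemma~\ref{lemma:bound:oddness}, applied coefficientwise. By \eqref{eq:beurling_multiplier_decomposition}, the operator $|D|^{-n}S$ acting on a $\Lambda^l$-valued measure $\mu=\sum_{|I|=l}\mu_I\,dx_I$ can be written as
\begin{equation*}
	|D|^{-n}S\mu
	=
	\frac1n\sum_{1\leq i<j\leq n}
	T_{m^\sharp_{ij}}\big[(A_i-A_j)\mu\big]
	+
	\sum_{i\neq j}
	T_{m^\flat_{ij}}\big[\varepsilon_i\iota_j\mu\big].
\end{equation*}
Here $T_m=|D|^{-n}m(D)$ acts coefficientwise. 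The constant matrices $A_i-A_j$ and $\varepsilon_i\iota_j$ act on the finite-dimensional fibre $\Lambda^l$ and commute with the scalar Fourier multipliers. This identity is first justified on $\mathscr S_0\Lambda^l$, at the symbol level. It then extends to measures by duality, since every operator involved maps $\mathscr S_0$ into itself, as noted after Lemma~\ref{lemma:bound:oddness}.

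Each coefficient of $(A_i-A_j)\mu$ and of $\varepsilon_i\iota_j\mu$ is a linear combination, with coefficients in $\{0,\pm1\}$, of the scalar measures $\mu_I$. Its total variation is therefore bounded by a constant times $\|\mu\|_{\mathscr M\Lambda^l}$. Each scalar multiplier $m^\sharp_{ij}$ and $m^\flat_{ij}$ is smooth away from the origin and homogeneous of degree zero. Proposition~\ref{prop:beurling_multiplier_decomposition} also shows that each is odd under an orthogonal reflection: across $e_i^\perp$ for $m^\flat_{ij}$, and across $(e_i-e_j)^\perp$ for $m^\sharp_{ij}$. Hence Lemma~\ref{lemma:bound:oddness} applies to each of them and gives the bound $\mathscr M\to L^\infty$. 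Summing finitely many terms yields
\begin{equation*}
	\big\||D|^{-n}S\mu\big\|_{L^\infty\Lambda^l}\lesssim\|\mu\|_{\mathscr M\Lambda^l}.
\end{equation*}

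For the continuity statements, note that if $\mu=f\in L^1\Lambda^l$, then each coefficient of $(A_i-A_j)f$ and $\varepsilon_i\iota_jf$ lies in $L^1$. If moreover $f\in L^1_0\Lambda^l$, these coefficients lie in $L^1_0$, because taking linear combinations of the coefficients preserves the mean-zero property. Lemma~\ref{lemma:bound:oddness} then places each term in $C$, respectively in $C_0$. Both spaces are closed under finite sums, which gives the two inclusions.

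The only point requiring care is the identification of the operator with the sum of scalar pieces on measures, rather than only on Schwartz data. The constant $-\frac ln\operatorname{Id}$ in $S$ has already been absorbed into the trace-free decomposition, so no bare $|D|^{-n}$ term survives; this is the term that would make the estimate fail. I expect this identification to be routine through the dyadic representation \eqref{estimate:stronger_uniform_convergence}. That representation converges absolutely and is linear in $\mu$, so the decomposition passes to the limit termwise.
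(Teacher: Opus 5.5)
Your proposal is correct and follows the paper's proof. Like the paper, you expand $S(\xi)$ via Proposition~\ref{prop:beurling_multiplier_decomposition} into the reflection-odd scalar multipliers $m^\sharp_{ij}$ and $m^\flat_{ij}$, composed with constant matrices on $\Lambda^l$, and then apply Lemma~\ref{lemma:bound:oddness} coefficientwise and sum the finitely many terms. Your extra details only make explicit what the paper's two-line argument leaves implicit: matrix entries in $\{0,\pm1\}$, preservation of $L^1_0$ under linear combinations of coefficients, and transfer of the symbol identity from $\mathscr S_0\Lambda^l$ to measures by duality.
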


\begin{proof}
	By Proposition~\ref{prop:beurling_multiplier_decomposition}, each coefficient of $S\omega$ is a finite linear combination of scalar multipliers satisfying the hypotheses of Lemma~\ref{lemma:bound:oddness}. Applying that lemma coefficientwise proves the result.
\end{proof}

Proposition~\ref{prop:Beurling_transform} proves Theorem~\ref{thm:intro:beurling}. Its dyadic strengthening is the form that will enter the Besov estimates.

\begin{cor}\label{scholium:bound_S}
	Under the assumptions of Proposition~\ref{prop:Beurling_transform},
	\begin{equation*}
		\Big\|
		\sum_{j\in\mathbb Z}
		\big|\Delta_j|D|^{-n}S\mu\big|
		\Big\|_{L^\infty}
		\lesssim
		\|\mu\|_{\mathscr M\Lambda^l}.
	\end{equation*}
	If $\mu=\omega$ belongs to $L^1\Lambda^l$ or $L^1_0\Lambda^l$, then the function
	\begin{equation*}
		\sum_{j\in\mathbb Z}
		\big|\Delta_j|D|^{-n}S\omega\big|
	\end{equation*}
	belongs to $C$ or $C_0$, respectively. The same conclusions hold with
	$\Delta_j^2$ in place of $\Delta_j$.
\end{cor}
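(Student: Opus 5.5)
The plan is to reduce Corollary~\ref{scholium:bound_S} coefficientwise to the scalar statement of Corollary~\ref{scholium:bound_m}, exactly as Proposition~\ref{prop:Beurling_transform} was reduced to Lemma~\ref{lemma:bound:oddness}.

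\textbf{Step 1: write each coefficient as a sum of reflection-odd multipliers.} By Proposition~\ref{prop:beurling_multiplier_decomposition}, for each multi-index $I$ with $|I|=l$ the coefficient $(S\mu)_I$ is a finite sum
\begin{equation*}
	(S\mu)_I=\sum_{k} c_k\, m_k(D)\mu_{J_k},
\end{equation*}
with constant coefficients $c_k$. Each multiplier $m_k$ is one of $m^\sharp_{ij}$ or $m^\flat_{ij}$ with $i\neq j$. These multipliers are smooth away from the origin, homogeneous of degree zero, and odd under some orthogonal reflection $\rho_\sigma$. The number of terms and the constants depend only on $n$ and $l$.

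\textbf{Step 2: apply the scalar corollary to each term.} Since $\Delta_j$ and $|D|^{-n}$ act coefficientwise and commute with $m_k(D)$, the triangle inequality gives, pointwise,
\begin{equation*}
	\sum_{j\in\mathbb Z}\big|\Delta_j|D|^{-n}S\mu\big|
	\lesssim\sum_{I}\sum_k |c_k|\sum_{j\in\mathbb Z}\big|\Delta_j|D|^{-n}m_k(D)\mu_{J_k}\big|,
\end{equation*}
where on the left the absolute value is the Euclidean norm on $\Lambda^l$. Corollary~\ref{scholium:bound_m} bounds each inner sum in $L^\infty$ by $\|\mu_{J_k}\|_{\mathscr M}\le\|\mu\|_{\mathscr M\Lambda^l}$, which proves the estimate. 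The case of $\Delta_j^2$ is identical, since Corollary~\ref{scholium:bound_m} also covers $\Delta_j^2$.

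\textbf{Step 3: continuity and decay.} For $\omega\in L^1\Lambda^l$ or $L^1_0\Lambda^l$, the components $\omega_{J_k}$ lie in $L^1$ or $L^1_0$ respectively. Corollary~\ref{scholium:bound_m} then shows that each sequence $\big(\Delta_j|D|^{-n}m_k(D)\omega_{J_k}\big)_j$ is a $C(\ell^1)$ function, or a $C_0(\ell^1)$ function in the mean-zero case; this is the form its proof actually establishes. A finite linear combination of such sequences stays in the same class. The map
\begin{equation*}
	(a_j)_j\mapsto\sum_j|a_j|
\end{equation*}
is Lipschitz from $\ell^1(\Lambda^l)$ to $\mathbb R$, so composing with it yields a function in $C$ or $C_0$.

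\textbf{Main obstacle.} The only delicate point is Step 3. The triangle-inequality majorant controls the sum only from above and does not by itself transfer continuity. One therefore has to work at the level of $\ell^1$-valued continuity of the sequence $\big(\Delta_j|D|^{-n}S\omega\big)_j$, which is linear in $\omega$, and only afterwards take the $\ell^1$ norm.
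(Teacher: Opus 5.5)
Your proposal is correct and matches the paper's proof, which is exactly this coefficientwise reduction to Corollary~\ref{scholium:bound_m} via the multiplier decomposition of Proposition~\ref{prop:beurling_multiplier_decomposition}. Your Step 3 spells out a point the paper leaves implicit: transfer continuity through the $\ell^1$-valued map that the proof of Corollary~\ref{scholium:bound_m} actually controls. (Alternatively, Dini's theorem applied to the scalar statement gives continuity, and the triangle-inequality majorant gives the vanishing at infinity.)
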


\begin{proof}
	This follows coefficientwise from Corollary~\ref{scholium:bound_m} and Proposition~\ref{prop:beurling_multiplier_decomposition}.
\end{proof}


\section{Hilbertian estimates and a Besov refinement}
\label{section:hilbertian}

The endpoint cancellation established in the preceding section allows one to transfer control of the exact Hodge component to the coexact component. We first carry this out in the Hilbertian setting. The dyadic strengthening in Corollary~\ref{scholium:bound_S} then yields a refinement in the Besov scale.

We will use the following regularization in all arguments. Fix a radial function $\chi\in C_c^\infty(\mathbb R^n)$ which is equal to one in a neighborhood of the origin, and define
\begin{equation*}
	Q_N
	=\chi(2^{-N}D)-\chi(2^ND),
	\qquad
	N\geq1.
\end{equation*}
The operators $Q_N$ commute with $P$, $P^\perp$, $S$, and the dyadic blocks. They are uniformly bounded on $L^1$ and all the spaces considered below. Moreover, the symbol of $Q_N$ is supported in a compact annulus. Thus all computations may first be performed rigorously with $Q_N\omega$. The resulting estimates are uniform in $N$, and the general statements follow by letting $N\to\infty$. We will therefore carry out the proofs for forms with Fourier support in a compact annulus and omit further reference to this regularization.

\subsection{The Hilbertian estimate}

The first result is the dual estimate underlying the full Hilbertian case of the Bourgain--Brezis conjecture.

\begin{thm}[Hilbertian dual estimate]\label{thm:hilbertian_estimate}
	Let $n\geq2$ and $1\leq l\leq n-1$. Whenever $\omega\in L^1_0\Lambda^l$ and $P\omega\in\dot H^{-\frac n2}\Lambda^l$, one has $\omega\in\dot H^{-\frac n2}\Lambda^l$ and
	\begin{equation*}
		\|\omega\|_{\dot H^{-\frac n2}\Lambda^l}
		\lesssim
		\|\omega\|_{L^1_0\Lambda^l}
		+
		\|P\omega\|_{\dot H^{-\frac n2}\Lambda^l}.
	\end{equation*}
\end{thm}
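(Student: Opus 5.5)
We argue through the quadratic identity announced in the introduction. Because of the regularization by $Q_N$ described above, we may take $\omega$ to have Fourier support in a compact annulus. All the norms below are then finite, and the estimates we derive will be uniform in $N$. Since the symbol $S(\xi)$ is self-adjoint and acts by $\frac{n-l}{n}$ on $\operatorname{ran}P(\xi)$ and by $-\frac ln$ on $\operatorname{ran}P^\perp(\xi)$ (Proposition~\ref{prop:beurling_algebra}), Plancherel together with the orthogonality of $P$ and $P^\perp$ gives
\begin{equation*}
	\frac{n-l}{n}\|P\omega\|_{\dot H^{-\frac n2}\Lambda^l}^2
	-\frac ln\|P^\perp\omega\|_{\dot H^{-\frac n2}\Lambda^l}^2
	=\int_{\mathbb R^n}\big\langle\omega,|D|^{-n}S\omega\big\rangle dx .
\end{equation*}
To justify writing the right-hand side as an honest integral, we use Proposition~\ref{prop:Beurling_transform}: since $\omega\in L^1_0\Lambda^l$, the form $|D|^{-n}S\omega$ lies in $C_0\Lambda^l$. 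The pairing of the $L^1$ form $\omega$ with this bounded continuous form is therefore an absolutely convergent integral, and it agrees with the Fourier-side pairing. In the regularized setting this is immediate, because $\omega$ is then Schwartz modulo the annular cutoff.

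Next we rearrange the identity and estimate the right-hand side by Hölder's inequality and the endpoint bound of Proposition~\ref{prop:Beurling_transform}:
\begin{equation*}
	\frac ln\|P^\perp\omega\|_{\dot H^{-\frac n2}}^2
	\le\frac{n-l}{n}\|P\omega\|_{\dot H^{-\frac n2}}^2
	+\|\omega\|_{L^1}\big\||D|^{-n}S\omega\big\|_{L^\infty}
	\lesssim\|P\omega\|_{\dot H^{-\frac n2}}^2+\|\omega\|_{L^1}^2 .
\end{equation*}
Since $l\ge1$, the coefficient $\frac ln$ is positive, so this gives $\|P^\perp\omega\|_{\dot H^{-\frac n2}}\lesssim\|\omega\|_{L^1_0}+\|P\omega\|_{\dot H^{-\frac n2}}$. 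Adding the exact component and using $\omega=P\omega+P^\perp\omega$ yields the claimed inequality for $Q_N\omega$.

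The key observation is that the estimate is quadratic: the mixed term $\int\langle\omega,|D|^{-n}S\omega\rangle$ is controlled by $\|\omega\|_{L^1}^2$ alone. This is why no absorption argument is needed and no a priori finiteness of $\|P^\perp\omega\|$ is used beyond what the regularization already provides.

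The main obstacle is passing to the limit $N\to\infty$, that is, showing that $\omega\in\dot H^{-\frac n2}\Lambda^l$ in the first place. We plan to handle it as follows. First, $\|Q_N\omega\|_{L^1}\lesssim\|\omega\|_{L^1}$ uniformly in $N$, and $\|PQ_N\omega\|_{\dot H^{-\frac n2}}\le\|P\omega\|_{\dot H^{-\frac n2}}$ because the symbol of $Q_N$ is bounded by one. Hence $Q_N\omega$ is bounded in $\dot H^{-\frac n2}\Lambda^l$ uniformly in $N$. Second, $Q_N\omega\to\omega$ in $\mathscr S_0'$. Weak compactness of bounded sets in the Hilbert space $\dot H^{-\frac n2}\Lambda^l$ then gives $\omega\in\dot H^{-\frac n2}\Lambda^l$, and lower semicontinuity of the norm under weak convergence gives the estimate with the same constant. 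Finally, Proposition~\ref{prop:duality} converts this estimate into Theorem~\ref{thm:intro:hilbertian}.
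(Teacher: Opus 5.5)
Your proposal is correct and follows the paper's proof essentially verbatim: the Hilbertian identity for $|D|^{-n}S$, the endpoint bound $\||D|^{-n}S\omega\|_{L^\infty}\lesssim\|\omega\|_{L^1}$ controlling the cross term by $\|\omega\|_{L^1_0}^2$, and the orthogonality of $P\omega$ and $P^\perp\omega$ in $\dot H^{-\frac n2}$. Your weak-compactness passage to the limit $N\to\infty$ fills in the regularization step that the paper leaves implicit; there the bound on $\|PQ_N\omega\|_{\dot H^{-\frac n2}}$ holds up to a constant depending on $\chi$, not necessarily with constant one, which is harmless.
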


\begin{proof}
	Since $P$, $P^\perp$, $S$, and $|D|$ commute, and since the Hodge projections are self-adjoint, one has
	\begin{equation}\label{eq:hilbertian_beurling_identity}
		\frac{n-l}{n}\|P\omega\|_{\dot H^{-\frac n2}\Lambda^l}^2
		-\frac l n\|P^\perp\omega\|_{\dot H^{-\frac n2}\Lambda^l}^2
		=
		\int_{\mathbb R^n}
		\left\langle
		\omega,|D|^{-n}S\omega
		\right\rangle dx.
	\end{equation}
	Proposition~\ref{prop:Beurling_transform} gives
	\begin{equation*}
		\left|
		\int_{\mathbb R^n}
		\left\langle
		\omega,|D|^{-n}S\omega
		\right\rangle dx
		\right|
		\leq
		\|\omega\|_{L^1_0\Lambda^l}
		\big\||D|^{-n}S\omega\big\|_{L^\infty\Lambda^l}
		\lesssim
		\|\omega\|_{L^1_0\Lambda^l}^2.
	\end{equation*}
	It then follows from \eqref{eq:hilbertian_beurling_identity} that
	\begin{equation*}
		\|P^\perp\omega\|_{\dot H^{-\frac n2}\Lambda^l}^2
		\lesssim
		\|\omega\|_{L^1_0\Lambda^l}^2
		+
		\|P\omega\|_{\dot H^{-\frac n2}\Lambda^l}^2.
	\end{equation*}
	Finally, the exact and coexact components are orthogonal in
	$\dot H^{-\frac n2}\Lambda^l$, and hence
	\begin{equation*}
		\|\omega\|_{\dot H^{-\frac n2}\Lambda^l}^2
		=
		\|P\omega\|_{\dot H^{-\frac n2}\Lambda^l}^2
		+
		\|P^\perp\omega\|_{\dot H^{-\frac n2}\Lambda^l}^2.
	\end{equation*}
	The desired estimate follows.
\end{proof}

The duality principle now gives the Hilbertian Bourgain--Brezis theorem in all dimensions and form degrees.

\begin{cor}[Hilbertian Bourgain--Brezis estimate]\label{cor:hilbertian_solution}
	Let $n\geq2$ and $1\leq l\leq n-1$. For every $v\in\dot H^{\frac n2}\Lambda^l$, there exists $u\in(\dot H^{\frac n2}\cap L^\infty)\Lambda^l$ such that $du=dv$ and
	\begin{equation*}
		\|u\|_{(\dot H^{\frac n2}\cap L^\infty)\Lambda^l}
		\lesssim
		\|v\|_{\dot H^{\frac n2}\Lambda^l}.
	\end{equation*}
	In particular,
	\begin{equation*}
		d\big[\dot H^{\frac n2}\Lambda^l\big]
		=
		d\big[(\dot H^{\frac n2}\cap L^\infty)\Lambda^l\big]
	\end{equation*}
	with equivalent norms.
\end{cor}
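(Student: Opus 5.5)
This follows from the duality principle, Proposition~\ref{prop:duality}, with $Y=\dot H^{\frac n2}=\dot F^{\frac n2}_{2,2}$, so that $p=q=2$ and $X=Y'=\dot H^{-\frac n2}=\dot F^{-\frac n2}_{2,2}$. Since $1<p,q<\infty$, the proposition applies and assertions \ref{estimate:equiv1} through \ref{estimate:equiv4} are equivalent. Theorem~\ref{thm:hilbertian_estimate} is exactly assertion \ref{estimate:equiv3} in this setting. It states that for $\omega\in L^1_0\Lambda^l$ with $P\omega\in\dot H^{-\frac n2}\Lambda^l$, one has $\omega\in\dot H^{-\frac n2}\Lambda^l$ together with the stated bound. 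The equivalence \ref{estimate:equiv3}$\Leftrightarrow$\ref{estimate:equiv1} then gives, for every $v\in\dot H^{\frac n2}\Lambda^l$, some $u\in(\dot H^{\frac n2}\cap L^\infty)\Lambda^l$ with $du=dv$ and $\|u\|\lesssim\|v\|$. This is the first claim.

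Before invoking the theorem, I will check that it applies to general $\omega$ and not only to band-limited forms. Its proof was carried out for forms with compact annular Fourier support, using the regularization $Q_N$. For general $\omega\in L^1_0\Lambda^l$ with $P\omega\in\dot H^{-\frac n2}\Lambda^l$, the estimate holds for $Q_N\omega$ uniformly in $N$. This uses that $Q_N$ is uniformly bounded on $L^1$, preserves zero mean, and commutes with $P$. Hence $Q_N\omega$ is bounded in $\dot H^{-\frac n2}\Lambda^l$. Extracting a weak limit, and noting that $Q_N\omega\to\omega$ in $\mathscr S_0'$, shows $\omega\in\dot H^{-\frac n2}\Lambda^l$ with the same bound. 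I expect this limiting step to be the only point requiring care, and it is routine.

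For the identity of spaces, the inclusion $d[(\dot H^{\frac n2}\cap L^\infty)\Lambda^l]\subset d[\dot H^{\frac n2}\Lambda^l]$ is trivial, and $\|dv\|_{d[\dot H^{\frac n2}\Lambda^l]}\le\|dv\|_{d[(\dot H^{\frac n2}\cap L^\infty)\Lambda^l]}$ because the intersection norm dominates. Conversely, given $dv$ with $v\in\dot H^{\frac n2}\Lambda^l$, the selection $u$ from the first claim satisfies $du=dv$ and $\|u\|_{\dot H^{\frac n2}\cap L^\infty}\lesssim\|v\|_{\dot H^{\frac n2}}$. Taking the infimum over all primitives $v$ of the same datum gives the reverse inequality, so the induced norms are equivalent.
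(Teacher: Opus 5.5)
Your proposal is correct and matches the paper's proof: Theorem~\ref{thm:hilbertian_estimate} is assertion \ref{estimate:equiv3} of Proposition~\ref{prop:duality} with $X=\dot H^{-\frac n2}$ and $Y=\dot H^{\frac n2}$, and the selection statement is assertion \ref{estimate:equiv1}. Your additional remarks, on removing the annular regularization $Q_N$ by a weak limit and on deducing the equivalence of the induced norms, are sound details that the paper leaves implicit.
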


\begin{proof}
	Theorem~\ref{thm:hilbertian_estimate} is assertion \ref{estimate:equiv3} of Proposition~\ref{prop:duality} with
	\begin{equation*}
		X=\dot H^{-\frac n2},
		\qquad
		Y=\dot H^{\frac n2}.
	\end{equation*}
	The conclusion is assertion \ref{estimate:equiv1} of that proposition.
\end{proof}

\subsection{A dyadic Besov refinement}

The Hilbertian identity can be localized to each dyadic annulus. The $L^\infty(\ell^1)$ estimate from Corollary~\ref{scholium:bound_S} then
allows the localized bounds to be summed over all dyadic blocks.

\begin{thm}[Besov dual estimate]\label{thm:hilbertian_estimate_Besov}
	Let $n\geq2$, $1\leq l\leq n-1$, and $2\leq r\leq\infty$.
	Whenever $\omega\in L^1_0\Lambda^l$ and
	$P\omega\in\dot B^{-\frac n2}_{2,r}\Lambda^l$, one has
	$\omega\in\dot B^{-\frac n2}_{2,r}\Lambda^l$ and
	\begin{equation*}
		\|\omega\|_{\dot B^{-\frac n2}_{2,r}\Lambda^l}
		\lesssim
		\|\omega\|_{L^1_0\Lambda^l}
		+
		\|P\omega\|_{\dot B^{-\frac n2}_{2,r}\Lambda^l}.
	\end{equation*}
\end{thm}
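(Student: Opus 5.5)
We localize the Hilbertian identity \eqref{eq:hilbertian_beurling_identity} to each dyadic block. Working under the regularization (Fourier support of $\omega$ in a compact annulus), we use that $\Delta_j$ commutes with $P$, $P^\perp$, $S$ and $|D|$, and that $P$ and $P^\perp$ are orthogonal projections. This gives, for every $j\in\mathbb Z$,
\begin{equation*}
	\frac{n-l}{n}\big\|\Delta_jP\omega\big\|_{\dot H^{-\frac n2}\Lambda^l}^2
	-\frac ln\big\|\Delta_jP^\perp\omega\big\|_{\dot H^{-\frac n2}\Lambda^l}^2
	=
	\int_{\mathbb R^n}\big\langle\omega,\Delta_j^2|D|^{-n}S\omega\big\rangle dx .
\end{equation*}
Since $\|\Delta_j f\|_{\dot H^{-\frac n2}}\sim 2^{-jn/2}\|\Delta_j f\|_{L^2}$, the left side is, up to constants, the $j$-th Besov piece. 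Writing $a_j=2^{-jn}\|\Delta_jP^\perp\omega\|_{L^2}^2$, $b_j=2^{-jn}\|\Delta_jP\omega\|_{L^2}^2$ and $c_j=\int\langle\omega,\Delta_j^2|D|^{-n}S\omega\rangle\,dx$, we obtain $a_j\lesssim b_j+|c_j|$. (The comparison of norms on the annulus can be made exact by inserting the fattened blocks if necessary.)

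The key input is Corollary~\ref{scholium:bound_S} with $\Delta_j^2$ in place of $\Delta_j$. It yields
\begin{equation*}
	\sum_j|c_j|
	\le\int_{\mathbb R^n}|\omega|\sum_j\big|\Delta_j^2|D|^{-n}S\omega\big|\,dx
	\lesssim\|\omega\|_{L^1_0\Lambda^l}^2 .
\end{equation*}
Thus $(c_j)\in\ell^1$, hence $(c_j)\in\ell^{r/2}$ since $r/2\ge1$, with norm $\lesssim\|\omega\|_{L^1}^2$.

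Now $\|P^\perp\omega\|_{\dot B^{-\frac n2}_{2,r}}^2=\|(a_j)\|_{\ell^{r/2}}$, and the same holds for $P$ with $b_j$. Taking $\ell^{r/2}$ norms in $a_j\lesssim b_j+|c_j|$, using the triangle inequality in $\ell^{r/2}$ (valid because $r\ge2$), gives
\begin{equation*}
	\|P^\perp\omega\|_{\dot B^{-\frac n2}_{2,r}}^2\lesssim\|P\omega\|_{\dot B^{-\frac n2}_{2,r}}^2+\|\omega\|_{L^1_0}^2 .
\end{equation*}
Adding $P\omega$ back through $\omega=P\omega+P^\perp\omega$ gives the estimate. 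The bound is uniform in the regularization parameter $N$, so passing $N\to\infty$ (Fatou on the dyadic sums) yields $\omega\in\dot B^{-\frac n2}_{2,r}$. For $r=\infty$ the $\ell^1\subset\ell^\infty$ step is trivial.

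The main point, and the only nonroutine one, is that the error terms must be summable over $j$, not merely bounded for each $j$. Per-block bounds would only give $r=\infty$. The $L^\infty(\ell^1)$ bound of Corollary~\ref{scholium:bound_S} is precisely what supplies $\ell^1$ summability, and it is also what forces the restriction $r\ge2$.
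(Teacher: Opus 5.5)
Your proposal is correct and follows the paper's proof essentially verbatim. Both arguments localize the Hilbertian identity to each dyadic block, sum the error terms using the $L^\infty(\ell^1)$ bound of Corollary~\ref{scholium:bound_S} with $\Delta_j^2$, and then use $\ell^1\subset\ell^{r/2}$. The only cosmetic difference is that your argument covers $r=2$ and $r=\infty$ in the same computation, whereas the paper treats them separately via Theorem~\ref{thm:hilbertian_estimate} and the embedding $L^1\subset\dot B^{-\frac n2}_{2,\infty}$.
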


\begin{proof}
	Note first that the case $r=2$ corresponds to Theorem~\ref{thm:hilbertian_estimate}, while the endpoint case $r=\infty$ follows from the embedding
	\begin{equation*}
		L^1\subset \dot B^{-\frac n2}_{2,\infty}.
	\end{equation*}

	If $2<r<\infty$, set
	\begin{equation*}
		a_j
		=
		\|\Delta_jP^\perp\omega\|_{\dot H^{-\frac n2}\Lambda^l},
		\qquad
		b_j
		=
		\|\Delta_jP\omega\|_{\dot H^{-\frac n2}\Lambda^l},
	\end{equation*}
	and, by frequency localization, observe that
	\begin{equation*}
		\|P^\perp\omega\|_{\dot B^{-\frac n2}_{2,r}\Lambda^l}
		\sim
		\|(a_j)_{j\in\mathbb Z}\|_{\ell^r},
		\qquad
		\|P\omega\|_{\dot B^{-\frac n2}_{2,r}\Lambda^l}
		\sim
		\|(b_j)_{j\in\mathbb Z}\|_{\ell^r}.
	\end{equation*}
	We then apply \eqref{eq:hilbertian_beurling_identity} to $\Delta_j\omega$ to deduce
	\begin{equation*}
		\begin{aligned}
			\sum_{j\in\mathbb{Z}}\left|\frac{n-l}nb_j^2
			-\frac lna_j^2\right|^\frac r2
			=\sum_{j\in\mathbb{Z}}\left|
			\int_{\mathbb{R}^n}\langle \omega,\Delta_j^2|D|^{-n}S\omega \rangle dx\right|^\frac r2.
		\end{aligned}
	\end{equation*}
	Since $r/2> 1$, we obtain
	\begin{equation*}
		\|(a_j)_{j\in\mathbb Z}\|_{\ell^r}^2\lesssim \|(b_j)_{j\in\mathbb Z}\|_{\ell^r}^2
		+\sum_{j\in\mathbb{Z}}\left|
		\int_{\mathbb{R}^n}\langle \omega,\Delta_j^2|D|^{-n}S\omega \rangle dx\right|.
	\end{equation*}
	It then follows that
	\begin{equation*}
		\|(a_j)_{j\in\mathbb Z}\|_{\ell^r}^2\lesssim \|(b_j)_{j\in\mathbb Z}\|_{\ell^r}^2
		+\|\omega\|_{L^1_0\Lambda^l}
		\Big\|\sum_{j\in\mathbb{Z}}\big|\Delta_j^2 |D|^{-n}S\omega\big|\Big\|_{C_0},
	\end{equation*}
	which, by Corollary \ref{scholium:bound_S}, gives the desired estimate.
\end{proof}

By duality, the preceding theorem gives the following critical Besov Bourgain--Brezis estimates.

\begin{cor}[Hilbertian Besov Bourgain--Brezis estimates]\label{cor:hilbertian_Besov_solution}
	Let $n\geq2$, $1\leq l\leq n-1$, and $1\leq q\leq2$. For every
	$v\in\dot B^{\frac n2}_{2,q}\Lambda^l$, there exists
	$u\in(\dot B^{\frac n2}_{2,q}\cap L^\infty)\Lambda^l$ such that
	$du=dv$ and
	\begin{equation*}
		\|u\|_{(\dot B^{\frac n2}_{2,q}\cap L^\infty)\Lambda^l}
		\lesssim
		\|v\|_{\dot B^{\frac n2}_{2,q}\Lambda^l}.
	\end{equation*}
	In particular,
	\begin{equation*}
		d\big[\dot B^{\frac n2}_{2,q}\Lambda^l\big]
		=
		d\big[(\dot B^{\frac n2}_{2,q}\cap L^\infty)\Lambda^l\big]
	\end{equation*}
	with equivalent norms.
\end{cor}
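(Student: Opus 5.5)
The plan is to split according to whether $q=1$ or $1<q\leq2$. For $1<q\leq2$ we combine Theorem~\ref{thm:hilbertian_estimate_Besov} with Proposition~\ref{prop:duality}. For $q=1$ we use the endpoint embedding recalled in the remark following that proposition.

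\emph{Case $1<q\leq 2$.} Let $Y=\dot B^{\frac n2}_{2,q}$. Then $X=Y'=\dot B^{-\frac n2}_{2,q'}$, with $q'=r\in[2,\infty)$. Since $1<p=2<\infty$ and $1<q<\infty$, the duality principle applies.

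Theorem~\ref{thm:hilbertian_estimate_Besov} with $r=q'$ states exactly the following: whenever $\omega\in L^1_0\Lambda^l$ and $P\omega\in X\Lambda^l$, one has $\omega\in X\Lambda^l$ and
\begin{equation*}
\|\omega\|_{X\Lambda^l}\lesssim \|\omega\|_{L^1_0\Lambda^l}+\|P\omega\|_{X\Lambda^l}.
\end{equation*}
This is assertion \ref{estimate:equiv3} of Proposition~\ref{prop:duality}. The implication \ref{estimate:equiv3}$\Rightarrow$\ref{estimate:equiv1} then produces, for every $v\in Y\Lambda^l$, a form $u\in(Y\cap L^\infty)\Lambda^l$ with $du=dv$ and $\|u\|_{(Y\cap L^\infty)\Lambda^l}\lesssim\|v\|_{Y\Lambda^l}$.

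\emph{Case $q=1$.} Here the duality principle is not available, since $q'=\infty$ and $Y$ is not reflexive. It is also unnecessary. We take $u=v$ and use the embedding $\dot B^{\frac n2}_{2,1}\subset L^\infty$, which follows from Bernstein's inequality: $\|\Delta_jv\|_{L^\infty}\lesssim 2^{jn/2}\|\Delta_jv\|_{L^2}$, and summing over $j$ gives an absolutely convergent series bounded by $\|v\|_{\dot B^{\frac n2}_{2,1}}$. One subtlety is that the homogeneous class is defined modulo polynomials. The series $\sum_j\Delta_jv$ converges uniformly, so it selects a bounded representative, which is consistent with the convention that $L^\infty$ is taken modulo constants. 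Hence $\|v\|_{(\dot B^{\frac n2}_{2,1}\cap L^\infty)\Lambda^l}\lesssim\|v\|_{\dot B^{\frac n2}_{2,1}\Lambda^l}$.

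In both cases the equality of the spaces $d[\,\cdot\,]$ follows. The inclusion $\supseteq$ is trivial. The reverse inclusion, together with the equivalence of the induced norms, comes from the selection just constructed after taking the infimum over primitives $v$ of a given datum $dv$.

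The only point needing care is the correct identification of the dual space $X=\dot B^{-\frac n2}_{2,q'}$, and checking that the parameter range $r=q'\in[2,\infty)$ falls within the hypotheses of Theorem~\ref{thm:hilbertian_estimate_Besov}. Beyond that, no genuine obstacle is expected.
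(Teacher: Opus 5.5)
Your proposal is correct and follows the paper's proof: for $1<q\leq2$ you apply Theorem~\ref{thm:hilbertian_estimate_Besov} with $r=q'$ as assertion (C) of Proposition~\ref{prop:duality} to obtain (A), and for $q=1$ you take $u=v$ using $\dot B^{\frac n2}_{2,1}\subset L^\infty$. Your Bernstein-inequality justification of that embedding is an extra detail that the paper simply cites.
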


\begin{proof}
	If $1<q\leq2$, set $r=q'$. Theorem~\ref{thm:hilbertian_estimate_Besov} is assertion \ref{estimate:equiv3} of Proposition~\ref{prop:duality} with
	\begin{equation*}
		X=\dot B^{-\frac n2}_{2,r},
		\qquad
		Y=\dot B^{\frac n2}_{2,q}.
	\end{equation*}
	Assertion \ref{estimate:equiv1} gives the conclusion.
	
	If $q=1$, the result follows directly from
	\begin{equation*}
		\dot B^{\frac n2}_{2,1}\subset L^\infty,
	\end{equation*}
	by taking $u=v$.
\end{proof}

The preceding arguments are quadratic. They rely on the Hilbertian identity \eqref{eq:hilbertian_beurling_identity} and its dyadic localization. To move away from the exponent $2$, we replace these identities by multilinear expansions. This is the purpose of the next section.


\section{Multilinear reflection estimates}\label{section:multilinear}

The estimates of the preceding section are quadratic. We now replace the Hilbertian pairing by multilinear expansions. The reflection symmetry of the scalar components of $S$ survives in these expansions and gives the non-Hilbertian estimates stated in the introduction.

\subsection{A multiscale reflection estimate}

We begin with the scalar estimate used in both the Besov and Triebel--Lizorkin arguments.

\begin{lem}[Multilinear reflection cancellation]\label{lemma:multilinear_reflection}
	Let $m\in C^\infty(\mathbb R^n\setminus\{0\})$, with $n\geq 1$, be homogeneous of degree zero and suppose that the reflection property \eqref{eq:reflection_oddness} holds, for some $\sigma\in\mathbb S^{n-1}$.
	Let $M\geq2$ and $K\geq1$ be integers, and set
	\begin{equation*}
		L=MK-1.
	\end{equation*}
	Then, for all $f,g\in L^1(\mathbb R^n)$,
	\begin{equation}\label{estimate:multilinear}
		\sum_{j_1\geq\cdots\geq j_K}2^{-\frac{nL}{K}(j_1+\cdots+j_K)}
		\bigg|
		\int_{\mathbb R^n}
		\big(\Delta_{j_1}m(D)f\big)
		(\Delta_{j_1}g)^{M-1}
		\prod_{k=2}^K(\Delta_{j_k}g)^M dx
		\bigg|
		\lesssim
		\|f\|_{L^1}\|g\|_{L^1}^{L}.
	\end{equation}
	When $K=1$, the product over $k=2,\ldots,K$ is understood to be empty.
\end{lem}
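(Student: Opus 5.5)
The plan is to write the left-hand side of \eqref{estimate:multilinear} as a multilinear integral of $f\otimes g^{\otimes L}$ against a kernel that inherits the reflection oddness of $m$. The goal is then a bound, uniform in the points, for the sum over scales of that kernel, in the spirit of \eqref{bound:basic}. The essential point is that no absolute value may be placed on the inner $x$-integral before the oddness of $m$ has been used. Pulling out $\|\cdot\|_{L^\infty}$ norms of some factors and then integrating the rest in absolute value destroys the cancellation.

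\emph{Step 1: kernel representation.} Let $\mathcal K=\mathscr F^{-1}(\varphi\, m)$ and $\psi=\mathscr F^{-1}\varphi$. Then $\Delta_jm(D)f=\int 2^{jn}\mathcal K(2^j(x-y_0))f(y_0)\,dy_0$ and $\Delta_jg=\int 2^{jn}\psi(2^j(x-y))g(y)\,dy$. As in the proof of Lemma~\ref{lemma:bound:oddness}, $\mathcal K$ is Schwartz with $\mathcal K\circ\rho_\sigma=-\mathcal K$, while $\psi$ is radial. Label the $L$ copies of $g$ by points $y_1,\dots,y_L$, with $M-1$ of them at scale $j_1$ and $M$ at each scale $j_k$, $k\geq2$. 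By Fubini, the integral inside the absolute value becomes $\int f(y_0)\prod_i g(y_i)\,I_{\mathbf j}(y_0,\dots,y_L)\,dy$. Substituting $w=2^{j_1}(x-y_0)$, the kernel is
\begin{equation*}
	I_{\mathbf j}
	=
	2^{n(j_1M+\cdots+j_KM)}2^{-nj_1}\,J_{\mathbf j}(z),
	\qquad
	J_{\mathbf j}(z)=\int\mathcal K(w)\prod_i\psi\big(2^{j(i)-j_1}w+z_i\big)\,dw,
\end{equation*}
where $j(i)$ is the scale attached to $y_i$ and $z_i=2^{j(i)}(y_0-y_i)$. Multiplying by the weight $2^{-\frac{nL}K(j_1+\cdots+j_K)}$ and using $M-\frac LK=\frac1K$, the total scalar factor is $2^{\frac nK\sum_{k\geq2}(j_k-j_1)}\,2^{nj_1(1-\frac{K-1}K)}\cdots$; more precisely, each coarser block carries an extra factor $2^{-n(j_1-j_k)/K}$ relative to the $K=1$ normalization. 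It therefore suffices to prove
\begin{equation*}
	\sup_{y_0,\dots,y_L}\;\sum_{j_1\geq\cdots\geq j_K}2^{-\frac nK\sum_{k\geq2}(j_1-j_k)}\,\big|J_{\mathbf j}(z)\big|\lesssim1.
\end{equation*}

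\emph{Step 2: cancellation at the diagonal.} When $z=0$, that is, when all $y_i=y_0$, every factor $\psi(2^{j(i)-j_1}w)$ is radial in $w$. Hence the integrand is odd under $\rho_\sigma$ and $J_{\mathbf j}(0)=0$. Since $2^{j(i)-j_1}\leq1$, the derivatives of $J_{\mathbf j}$ in $z$ are bounded uniformly in $\mathbf j$, and the rapid decay of $\mathcal K$ and $\psi$ makes $J_{\mathbf j}$ decay in the fine variables. This should give
\begin{equation*}
	|J_{\mathbf j}(z)|\lesssim\min\{1,|z|\}\,(1+|z^{(1)}|)^{-N},
\end{equation*}
where $z^{(1)}$ collects the scale-$j_1$ variables. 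This is the multilinear analogue of $|\mathcal K(x)|\lesssim|x|(1+|x|)^{-N}$. Using $|z_i|\leq 2^{j_1}|y_0-y_i|$ for all $i$, including the coarse ones, the bound reads $\min\{1,2^{j_1}\delta\}$ times decay in $2^{j_1}\max_{i:\,j(i)=j_1}|y_0-y_i|$, where $\delta=\max_i|y_0-y_i|$.

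\emph{Step 3: summation.} For fixed $j_1$, the sum over $j_1\geq j_2\geq\cdots\geq j_K$ of $2^{-\frac nK\sum_{k\geq2}(j_1-j_k)}$ is a convergent geometric sum, bounded by a constant. What remains is $\sum_{j_1}\min\{1,2^{j_1}\delta\}\,(1+2^{j_1}\delta')^{-N}$, which is the sum in \eqref{bound:basic}, provided the decay can be tied to the same quantity $\delta$ that appears in the Lipschitz bound. This is the main obstacle. The difficulty is that the Lipschitz gain involves all points, while the decay of $J$ only involves the scale-$j_1$ points. The coarse points $y_i$ with $j(i)<j_1$ can be far from $y_0$ without producing decay at scale $2^{j_1}$.

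To handle this, I would split the Lipschitz bound. The fine variables give $\min\{1,2^{j_1}\delta'\}(1+2^{j_1}\delta')^{-N}$, which is summable in $j_1$ exactly as in \eqref{bound:basic}. Each coarse variable contributes $\min\{1,2^{j_1}|y_0-y_i|\}$. I would replace this by the smaller quantity $\min\{1,2^{j_k}|y_0-y_i|\}$ together with decay in $2^{j_k}|y_0-y_i|$, obtained by differentiating only the factors at scale $j_k$; these carry an explicit $2^{j_k-j_1}$ from the chain rule. The loss $2^{j_1-j_k}$ from the chain rule is then absorbed by the geometric weight $2^{-n(j_1-j_k)/K}$ when $n/K\geq1$. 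If this absorption fails, I would instead sum the scale $j_k$ freely using \eqref{bound:basic}-type bounds in $2^{j_k}|y_0-y_i|$. Once the supremum is bounded, integrating against $|f(y_0)|\prod_i|g(y_i)|$ gives $\|f\|_{L^1}\|g\|_{L^1}^L$.
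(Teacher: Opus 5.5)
Your Step~1 is correct, and your $J_{\mathbf j}$ is exactly the paper's kernel $\widetilde{\mathcal K}$: its Fourier transform in $z$ is $\prod_s\varphi(\eta_s)\,\varphi(\chi)\,m(-\chi)$ with $\chi=\sum_s 2^{j(s)-j_1}\eta_s$. The overall architecture (vanishing at the diagonal, a Lipschitz-times-decay bound, scale summation with geometric weights) is also the paper's. The gap is in Steps~2--3.

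You only assert decay of $J_{\mathbf j}$ in the fine variables. As you observe yourself, this is not enough. If the fine points coincide with $y_0$, the bound $\min\{1,|z|\}(1+|z^{(1)}|)^{-N}$ leaves, at fixed gaps, a sum $\sum_j\min\{1,2^j|y_0-y_i|\}$ over the coarse scale, and this diverges. Your repair does not close this, for three reasons.
\begin{itemize}
\item Decay in $2^{j_k}|y_0-y_i|$ cannot come from differentiating. A mean-value bound along the segment from $0$ to $z$ involves $\nabla J_{\mathbf j}$ near the origin, which does not decay in $z_i$.
\item There is no chain-rule loss to absorb. In the variable $z_i=2^{j_k}(y_0-y_i)$, the derivative of $\psi(2^{j_k-j_1}w+z_i)$ is simply $\nabla\psi$, so the Lipschitz gain is already $|z_i|=2^{j_k}|y_0-y_i|$.
\item The proposed condition $n/K\geq1$ would exclude cases the paper needs. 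Examples are $n=2$, $K=N=r/t=3$ (i.e.\ $p=\frac65$, $q=2$) in Theorem~\ref{thm:estimate_TL_multilinear}, and $n=1$, $K\geq2$ in Corollary~\ref{cor:toy_multilinear}.
\end{itemize}
Your fallback presupposes exactly the coarse-variable decay that has not been established.

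The missing observation is that $J_{\mathbf j}$ decays rapidly in \emph{all} $L$ variables, uniformly in $\mathbf j$, precisely because $2^{j(i)-j_1}\leq1$. By Peetre's inequality, $|\psi(2^{j(i)-j_1}w+z_i)|\lesssim(1+|w|)^{N}(1+|z_i|)^{-N}$, and the factor $(1+|w|)^N$ is absorbed by the rapid decay of $\mathcal K(w)$. Equivalently, the Fourier symbol above is bounded in $C^\infty_c((\mathbb R^n)^L)$ uniformly in $\mathbf j$, so $J_{\mathbf j}$ is bounded in $\mathscr S((\mathbb R^n)^L)$.

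Now use $J_{\mathbf j}(0)=0$ for $|z|\leq1$ and this decay for $|z|\geq1$. This gives $|J_{\mathbf j}(z)|\lesssim|z|(1+|z|)^{-Q}\leq\sum_i h_Q(|z_i|)$, with $h_Q(t)=t(1+t)^{-Q}$, since $|z|\leq\sum_i|z_i|$ and $(1+|z|)^{-Q}\leq(1+|z_i|)^{-Q}$ for each $i$.

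The $i$th term depends on $\mathbf j$ only through $j(i)$. Parametrize $\mathbf j$ by $j(i)$ and the gaps $d_k=j_1-j_k$. Then sum over $j(i)\in\mathbb Z$ using \eqref{bound:basic}, and over the $d_k$ geometrically using the weight $2^{-\frac nK\sum_k d_k}$. This works for all $n\geq1$ and $K\geq1$, with no comparison between $n$ and $K$ and no chain-rule bookkeeping. It is exactly the paper's argument.
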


\begin{proof}
	Let
	\begin{equation*}
		G_1=\{1,\ldots,M-1\},
	\end{equation*}
	and, for $2\leq k\leq K$, let
	\begin{equation*}
		G_k=\{M(k-1),\ldots,Mk-1\}.
	\end{equation*}
	Thus the sets $G_k$ partition $\{1,\ldots,L\}$. We write $\kappa(s)=k$ when $s\in G_k$.
	
	We first argue for Schwartz functions. A direct Fourier calculation gives
	\begin{equation*}
		\begin{aligned}
			&2^{-\frac{nL}{K}(j_1+\cdots+j_K)}
			\int_{\mathbb R^n}
			\big(\Delta_{j_1}m(D)f\big)
			(\Delta_{j_1}g)^{M-1}
			\prod_{k=2}^K(\Delta_{j_k}g)^M dx
			\\
			&\qquad=
			\int_{\mathbb R^n}f(x)
			\left(\int_{(\mathbb R^n)^L}
			\mathcal K(x-y_1,\ldots,x-y_L)
			\prod_{s=1}^L g(y_s)
			dy_1\cdots dy_L\right)dx.
		\end{aligned}
	\end{equation*}
	Here, we rewrite the kernel $\mathcal K$ as
	\begin{equation}\label{eq:multiscale_kernel_form}
		\mathcal K(z_1,\ldots,z_L)=
		\left(
		\prod_{k=2}^K2^{-\frac nK d_k}
		\right)
		\widetilde{\mathcal K}
		\big(
		2^{j_{\kappa(1)}}z_1,\ldots,
		2^{j_{\kappa(L)}}z_L
		\big),
	\end{equation}
	with
	\begin{equation*}
		d_1=0,
		\qquad
		d_k=j_1-j_k
		\quad(2\leq k\leq K).
	\end{equation*}
	Since $j_1\geq\cdots\geq j_K$, one has $0=d_1\leq d_2\leq\cdots\leq d_K$.
	
	After rescaling the Fourier variables indexed in $G_k$ by $2^{j_k}$ and using the homogeneity of $m$, a computation shows that the kernel $\widetilde{\mathcal K}$ is the inverse Fourier transform on $(\mathbb R^n)^L$ of the symbol
	\begin{equation*}
		\left(\prod_{s=1}^L\varphi(\eta_s)\right)
		\varphi(\chi)m(-\chi),
	\end{equation*}
	where $\eta_s\in\mathbb R^n$ is the Fourier variable dual to $z_s$ and
	\begin{equation*}
		\chi=\sum_{s=1}^L2^{-d_{\kappa(s)}}\eta_s.
	\end{equation*}
	Notice that the family of these symbols is bounded in $C_c^\infty\big((\mathbb R^n)^L\big)$, uniformly in $0=d_1\leq\cdots\leq d_K$. The cutoff $\varphi(\chi)$ keeps the argument of $m$ away from the origin. Consequently, the family $\widetilde{\mathcal K}$ is uniformly bounded in $\mathscr S\big((\mathbb R^n)^L\big)$.
	
	The reflection hypothesis also gives
	\begin{equation*}
		\widetilde{\mathcal K}(\rho_\sigma z_1,\ldots,\rho_\sigma z_L)
		=-\widetilde{\mathcal K}(z_1,\ldots,z_L).
	\end{equation*}
	Therefore, arguing as in Lemma~\ref{lemma:bound:oddness}, we obtain, for every sufficiently large $Q$,
	\begin{equation}\label{eq:multiscale_kernel_odd_bound}
		\big|\widetilde{\mathcal K}(z_1,\ldots,z_L)\big|
		\lesssim
		\frac{|(z_1,\ldots,z_L)|}
		{(1+|(z_1,\ldots,z_L)|)^Q},
	\end{equation}
	uniformly in $j_1\geq\cdots\geq j_K$. If
	\begin{equation*}
		h_Q(t)=\frac{t}{(1+t)^Q},
	\end{equation*}
	then the right-hand side of \eqref{eq:multiscale_kernel_odd_bound} is bounded by
	\begin{equation*}
		\sum_{s=1}^Lh_Q(|z_s|).
	\end{equation*}
	Combining this with \eqref{eq:multiscale_kernel_form} and then using
	\begin{equation*}
		\sup_{z\in\mathbb R^n} \sum_{j\in\mathbb Z}h_Q(2^j|z|)
		\lesssim1,
	\end{equation*}
	we find that
	\begin{equation}\label{eq:multiscale_kernel_sum}
		\begin{aligned}
			\sum_{j_1\geq\cdots\geq j_K}
			\big|\mathcal K(z_1,\ldots,z_L)\big|
			&\lesssim
			\sum_{s=1}^L\sum_{j_1\geq\cdots\geq j_K}
			\left(
			\prod_{k=2}^K2^{-\frac nK d_k}
			\right)
			h_Q(2^{j_{\kappa(s)}}|z_s|)
			\\
			&\lesssim
			\sum_{s=1}^L
			\left(
			\prod_{k=2}^K\sum_{d_k\geq 0}2^{-\frac nK d_k}
			\right)
			\sum_{j\in\mathbb{Z}}h_Q(2^{j}|z_s|)
			\lesssim1,
		\end{aligned}
	\end{equation}
	uniformly over all $z_1,\ldots,z_L\in\mathbb R^n$.
	
	The estimate \eqref{estimate:multilinear} now follows from \eqref{eq:multiscale_kernel_sum} and Fubini's theorem. Finally, the general case follows by approximation in $L^1$.
\end{proof}

The case $K=1$ of Lemma~\ref{lemma:multilinear_reflection} is the diagonal multilinear estimate
\begin{equation*}
	\sum_{j\in\mathbb Z}2^{-jn(M-1)}
	\bigg|
	\int_{\mathbb R^n}
	\big(\Delta_jm(D)f\big)(\Delta_jg)^{M-1}dx
	\bigg|
	\lesssim
	\|f\|_{L^1}\|g\|_{L^1}^{M-1}.
\end{equation*}
This is the form used in the Besov argument. The full multiscale estimate is needed in the Triebel--Lizorkin setting.

\subsection{Besov estimates}

We first apply the diagonal estimate. The evenness assumption below permits us to express the $L^r$ norm of each dyadic block by a multilinear pairing.

Here, for convenience, we introduce the regularity parameter
\begin{equation}\label{notation:regularity}
	s_r=-n\Big(1-\frac1r\Big),
\end{equation}
for any $1\leq r<\infty$, and $s_\infty=-n$.

\begin{thm}[Multilinear Besov estimate]\label{thm:estimate_Besov_multilinear}
	Let $n\geq2$, $1\leq l\leq n-1$, and
	\begin{equation*}
		2\leq r\leq t\leq\infty.
	\end{equation*}
	Assume that $r$ is an even integer. Whenever $\omega\in L^1_0\Lambda^l$ and $P\omega\in\dot B^{s_r}_{r,t}\Lambda^l$, one has $\omega\in\dot B^{s_r}_{r,t}\Lambda^l$ and
	\begin{equation*}
		\|\omega\|_{\dot B^{s_r}_{r,t}\Lambda^l}
		\lesssim
		\|\omega\|_{L^1_0\Lambda^l}
		+
		\|P\omega\|_{\dot B^{s_r}_{r,t}\Lambda^l}.
	\end{equation*}
\end{thm}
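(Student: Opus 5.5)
We follow the dyadic Hilbertian Besov argument of Theorem~\ref{thm:hilbertian_estimate_Besov}, with the quadratic identity replaced by an $r$-linear expansion made possible by the evenness of $r$. Throughout, $\omega$ has Fourier support in a compact annulus, as justified by the regularization $Q_N$.

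The endpoint $t=\infty$ is immediate. Indeed, $L^1\subset\dot B^{s_r}_{r,\infty}$ by Bernstein, since $\|\Delta_j\omega\|_{L^r}\lesssim 2^{jn(1-\frac1r)}\|\omega\|_{L^1}$. We therefore assume $r\le t<\infty$ and write $M=r$.

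\emph{Step 1: a scalar $M$-linear identity for each block.} For each $j$, set
\begin{equation*}
	A_j=2^{js_r M}\|\Delta_jP^\perp\omega\|_{L^M\Lambda^l}^M,
	\qquad
	B_j=2^{js_rM}\|\Delta_jP\omega\|_{L^M\Lambda^l}^M .
\end{equation*}
We must show
\begin{equation*}
	\|(A_j^{1/M})\|_{\ell^t}\lesssim\|\omega\|_{L^1}+\|(B_j^{1/M})\|_{\ell^t}.
\end{equation*}
The plan is to expand $A_j$ coefficientwise. Since $M$ is even,
\begin{equation*}
	A_j=2^{js_rM}\sum_I\int(\Delta_jP^\perp\omega)_I^{M}\,dx .
\end{equation*}
Write $\Delta_jP^\perp\omega=\Delta_j\omega-\Delta_jP\omega$. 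By \eqref{eq:recover_identity_from_S}, $P^\perp=\operatorname{Id}-P=\frac{n-l}{l}P-\frac nl S$, so we may alternatively write
\begin{equation*}
	\Delta_jP^\perp\omega=\tfrac{n-l}{l}\Delta_jP\omega-\tfrac nl\Delta_jS\omega .
\end{equation*}
Use this for one factor and $\Delta_jP^\perp\omega=\Delta_j\omega-\Delta_jP\omega$ for the remaining $M-1$ factors, then expand. Every term containing at least one factor $\Delta_jP\omega$ is bounded by Hölder and Young in the form
\begin{equation*}
	B_j^{a/M}\bigl(A_j+B_j+2^{js_rM}\|\Delta_j\omega\|_{L^M}^M\bigr)^{(M-a)/M},
	\qquad a\ge1 .
\end{equation*}
Since $\|\Delta_j\omega\|\le\|\Delta_jP\omega\|+\|\Delta_jP^\perp\omega\|$, Young's inequality lets these terms be absorbed as
\begin{equation*}
	\epsilon A_j+C_\epsilon B_j .
\end{equation*}
The only surviving term is
\begin{equation*}
	-\tfrac nl\,2^{js_rM}\int(\Delta_jS\omega)_I(\Delta_j\omega)_I^{M-1}dx .
\end{equation*}

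\emph{Step 2: reflection cancellation.} By Proposition~\ref{prop:beurling_multiplier_decomposition}, $(\Delta_jS\omega)_I$ is a finite sum of terms $\Delta_j m(D)\omega_J$ with $m$ reflection-odd. Writing $(\Delta_j\omega)_I^{M-1}=(\Delta_j\omega_I)^{M-1}$, each piece has the form of the $K=1$ case of Lemma~\ref{lemma:multilinear_reflection}, with $f=\omega_J$ and $g=\omega_I$. Note that $2^{js_rM}=2^{-jn(M-1)}$, so the scaling matches. The lemma gives
\begin{equation*}
	\sum_j\Bigl|2^{js_rM}\int(\Delta_jS\omega)_I(\Delta_j\omega)_I^{M-1}\Bigr|\lesssim\|\omega\|_{L^1}^M .
\end{equation*}
This is the key input. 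Strictly, the lemma involves a single function raised to the power $M-1$, and here that is exactly what occurs, since we expand coefficientwise.

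\emph{Step 3: summation in $\ell^{t/M}$.} Steps 1 and 2 give
\begin{equation*}
	A_j\lesssim B_j+c_j,
	\qquad
	\sum_j c_j\lesssim\|\omega\|_{L^1}^M .
\end{equation*}
Since $t/M\ge1$, we have $\|(c_j)\|_{\ell^{t/M}}\le\|(c_j)\|_{\ell^1}$. Taking $\ell^{t/M}$ norms then gives
\begin{equation*}
	\|(A_j^{1/M})\|_{\ell^t}^M\lesssim\|(B_j^{1/M})\|_{\ell^t}^M+\|\omega\|_{L^1}^M .
\end{equation*}
Finally, $\|\omega\|\le\|P\omega\|+\|P^\perp\omega\|$ in $\dot B^{s_r}_{r,t}$ yields the claim, and the regularization limit removes the annulus assumption.

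The main obstacle is Step 1: the absorption must be arranged carefully so that only the single $S$-term survives. The alternative is to expand $(P^\perp\omega)^M$ with $P^\perp=\operatorname{Id}-P$ on all factors, but then terms like $\int(\Delta_j\omega_I)^M$ are not cancellative. Trading exactly one factor for $S$ via \eqref{eq:recover_identity_from_S} avoids this. A second delicate point is the $\epsilon$-absorption, which requires $A_j$ to be finite; this holds under the annulus regularization.
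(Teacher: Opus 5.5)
Your proof is correct and uses the same ingredients as the paper: the identity \eqref{eq:recover_identity_from_S} applied to one factor, the evenness of $r$, the $K=1$ case of Lemma~\ref{lemma:multilinear_reflection}, and the embedding $\ell^1\subset\ell^{t/r}$. The paper is more direct: it applies $\Delta_j\omega_I=\frac nl\big(\Delta_j(P\omega)_I-\Delta_j(S\omega)_I\big)$ to $\omega_I$ itself and multiplies by $(\Delta_j\omega_I)^{r-1}$, so the cancellative term appears at once without a binomial expansion, and one Young inequality on the full $\dot B^{s_r}_{r,t}$ norms replaces your blockwise $\epsilon$-absorption. That absorption implicitly uses the $L^M$-boundedness of $S$, and it needs only $A_j<\infty$, which already follows from Bernstein.
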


\begin{proof}
	In view of the embedding $L^1\subset\dot B^{s_r}_{r,\infty}$, the case $t=\infty$ is trivial. We may thus assume that $t<\infty$.
	Moreover, using the annular regularization from Section~\ref{section:hilbertian},
	it is enough to justify the estimate for a form with compact Fourier
	support away from the origin.
	
	Fix a component $I$ and set
	\begin{equation*}
		u=\omega_I,
		\qquad
		v=(P\omega)_I.
	\end{equation*}
	From \eqref{eq:recover_identity_from_S},
	\begin{equation*}
		\Delta_ju
		=
		\frac nl\big(
		\Delta_jv-\Delta_j(S\omega)_I
		\big).
	\end{equation*}
	Since $r$ is even, multiplication by $(\Delta_ju)^{r-1}$ and
	integration give
	\begin{equation*}
		\|\Delta_ju\|_{L^r}^r
		\lesssim
		\left|
		\int_{\mathbb R^n}
		\Delta_j(S\omega)_I(\Delta_ju)^{r-1}dx
		\right|
		+
		\|\Delta_jv\|_{L^r}\|\Delta_ju\|_{L^r}^{r-1}.
	\end{equation*}
	
	After multiplying by $2^{-jn(r-1)}$, taking the $\ell^\frac tr$ norm, and using H\"older's inequality in the sequence variable, we obtain
	\begin{equation*}
		\|u\|_{\dot B_{r,t}^{s_r}}^r
		\lesssim
		\sum_{j\in\mathbb{Z}}2^{-jn(r-1)}
		\left|\int_{\mathbb{R}^n}\Delta_j(S\omega)_I(\Delta_ju)^{r-1}dx\right|
		+\|v\|_{\dot B_{r,t}^{s_r}}\|u\|_{\dot B_{r,t}^{s_r}}^{r-1},
	\end{equation*}
	where we used that $\ell^1\subset\ell^\frac tr$, for $t\geq r$.
	
	Now, recall from Proposition~\ref{prop:beurling_multiplier_decomposition} that every coefficient of $S\omega$ is a finite linear combination of $m(D)\omega_J$, where $m$ satisfies the hypotheses of Lemma~\ref{lemma:multilinear_reflection}. Applying that lemma with $M=r$ and $K=1$ yields
	\begin{equation*}
		\|u\|_{\dot B^{s_r}_{r,t}}^r
		\lesssim
		\|\omega\|_{L^1_0\Lambda^l}^r
		+
		\|v\|_{\dot B^{s_r}_{r,t}}
		\|u\|_{\dot B^{s_r}_{r,t}}^{r-1}.
	\end{equation*}
	Finally, Young's inequality and summation over the finitely many components of $\omega$ prove the result.
\end{proof}

The duality principle gives the corresponding critical result.

\begin{cor}[Besov Bourgain--Brezis estimates]\label{cor:Besov_solution}
	Let $n\geq2$, $1\leq l\leq n-1$, $1<p\leq2$, and
	$1\leq q\leq p$. Assume that $p'$ is an even integer. Then, for every
	$v\in\dot B^{\frac np}_{p,q}\Lambda^l$, there exists
	$u\in(\dot B^{\frac np}_{p,q}\cap L^\infty)\Lambda^l$ such that
	$du=dv$ and
	\begin{equation*}
		\|u\|_{(\dot B^{\frac np}_{p,q}\cap L^\infty)\Lambda^l}
		\lesssim
		\|v\|_{\dot B^{\frac np}_{p,q}\Lambda^l}.
	\end{equation*}
	In particular,
	\begin{equation*}
		d\big[\dot B^{\frac np}_{p,q}\Lambda^l\big]
		=
		d\big[(\dot B^{\frac np}_{p,q}\cap L^\infty)\Lambda^l\big]
	\end{equation*}
	with equivalent norms.
\end{cor}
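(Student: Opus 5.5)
The plan is to mirror the proof of Corollary~\ref{cor:hilbertian_Besov_solution}, splitting into the cases $q>1$ and $q=1$.

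\emph{Case $1<q\leq p$.} Set $r=p'$ and $t=q'$. Since $1<p\leq2$, we have $r\geq2$. Since $q\leq p$, we have $t=q'\geq p'=r$, so $2\leq r\leq t<\infty$. By hypothesis $r$ is an even integer. Moreover, the regularity parameter from \eqref{notation:regularity} becomes
\begin{equation*}
	s_r=-n\Big(1-\frac1{p'}\Big)=-\frac np,
\end{equation*}
so that
\begin{equation*}
	\dot B^{s_r}_{r,t}=\dot B^{-\frac np}_{p',q'}=X,
\end{equation*}
which is the dual of $Y=\dot B^{\frac np}_{p,q}$. Theorem~\ref{thm:estimate_Besov_multilinear} with these parameters is then exactly assertion \ref{estimate:equiv3} of Proposition~\ref{prop:duality} for this pair $(X,Y)$. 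The proposition applies because $1<p,q<\infty$. Assertion \ref{estimate:equiv1} then yields the selection $u$ with $du=dv$ and the stated bound, and the equality of the two image spaces with equivalent induced norms follows at once.

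\emph{Case $q=1$.} Here duality is not needed. The critical embedding $\dot B^{\frac np}_{p,1}\subset L^\infty$ lets us take $u=v$, exactly as in the remark following Proposition~\ref{prop:duality}.

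The only point requiring care is the parameter bookkeeping, namely checking that $q\leq p$ gives $t\geq r$ and that $s_r$ matches $-\frac np$. Beyond that the statement is a direct combination of earlier results, so there is no genuine obstacle.
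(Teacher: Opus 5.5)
Your proposal is correct and matches the paper's proof: for $1<q\leq p$ you apply Theorem~\ref{thm:estimate_Besov_multilinear} with $r=p'$, $t=q'$ together with Proposition~\ref{prop:duality}, and for $q=1$ you take $u=v$ via the embedding $\dot B^{\frac np}_{p,1}\subset L^\infty$. Your parameter checks, namely $2\leq r\leq t<\infty$ and $s_r=-\frac np$, are also right.
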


\begin{proof}
	If $1<q\leq p$, apply Proposition~\ref{prop:duality} and
	Theorem~\ref{thm:estimate_Besov_multilinear} with
	\begin{equation*}
		r=p',
		\qquad
		t=q'.
	\end{equation*}
	If $q=1$, the conclusion follows directly from the critical embedding
	\begin{equation*}
		\dot B^{\frac np}_{p,1}\subset L^\infty
	\end{equation*}
	by taking $u=v$.
\end{proof}

This proves Theorem~\ref{thm:intro:besov}.

\subsection{Triebel--Lizorkin estimates}

In the Triebel--Lizorkin scale, the outer spatial power must also be expanded.
This accounts for the second arithmetic condition in the theorem below.

\begin{thm}[Multilinear Triebel--Lizorkin estimate]\label{thm:estimate_TL_multilinear}
	Let $n\geq2$, $1\leq l\leq n-1$, and
	\begin{equation*}
		2\leq t\leq r<\infty.
	\end{equation*}
	Assume that $t$ is an even integer and that
	\begin{equation*}
		N=\frac rt\in\mathbb N.
	\end{equation*}
	Whenever $\omega\in L^1_0\Lambda^l$ and $P\omega\in\dot F^{s_r}_{r,t}\Lambda^l$, one has $\omega\in\dot F^{s_r}_{r,t}\Lambda^l$ and
	\begin{equation*}
		\|\omega\|_{\dot F^{s_r}_{r,t}\Lambda^l}
		\lesssim
		\|\omega\|_{L^1_0\Lambda^l}
		+
		\|P\omega\|_{\dot F^{s_r}_{r,t}\Lambda^l}.
	\end{equation*}
\end{thm}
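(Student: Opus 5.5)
We argue as in Theorem~\ref{thm:estimate_Besov_multilinear}, now expanding both the inner $\ell^t$ power and the outer $L^r$ power, and then appealing to the multiscale case $K=N$ of Lemma~\ref{lemma:multilinear_reflection}. After the annular regularization of Section~\ref{section:hilbertian}, all quantities are finite. We fix a component $I$ and set $u=\omega_I$, $v=(P\omega)_I$, and $w=(S\omega)_I$. By \eqref{eq:recover_identity_from_S}, $\Delta_ju=\frac nl(\Delta_jv-\Delta_jw)$.

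Write $G=\sum_j2^{js_rt}(\Delta_ju)^t$, which is nonnegative because $t$ is even. Then
\begin{equation*}
	\|u\|_{\dot F^{s_r}_{r,t}}^r=\int_{\mathbb R^n}G^N\,dx .
\end{equation*}
In one factor $G$ we substitute one copy of $\Delta_ju$ by $\frac nl(\Delta_jv-\Delta_jw)$. This gives
\begin{equation*}
	\int G^N\,dx=\frac nl\int\sum_j2^{js_rt}\Delta_jv\,(\Delta_ju)^{t-1}G^{N-1}\,dx-\frac nl\int\sum_j2^{js_rt}\Delta_jw\,(\Delta_ju)^{t-1}G^{N-1}\,dx .
\end{equation*}
For the first term we use H\"older's inequality in $\ell^t$ and then in $L^r$. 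Pointwise, $\sum_j2^{js_rt}|\Delta_jv||\Delta_ju|^{t-1}\le(\sum_j 2^{js_rt}|\Delta_jv|^t)^{1/t}G^{(t-1)/t}$. The first term is therefore bounded by $\|v\|_{\dot F^{s_r}_{r,t}}\|u\|_{\dot F^{s_r}_{r,t}}^{r-1}$, since $G^{N-1/t}=G^{(r-1)/t}$.

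For the second term we expand $G^{N-1}$ as a sum over $(N-1)$-tuples of scales. We symmetrize by ordering the $N$ scales $j_1,\dots,j_N$ (the distinguished one included), keeping track of multiplicities and of which factor carries $w$. In each resulting term all $N$ scales carry the exponent $t$, except that one scale contains $\Delta w$ together with $t-1$ copies of $\Delta u$. The weight is $2^{s_rt(j_1+\cdots+j_N)}$, and $s_rt=-n(t-r/r\cdot 1)\cdot\ldots$; more precisely $s_rt=-\frac{n(r-1)}{r}t=-\frac{n(Nt-1)}{N}$. This is exactly the weight $2^{-\frac{nL}{K}\sum j_k}$ of Lemma~\ref{lemma:multilinear_reflection} with $M=t$, $K=N$, and $L=Nt-1$.

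The lemma as stated places the $m(D)f$ factor at the largest scale $j_1$. For a term where $w$ sits at scale $j_k$ we have two options. We can apply the same kernel argument with the $G_k$ bookkeeping relabelled: the proof only uses the oddness of the full symbol together with summability of $2^{-\frac nK d_k}$ over the other scales, so the position of the distinguished scale is irrelevant. Alternatively, we state and prove this trivial variant of the lemma. Ties among the $j_k$ are absorbed by reducing $K$ and regrouping, or by allowing $\geq$ as in the lemma. Finally, $w$ is a finite combination of $m(D)\omega_J$ with reflection-odd $m$ (Proposition~\ref{prop:beurling_multiplier_decomposition}), and $g=u=\omega_I$. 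The second term is therefore $\lesssim\|\omega\|_{L^1}^{r}$, since $L+1=Nt=r$.

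Putting the two bounds together yields
\begin{equation*}
	\|u\|_{\dot F^{s_r}_{r,t}}^r\lesssim\|\omega\|_{L^1_0\Lambda^l}^r+\|v\|_{\dot F^{s_r}_{r,t}}\|u\|_{\dot F^{s_r}_{r,t}}^{r-1}.
\end{equation*}
Young's inequality and summation over $I$ conclude, with the limit $N\to\infty$ in the regularization handled as before. We expect the main obstacle to be the combinatorial step. It requires verifying that, after expanding $G^{N-1}$, the combined factor structure with weights fits Lemma~\ref{lemma:multilinear_reflection} uniformly, including the position of the $m(D)$ factor and the coincident scales. If the relabelling claim proves delicate, the fallback is to reprove the kernel bound \eqref{eq:multiscale_kernel_sum} directly for an arbitrary distinguished group.
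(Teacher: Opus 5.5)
You handle the $v$-term, the weight identity $s_rt=-\frac{n(Nt-1)}{N}$, and the final Young step correctly, but the $w$-term has a genuine gap. You substitute $\Delta_ju=\frac nl(\Delta_jv-\Delta_jw)$ in one factor of $G$ \emph{before} ordering the scales. As a result, the scale $j_*$ carrying $\Delta_{j_*}w$ is arbitrary relative to the other $N-1$ scales, and you need Lemma~\ref{lemma:multilinear_reflection} with $m(D)f$ placed at a non-maximal scale.

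Your claim that this follows by relabelling the $G_k$ is not correct. The proof of the lemma uses $d_k=j_1-j_k\geq0$ in two essential places.
\begin{enumerate}
\item After rescaling, the cutoff becomes $\varphi(\chi)m(-\chi)$ with $\chi=\sum_s2^{-d_{\kappa(s)}}\eta_s$. The family of symbols is bounded in $C_c^\infty$ only because all coefficients satisfy $2^{-d_k}\leq1$. This is what yields the uniform bound $|\widetilde{\mathcal K}(z)|\lesssim|z|(1+|z|)^{-Q}$.
\item The prefactor $\prod_{k\geq2}2^{-\frac nKd_k}$ is summable only over $d_k\geq0$.
\end{enumerate}
If some $j_k>j_*$, the same bookkeeping produces the growing factor $2^{\frac nK(j_k-j_*)}$ and symbols with coefficients $2^{j_k-j_*}>1$, so both steps break. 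In this high--high--to--low configuration, any decay must come from the output constraint $\varphi(2^{-j_*}\sum_s\eta_s)$, which confines the top-scale frequencies to a thin set. Turning that into a kernel bound that is uniform, and still summable over the overall scale (where oddness is needed), is a new estimate your proposal does not supply. So the ``trivial variant'' is the whole difficulty, and your fallback only names it.

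The missing idea is to order before substituting. Since $t$ is even, every term $\prod_k(\Delta_{j_k}u)^t$ in the expansion of $\int G^N\,dx$ is pointwise nonnegative. Hence $\int G^N\,dx$ is comparable, within a factor $N!$, to the sum over $j_1\geq\cdots\geq j_N$ alone. Now substitute \eqref{eq:recover_identity_from_S} in a copy of $\Delta_{j_1}u$ at the \emph{largest} scale. This puts the $S$-factor exactly where Lemma~\ref{lemma:multilinear_reflection} (with $M=t$, $K=N$) requires it, and ties are included since the lemma allows $\geq$. The $P$-term is then bounded by the full unordered sum with absolute values, and your H\"older argument applies unchanged. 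This is how the paper proceeds.
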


\begin{proof}
	As before, we use the annular regularization from
	Section~\ref{section:hilbertian} to reduce the proof to a form with compact Fourier support away from the origin.
	
	Fix a component $I$ and write
	\begin{equation*}
		u=\omega_I,
		\qquad
		v=(P\omega)_I.
	\end{equation*}
	Since $t$ is even, the function
	\begin{equation*}
		A(x)
		=
		\sum_{j\in\mathbb Z}
		2^{js_rt}(\Delta_ju(x))^t
	\end{equation*}
	is nonnegative, and
	\begin{equation*}
		\|u\|_{\dot F^{s_r}_{r,t}}^r
		=
		\int_{\mathbb R^n}A(x)^Ndx.
	\end{equation*}
	Expanding the integer power and ordering the indices gives
	\begin{equation*}
		\begin{aligned}
		\|u\|_{\dot F^{s_r}_{r,t}}^r
		\sim
		\sum_{j_1\geq\cdots\geq j_N}
		2^{s_rt(j_1+\cdots+j_N)}
		\int_{\mathbb R^n}
		\prod_{k=1}^N(\Delta_{j_k}u)^t dx.
		\end{aligned}
	\end{equation*}
	
	Then, using \eqref{eq:recover_identity_from_S} in one copy of $\Delta_{j_1}u$, we obtain
	\begin{equation}\label{eq:TL_split}
		\|u\|_{\dot F^{s_r}_{r,t}}^r
		\lesssim
		\mathcal E_I+\mathcal P_I,
	\end{equation}
	where
	\begin{equation*}
		\mathcal E_I
		=
		\sum_{j_1\geq\cdots\geq j_N}
		2^{s_rt(j_1+\cdots+j_N)}
		\bigg|
		\int_{\mathbb R^n}
		\Delta_{j_1}(S\omega)_I
		(\Delta_{j_1}u)^{t-1}
		\prod_{k=2}^N(\Delta_{j_k}u)^t dx
		\bigg|
	\end{equation*}
	and
	\begin{equation*}
		\mathcal P_I
		=
		\sum_{j_1,\ldots, j_N}
		2^{s_rt(j_1+\cdots+j_N)}
		\bigg|
		\int_{\mathbb R^n}
		\Delta_{j_1}v
		(\Delta_{j_1}u)^{t-1}
		\prod_{k=2}^N(\Delta_{j_k}u)^t dx
		\bigg|.
	\end{equation*}
	Let
	\begin{equation*}
		F_u(x)
		=
		\bigg(
		\sum_j2^{js_rt}|\Delta_ju(x)|^t
		\bigg)^{1/t}
	\end{equation*}
	and define $F_v$ analogously. Using H\"older's inequality in the sequence variable, we find that
	\begin{equation*}
		\begin{aligned}
		\mathcal P_I
		&\leq
		\int_{\mathbb R^n}
		\bigg(
		\sum_j2^{js_rt}
		|\Delta_jv||\Delta_ju|^{t-1}
		\bigg)
		F_u^{r-t}dx
		\\
		&\leq
		\int_{\mathbb R^n}F_vF_u^{r-1}dx
		\\
		&\leq
		\|v\|_{\dot F^{s_r}_{r,t}}
		\|u\|_{\dot F^{s_r}_{r,t}}^{r-1}.
		\end{aligned}
	\end{equation*}
	
	Now, writing $L=r-1=tN-1$ and noting that
	\begin{equation*}
		s_rt
		=-\frac{nL}{N},
	\end{equation*}
	we apply Lemma~\ref{lemma:multilinear_reflection} with $M=t$ and
	$K=N$ to the scalar multiplier components of $S$ to deduce that
	\begin{equation*}
		\mathcal E_I
		\lesssim
		\|\omega\|_{L^1_0\Lambda^l}^r.
	\end{equation*}
	
	Finally, substitution of the preceding estimates into \eqref{eq:TL_split} yields
	\begin{equation*}
		\|u\|_{\dot F^{s_r}_{r,t}}^r
		\lesssim
		\|\omega\|_{L^1_0\Lambda^l}^r
		+
		\|v\|_{\dot F^{s_r}_{r,t}}
		\|u\|_{\dot F^{s_r}_{r,t}}^{r-1}.
	\end{equation*}
	Young's inequality and summation over the components complete the proof.
\end{proof}

Combining the theorem with the duality principle gives the main
Triebel--Lizorkin result.

\begin{cor}[Triebel--Lizorkin Bourgain--Brezis estimates]\label{cor:TL:main}
	Let $n\geq2$, $1\leq l\leq n-1$, and $1<p\leq q\leq2$. Assume that $q'$ is an even integer and that
	\begin{equation*}
		\frac{p'}{q'}\in\mathbb N.
	\end{equation*}
	Then, for every $v\in\dot F^{\frac np}_{p,q}\Lambda^l$, there exists $u\in(\dot F^{\frac np}_{p,q}\cap L^\infty)\Lambda^l$ such that $du=dv$ and
	\begin{equation*}
		\|u\|_{(\dot F^{\frac np}_{p,q}\cap L^\infty)\Lambda^l}
		\lesssim
		\|v\|_{\dot F^{\frac np}_{p,q}\Lambda^l}.
	\end{equation*}
	In particular,
	\begin{equation*}
		d\big[\dot F^{\frac np}_{p,q}\Lambda^l\big]
		=
		d\big[(\dot F^{\frac np}_{p,q}\cap L^\infty)\Lambda^l\big]
	\end{equation*}
	with equivalent norms.
\end{cor}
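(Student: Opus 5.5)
The corollary follows by duality, in the same way as Corollaries~\ref{cor:hilbertian_solution} and~\ref{cor:Besov_solution}. We apply Proposition~\ref{prop:duality} with
\begin{equation*}
	Y=\dot F^{\frac np}_{p,q},
	\qquad
	X=Y'=\dot F^{-\frac np}_{p',q'}.
\end{equation*}
This is legitimate because $1<p\leq q\leq 2$ gives $1<p,q<\infty$, so the duality principle applies and $Y$ is reflexive.

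It therefore suffices to verify assertion \ref{estimate:equiv3} for this $X$, and we obtain it from Theorem~\ref{thm:estimate_TL_multilinear} with
\begin{equation*}
	r=p',
	\qquad
	t=q'.
\end{equation*}
The hypotheses of that theorem are checked as follows.
\begin{itemize}
	\item Since $p\leq q\leq 2$, we have $2\leq q'\leq p'<\infty$, that is, $2\leq t\leq r<\infty$.
	\item The exponent $t=q'$ is an even integer by assumption.
	\item The ratio $N=r/t=p'/q'$ is a natural number by assumption.
	\item The regularity index matches: $s_r=-n(1-\frac1{p'})=-\frac np$, so $\dot F^{s_r}_{r,t}=\dot F^{-\frac np}_{p',q'}=X$.
\end{itemize}
Theorem~\ref{thm:estimate_TL_multilinear} thus says exactly that, whenever $\omega\in L^1_0\Lambda^l$ and $P\omega\in X\Lambda^l$, one has $\omega\in X\Lambda^l$ together with the bound in \ref{estimate:equiv3}.

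The equivalence \ref{estimate:equiv3}$\Rightarrow$\ref{estimate:equiv1} in Proposition~\ref{prop:duality} then produces, for each $v\in Y\Lambda^l$, a form $u\in(Y\cap L^\infty)\Lambda^l$ with $du=dv$ and $\|u\|_{(Y\cap L^\infty)\Lambda^l}\lesssim\|v\|_{Y\Lambda^l}$. This is the selection statement.

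For the equality of the two image spaces, one inclusion is trivial. For the other, take any primitive $v$ of an element of $d[Y\Lambda^l]$ and apply the selection above: this gives
\begin{equation*}
	\|dv\|_{d[(Y\cap L^\infty)\Lambda^l]}\lesssim\|v\|_{Y\Lambda^l}.
\end{equation*}
Taking the infimum over all primitives $v$ yields the equivalence of the induced norms.

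The only point needing care is the parameter bookkeeping, in particular confirming that $s_r$ coincides with $-\frac np$. The analytic content is entirely contained in Theorem~\ref{thm:estimate_TL_multilinear}. Finally, specializing to $q=2$ and using $\dot F^{\frac np}_{p,2}=\dot W^{\frac np,p}$ gives the Sobolev exponents $p=\frac{2k}{2k-1}$, which recovers Theorem~\ref{thm:2}.
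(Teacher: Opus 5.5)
Your proposal is correct and follows the paper's own argument: apply Theorem~\ref{thm:estimate_TL_multilinear} with $r=p'$, $t=q'$ (so that $s_r=-\frac np$) to obtain assertion \ref{estimate:equiv3}, and then recover the selection \ref{estimate:equiv1} from Proposition~\ref{prop:duality}. Your check of the parameter ranges and your infimum argument for the equivalence of the induced norms supply details that the paper leaves implicit.
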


\begin{proof}
	Apply Theorem~\ref{thm:estimate_TL_multilinear} with
	\begin{equation*}
		r=p',
		\qquad
		t=q'.
	\end{equation*}
	The Bourgain--Brezis selection estimate then follows by duality directly from Proposition~\ref{prop:duality}.
\end{proof}

This proves Theorem~\ref{thm:2}. In particular, taking $q=2$ gives the
Sobolev Bourgain--Brezis estimate \eqref{conjecture:1} corresponding to
\begin{equation*}
	p=\frac{2k}{2k-1},
	\qquad
	k=1,2,\ldots
\end{equation*}
for every dimension and every form degree.


\section{Consequences for Hodge decompositions}
\label{section:hodge_consequences}

The Hodge projections are not bounded on $L^1$. The Hardy space $\mathscr H^1$, on which the Riesz transforms are bounded, provides the classical endpoint substitute. The estimates proved above give a different replacement. They control Hodge components in critical negative-smoothness spaces under assumptions that allow genuinely integrable data outside $\mathscr H^1$.

\subsection{Hodge estimates beyond the Hardy space}

We first record the most general forms of the dual Triebel--Lizorkin estimates. This provides us with Hodge estimates for integrable data outside the Hardy space $\mathscr H^1$.

For $1\leq r\leq \infty$, we recall that the regularity parameter $s_r$ is defined in \eqref{notation:regularity}.

\begin{cor}[Triebel--Lizorkin Hodge estimates]\label{cor:hodge_sum_TL}
	Let $n\geq2$, $1\leq l\leq n-1$, and $1<r,t<\infty$. Assume that either
	\begin{equation*}
		n<r'+l,
	\end{equation*}
	or that $t$ is an even integer and $r/t\in\mathbb N$. Whenever
	\begin{equation*}
		\omega\in
		\big(L^1_0+\dot F^{s_r}_{r,t}\big)\Lambda^l
		\qquad\text{and}\qquad
		P\omega\in\dot F^{s_r}_{r,t}\Lambda^l,
	\end{equation*}
	one has $\omega\in\dot F^{s_r}_{r,t}\Lambda^l$ and
	\begin{equation*}
		\|\omega\|_{\dot F^{s_r}_{r,t}\Lambda^l}
		\lesssim
		\|\omega\|_{(L^1_0+\dot F^{s_r}_{r,t})\Lambda^l}
		+
		\|P\omega\|_{\dot F^{s_r}_{r,t}\Lambda^l}.
	\end{equation*}
\end{cor}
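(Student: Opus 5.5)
The corollary is assertion \ref{estimate:equiv2} of Proposition~\ref{prop:duality} with $X=\dot F^{s_r}_{r,t}$ and $Y=\dot F^{\frac n{r'}}_{r',t'}$. Note that $s_r=-\frac n{r'}$, so $X=Y'$ as required, and $1<r',t'<\infty$, so the proposition applies. The plan is therefore to verify one of the equivalent assertions in each of the two cases, and then pass to \ref{estimate:equiv2} through the equivalences \ref{estimate:equiv1}$\Rightarrow$\ref{estimate:equiv2} or \ref{estimate:equiv3}$\Rightarrow$\ref{estimate:equiv2}, already proved there.

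In the second case, $t$ is an even integer and $r/t\in\mathbb N$. Since $r/t\in\mathbb N$ forces $t\le r$, we have $2\le t\le r<\infty$. Theorem~\ref{thm:estimate_TL_multilinear} then gives exactly assertion \ref{estimate:equiv3} for $X=\dot F^{s_r}_{r,t}$, and Proposition~\ref{prop:duality} upgrades this to \ref{estimate:equiv2}, which is the claimed estimate for $\omega\in(L^1_0+\dot F^{s_r}_{r,t})\Lambda^l$.

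In the first case, $n<r'+l$. Set $p=r'$ and $q=t'$, so that $1<p,q<\infty$ and $n<p+l$. Theorem~\ref{thm:1} then yields the Bourgain--Brezis identity $d[\dot F^{\frac np}_{p,q}\Lambda^l]=d[(\dot F^{\frac np}_{p,q}\cap L^\infty)\Lambda^l]$ with equivalent induced norms. The one point needing care is that \ref{estimate:equiv1} requires a selection estimate $\|u\|_{Y\cap L^\infty}\lesssim\|v\|_{Y}$, whereas Theorem~\ref{thm:1} as stated only gives the equivalence of quotient norms $\inf_{du=dv}\|u\|$. These two formulations agree up to constants. Given $v\in Y\Lambda^l$, note that $\|dv\|_{d[Y\Lambda^l]}\le\|v\|_{Y\Lambda^l}$. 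Norm equivalence then gives $\|dv\|_{d[(Y\cap L^\infty)\Lambda^l]}\lesssim\|v\|_{Y\Lambda^l}$. By definition of the infimum, there is $u$ with $du=dv$ and $\|u\|_{(Y\cap L^\infty)\Lambda^l}\le 2\|dv\|_{d[(Y\cap L^\infty)\Lambda^l]}$, which is exactly \ref{estimate:equiv1}. Proposition~\ref{prop:duality} then gives \ref{estimate:equiv2}.

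No step is a genuine obstacle; the argument is a bookkeeping exercise. The only points to check are the exponent matching $s_r=-n/r'$, the implicit inequality $t\le r$ in the second case, and the translation of the norm-equivalence statement of Theorem~\ref{thm:1} into the selection form \ref{estimate:equiv1}, as done above.
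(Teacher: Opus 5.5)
Your proposal is correct and matches the paper's proof. In the first range you apply Theorem~\ref{thm:1} with exponents $r',t'$ to obtain \ref{estimate:equiv1}, in the second you use Theorem~\ref{thm:estimate_TL_multilinear} to obtain \ref{estimate:equiv3}, and Proposition~\ref{prop:duality} converts either into \ref{estimate:equiv2}; your explicit passage from the quotient-norm equivalence of Theorem~\ref{thm:1} to the selection estimate (taking $u=0$ when $dv=0$) just spells out a step the paper leaves implicit.
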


\begin{proof}
	In the first range, apply Theorem~\ref{thm:1} with the conjugate exponents $r'$ and $t'$, and then use assertion \ref{estimate:equiv2} of Proposition~\ref{prop:duality}. In the second range, the same conclusion follows from Theorem~\ref{thm:estimate_TL_multilinear} and Proposition~\ref{prop:duality}.
\end{proof}

There is also a Besov counterpart to the preceding Hodge estimates.

\begin{cor}[Besov Hodge estimates]
\label{cor:hodge_sum_Besov}
	Let $n\geq2$, $1\leq l\leq n-1$, and
	\begin{equation*}
		2\leq r\leq t\leq\infty.
	\end{equation*}
	Assume that $r$ is an even integer. Whenever
	\begin{equation*}
		\omega\in
		\big(L^1_0+\dot B^{s_r}_{r,t}\big)\Lambda^l
		\qquad\text{and}\qquad
		P\omega\in\dot B^{s_r}_{r,t}\Lambda^l,
	\end{equation*}
	one has $\omega\in\dot B^{s_r}_{r,t}\Lambda^l$ and
	\begin{equation*}
		\|\omega\|_{\dot B^{s_r}_{r,t}\Lambda^l}
		\lesssim
		\|\omega\|_{(L^1_0+\dot B^{s_r}_{r,t})\Lambda^l}
		+
		\|P\omega\|_{\dot B^{s_r}_{r,t}\Lambda^l}.
	\end{equation*}
\end{cor}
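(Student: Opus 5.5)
The plan is to mirror the proof of Corollary~\ref{cor:hodge_sum_TL}: the statement is assertion \ref{estimate:equiv2} of Proposition~\ref{prop:duality} for a suitable space $X$, and Proposition~\ref{prop:duality} already shows that \ref{estimate:equiv3} implies \ref{estimate:equiv2}. Since the sum space here is written as $L^1_0+\dot B^{s_r}_{r,t}$, which is the same as $X+L^1_0$, it suffices to produce the $L^1_0$ estimate \ref{estimate:equiv3}. We split according to whether $t$ is finite.

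\emph{Case $t<\infty$.} Put $X=\dot B^{s_r}_{r,t}$. Since $2\leq r\leq t<\infty$, the conjugate exponents $p=r'$ and $q=t'$ lie in $(1,2]$, and $s_r=-n/p$. Hence $X=\big(\dot B^{n/p}_{p,q}\big)'$ is exactly the space to which Proposition~\ref{prop:duality} applies. Theorem~\ref{thm:estimate_Besov_multilinear} is precisely assertion \ref{estimate:equiv3} for this $X$. The implication \ref{estimate:equiv3}$\Rightarrow$\ref{estimate:equiv2} then gives the claim. That implication is only the short decomposition argument $\omega=\omega_0+\omega_1$, together with the boundedness of $P$ on $X$, which holds because $1<r<\infty$.

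\emph{Case $t=\infty$.} Here $q=1$, so Proposition~\ref{prop:duality} is not available as stated. I would not need it, because the \ref{estimate:equiv3}$\Rightarrow$\ref{estimate:equiv2} step uses no duality. Take a decomposition $\omega=\omega_0+\omega_1$ with $\omega_0\in X\Lambda^l$ and $\omega_1\in L^1_0\Lambda^l$. The embedding $L^1\subset\dot B^{s_r}_{r,\infty}$ gives
\[
\|\omega\|_X\leq\|\omega_0\|_X+\|\omega_1\|_X\lesssim\|\omega_0\|_X+\|\omega_1\|_{L^1}.
\]
Taking the infimum over decompositions bounds $\|\omega\|_X$ by the sum norm, so the estimate holds even without the $P\omega$ term.

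The main point to check is the $t<\infty$ case: the hypotheses of Theorem~\ref{thm:estimate_Besov_multilinear} (namely $r$ even and $r\leq t$) match those stated, and $P$ is bounded on $\dot B^{s_r}_{r,t}$ for $1<r<\infty$ and $t\leq\infty$. Both are recorded in Section~\ref{subsec:hodge_projections}, so I expect no real obstacle.
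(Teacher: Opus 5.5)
Your proposal is correct and follows essentially the same route as the paper. For $t=\infty$ you use the embedding $L^1\subset\dot B^{s_r}_{r,\infty}$, and for $t<\infty$ Theorem~\ref{thm:estimate_Besov_multilinear} supplies assertion \ref{estimate:equiv3} of Proposition~\ref{prop:duality}, which the decomposition argument upgrades to assertion \ref{estimate:equiv2}; your added observation that this last step uses no duality is accurate.
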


\begin{proof}
	If $t=\infty$, the estimate is a direct consequence of the embedding $L^1\subset \dot B^{s_r}_{r,\infty}$. If $t<\infty$, the conclusion follows from Theorem~\ref{thm:estimate_Besov_multilinear} and assertion \ref{estimate:equiv2} of Proposition~\ref{prop:duality}.
\end{proof}

These statements should be compared with the Hardy-space Hodge
decomposition. The inclusions
\begin{equation*}
	\mathscr H^1=\dot F^0_{1,2}
	\subset
	L^1\cap \dot F^{s_r}_{r,t}\subset L^1 + \dot F^{s_r}_{r,t}
\end{equation*}
hold throughout the critical scale. We refer to \cite{jawerth:1977,triebel:1983} for details on such critical embeddings. It is thus noteworthy that Corollaries~\ref{cor:hodge_sum_TL} and~\ref{cor:hodge_sum_Besov} do not require the
integrable part of the data to have integrable Riesz transforms.

\subsection{Hodge--Sobolev estimates}

The preceding results yield estimates directly in terms of $d\omega$ and
$d^*\omega$. The following is formulated in the spirit of Corollary~17 in \cite{bourgain:brezis:2007}, and the somewhat weaker results from \cite{lanzani:stein:2005}. It also extends some of the results from \cite{vanschaftingen:2010,vanschaftingen:2013} on Hodge--Sobolev embeddings.

\begin{cor}[Triebel--Lizorkin Hodge--Sobolev estimate]\label{cor:hodge_sobolev_TL}
	Let $n\geq4$, $2\leq l\leq n-2$, and $1<r,t<\infty$. Assume that either
	\begin{equation*}
		\max\{l,n-l\}<r'-1,
	\end{equation*}
	or that $t$ is an even integer and $r/t\in\mathbb N$. If $\omega\in\mathscr S_0'\Lambda^l$ satisfies
	\begin{equation*}
		d\omega\in
		\big(L^1_0+\dot F^{s_r}_{r,t}\big)\Lambda^{l+1},
		\qquad
		d^*\omega\in
		\big(L^1_0+\dot F^{s_r}_{r,t}\big)\Lambda^{l-1},
	\end{equation*}
	then $\omega\in\dot F^{s_r+1}_{r,t}\Lambda^l$ and
	\begin{equation*}
		\|\omega\|_{\dot F^{s_r+1}_{r,t}\Lambda^l}
		\lesssim
		\|d\omega\|_{(L^1_0+\dot F^{s_r}_{r,t})\Lambda^{l+1}}
		+
		\|d^*\omega\|_{(L^1_0+\dot F^{s_r}_{r,t})\Lambda^{l-1}}.
	\end{equation*}
\end{cor}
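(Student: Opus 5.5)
Since the Hodge components of $\omega$ are recovered from $d\omega$ and $d^*\omega$, the plan is to apply Corollary~\ref{cor:hodge_sum_TL} twice: once to the $(l+1)$-form $d\omega$ and once to the $(l-1)$-form $d^*\omega$. The assumptions $n\geq4$ and $2\leq l\leq n-2$ ensure that both degrees $l\pm1$ lie in $[1,n-1]$, which is the range where that corollary applies.

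\textbf{Step 1: the exact form $\alpha=d\omega\in\Lambda^{l+1}$.} Since $\alpha$ is exact, $P^\perp\alpha=0$ and $P\alpha=\alpha$ on $\Lambda^{l+1}$. This is the wrong projection for a direct application of the corollary, so we pass through the Hodge star. The form $\star\alpha\in\Lambda^{n-l-1}$ is coexact, and we apply Corollary~\ref{cor:hodge_sum_TL} in degree $n-l-1\in[1,n-2]$ with $P(\star\alpha)=0$. Hodge star is an isometry coefficientwise, so $\|\star\alpha\|_{L^1_0+\dot F^{s_r}_{r,t}}=\|\alpha\|_{L^1_0+\dot F^{s_r}_{r,t}}$. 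The corollary then gives
\begin{equation*}
\|d\omega\|_{\dot F^{s_r}_{r,t}}\lesssim\|d\omega\|_{(L^1_0+\dot F^{s_r}_{r,t})\Lambda^{l+1}}.
\end{equation*}
The condition $n<r'+(n-l-1)$ is equivalent to $l<r'-1$, which is implied by the hypothesis $\max\{l,n-l\}<r'-1$. The arithmetic alternative ($t$ even, $r/t\in\mathbb N$) is independent of the degree.

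\textbf{Step 2: the coexact form $\beta=d^*\omega\in\Lambda^{l-1}$.} Here $P\beta=0$ directly, since $d^*\beta=0$. We apply Corollary~\ref{cor:hodge_sum_TL} in degree $l-1\geq1$. The first condition reads $n<r'+l-1$, i.e.\ $n-l<r'-1$, which again follows from the hypothesis. This yields $\|d^*\omega\|_{\dot F^{s_r}_{r,t}}\lesssim\|d^*\omega\|_{(L^1_0+\dot F^{s_r}_{r,t})\Lambda^{l-1}}$. The two applications account for both halves of the $\max$ in the hypothesis.

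\textbf{Step 3: reconstruct $\omega$.} From $\omega=|D|^{-2}(dd^*+d^*d)\omega$ we get $|D|\omega=R(d^*\omega)+R^*(d\omega)$, where $R=d/|D|$ and $R^*=d^*/|D|$ are matrices of Riesz transforms. These are bounded on $\dot F^{s_r}_{r,t}$, so
\begin{equation*}
\|\omega\|_{\dot F^{s_r+1}_{r,t}}\lesssim\|d\omega\|_{\dot F^{s_r}_{r,t}}+\|d^*\omega\|_{\dot F^{s_r}_{r,t}}.
\end{equation*}
Combining this with Steps 1 and 2 gives the claimed estimate.

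\textbf{Main obstacle.} The delicate point is justifying the identity in Step 3 in $\mathscr S_0'$. We need that $\omega$ is determined modulo polynomials by $d\omega$ and $d^*\omega$, which holds because the symbols are invertible away from $\xi=0$. We also need that the a priori information from Steps 1–2 makes $|D|^{-1}$ of the relevant combination a genuine element of $\dot F^{s_r+1}_{r,t}$. This is a standard argument: pair against $\mathscr S_0$ test forms and use $\mathscr S_0$-duality. The membership $\alpha\in\dot F^{s_r}_{r,t}$ is itself part of the conclusion of the corollary, so no a priori regularity of $\omega$ is required.
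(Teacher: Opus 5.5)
Your proposal is correct and follows the paper's proof essentially step for step. You apply Corollary~\ref{cor:hodge_sum_TL} to the coexact forms $d^*\omega$ (degree $l-1$) and $\star d\omega$ (degree $n-l-1$), with the same degree conditions $r'>n-l+1$ and $r'>l+1$, and then recover $\omega$ through $P\omega=d|D|^{-2}(d^*\omega)$ and $P^\perp\omega=d^*|D|^{-2}(d\omega)$, which is exactly your identity $|D|\omega=R(d^*\omega)+R^*(d\omega)$.
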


\begin{proof}
	The form $d^*\omega$ is coexact. Corollary~\ref{cor:hodge_sum_TL}, applied in degree $l-1$, gives
	\begin{equation*}
		\|d^*\omega\|_{\dot F^{s_r}_{r,t}\Lambda^{l-1}}
		\lesssim
		\|d^*\omega\|_{(L^1_0+\dot F^{s_r}_{r,t})\Lambda^{l-1}}.
	\end{equation*}
	Likewise, $\star d\omega$ is coexact of degree $n-l-1$, and hence
	\begin{equation*}
		\|d\omega\|_{\dot F^{s_r}_{r,t}\Lambda^{l+1}}
		\lesssim
		\|d\omega\|_{(L^1_0+\dot F^{s_r}_{r,t})\Lambda^{l+1}}.
	\end{equation*}
	In the first parameter range, the two applications are permitted by $r'>n-l+1$ and $r'>l+1$, respectively. The second range is independent of the form degree.
	
	Finally, since
	\begin{equation*}
		P\omega=d|D|^{-2}(d^*\omega),
		\qquad
		P^\perp\omega=d^*|D|^{-2}(d\omega),
	\end{equation*}
	the boundedness of order $-1$ of the operators $d|D|^{-2}$ and $d^*|D|^{-2}$ gives the conclusion.
\end{proof}

Here is the Besov version of the previous Hodge--Sobolev estimates.

\begin{cor}[Besov Hodge--Sobolev estimate]
\label{cor:hodge_sobolev_Besov}
	Let $n\geq4$, $2\leq l\leq n-2$, and
	\begin{equation*}
		2\leq r\leq t\leq\infty.
	\end{equation*}
	Assume that $r$ is an even integer. If $\omega\in\mathscr S_0'\Lambda^l$ satisfies
	\begin{equation*}
		d\omega\in
		\big(L^1_0+\dot B^{s_r}_{r,t}\big)\Lambda^{l+1},
		\qquad
		d^*\omega\in
		\big(L^1_0+\dot B^{s_r}_{r,t}\big)\Lambda^{l-1},
	\end{equation*}
	then $\omega\in\dot B^{s_r+1}_{r,t}\Lambda^l$ and
	\begin{equation*}
		\|\omega\|_{\dot B^{s_r+1}_{r,t}\Lambda^l}
		\lesssim
		\|d\omega\|_{(L^1_0+\dot B^{s_r}_{r,t})\Lambda^{l+1}}
		+
		\|d^*\omega\|_{(L^1_0+\dot B^{s_r}_{r,t})\Lambda^{l-1}}.
	\end{equation*}
\end{cor}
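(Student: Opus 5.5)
The plan is to follow the proof of Corollary~\ref{cor:hodge_sobolev_TL} step by step, with the Besov Hodge estimates of Corollary~\ref{cor:hodge_sum_Besov} taking the place of the Triebel--Lizorkin ones. The point is that Corollary~\ref{cor:hodge_sum_Besov} requires only that $r$ be an even integer and $2\leq r\leq t\leq\infty$. It has no dependence on the form degree, so we may apply it in any degree between $1$ and $n-1$.

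First, consider $d^*\omega\in\mathscr S_0'\Lambda^{l-1}$. Since $d^*d^*=0$, it is coexact: the symbol identity $P^\perp=\operatorname{Id}-P$ together with $P\,d^*=d|D|^{-2}d^*d^*=0$ gives $P(d^*\omega)=0$. Because $2\leq l\leq n-2$, the degree $l-1$ lies in $[1,n-1]$. We therefore apply Corollary~\ref{cor:hodge_sum_Besov} in degree $l-1$, with vanishing exact part, and obtain
\begin{equation*}
	\|d^*\omega\|_{\dot B^{s_r}_{r,t}\Lambda^{l-1}}\lesssim\|d^*\omega\|_{(L^1_0+\dot B^{s_r}_{r,t})\Lambda^{l-1}}.
\end{equation*}
Next, $\star d\omega$ is an $(n-l-1)$-form, and it is coexact because $d^*\star d\omega=\pm\star dd\omega=0$. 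Its degree lies in $[1,n-1]$, again since $2\leq l\leq n-2$. The Hodge star is a coefficientwise isometric permutation up to signs, so it preserves both the $L^1_0+\dot B^{s_r}_{r,t}$ and the $\dot B^{s_r}_{r,t}$ norms. Applying Corollary~\ref{cor:hodge_sum_Besov} in degree $n-l-1$ then controls $\|d\omega\|_{\dot B^{s_r}_{r,t}\Lambda^{l+1}}$ by the corresponding sum norm.

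Finally, we reconstruct $\omega$ from these two quantities through the identities
\begin{equation*}
	P\omega=d|D|^{-2}(d^*\omega),
	\qquad
	P^\perp\omega=d^*|D|^{-2}(d\omega),
	\qquad
	\omega=P\omega+P^\perp\omega,
\end{equation*}
which hold in $\mathscr S_0'$. The operators $d|D|^{-2}$ and $d^*|D|^{-2}$ are Fourier multipliers that are smooth and homogeneous of degree $-1$. They therefore map $\dot B^{s_r}_{r,t}$ boundedly into $\dot B^{s_r+1}_{r,t}$ for every $t\in[1,\infty]$, since each dyadic block is handled by a uniformly bounded $L^r$ multiplier. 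This gives the stated bound.

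The one point that needs care is the endpoint $t=\infty$. In that case Corollary~\ref{cor:hodge_sum_Besov} is a direct consequence of the embedding $L^1\subset\dot B^{s_r}_{r,\infty}$, and the multiplier step works without change in $\ell^\infty L^r$. So no duality argument is needed there, and the case is covered uniformly. Beyond this, I expect no real obstacle. The only things to check are the degree bookkeeping that ensures $1\leq l-1$ and $n-l-1\leq n-1$, and the sign conventions relating the coexactness of $\star d\omega$ to the vanishing of $P(\star d\omega)$.
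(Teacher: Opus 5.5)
Your proposal is correct and is essentially the paper's own proof. You apply Corollary~\ref{cor:hodge_sum_Besov} to the coexact forms $d^*\omega$ (degree $l-1$) and $\star d\omega$ (degree $n-l-1$), which is exactly where $2\leq l\leq n-2$ is needed, and then recover $P\omega=d|D|^{-2}(d^*\omega)$ and $P^\perp\omega=d^*|D|^{-2}(d\omega)$ through the order $-1$ multipliers, as in the proof of Corollary~\ref{cor:hodge_sobolev_TL}; your remarks on the Hodge star acting as a signed permutation of coefficients and on the endpoint $t=\infty$ are accurate details that the paper leaves implicit.
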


\begin{proof}
	It suffices to apply Corollary~\ref{cor:hodge_sum_Besov} to $d^*\omega$ and to $\star d\omega$, and then argue as in the proof of Corollary~\ref{cor:hodge_sobolev_TL}.
\end{proof}

\begin{rem}
	The same argument, without taking one derivative, gives an estimate in terms of the exact and coexact components themselves. More precisely, in the Triebel--Lizorkin scale,
	\begin{equation*}
		\|\omega\|_{\dot F^{s_r}_{r,t}\Lambda^l}
		\lesssim
		\|P\omega\|_{(L^1_0+\dot F^{s_r}_{r,t})\Lambda^l}
		+\|P^\perp\omega\|_{(L^1_0+\dot F^{s_r}_{r,t})\Lambda^l}
	\end{equation*}
	whenever either $\max\{l,n-l\}<r'$ or the arithmetic assumptions of Theorem~\ref{thm:estimate_TL_multilinear} hold. The advantage is that this estimate now holds for $n\geq 2$ and $1\leq l\leq n-1$. An analogous estimate in the Besov scale follows from Theorem~\ref{thm:estimate_Besov_multilinear}.
\end{rem}

\subsection{Critical decompositions with bounded potentials}

On the primal side, the Bourgain--Brezis estimates strengthen the usual critical Hodge decomposition by allowing both potentials to be chosen bounded. The following is formulated in the spirit of Corollary~3 in \cite{bourgain:brezis:2004} and Corollaries~13 and~16 in \cite{bourgain:brezis:2007}.

\begin{cor}[Triebel--Lizorkin Hodge decomposition with bounded potentials]\label{cor:bounded_hodge_TL}
	Let $n\geq4$, $2\leq l\leq n-2$, and $1<p,q<\infty$. Assume that either
	\begin{equation*}
		\max\{l,n-l\}<p-1,
	\end{equation*}
	or that
	\begin{equation*}
		1<p\leq q\leq2,
		\qquad
		q'\text{ is even},
		\qquad
		\frac{p'}{q'}\in\mathbb N.
	\end{equation*}
	Then
	\begin{equation*}
		\dot F^{\frac np-1}_{p,q}\Lambda^l
		=
		d\big[(\dot F^{\frac np}_{p,q}\cap L^\infty)\Lambda^{l-1}\big]
		\oplus
		d^*\big[(\dot F^{\frac np}_{p,q}\cap L^\infty)\Lambda^{l+1}\big]
	\end{equation*}
	with equivalent norms.
\end{cor}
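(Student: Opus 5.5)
The plan is to start from the usual homogeneous Hodge decomposition of Section~\ref{subsec:hodge_projections}. Given $f\in\dot F^{\frac np-1}_{p,q}\Lambda^l$, write
\begin{equation*}
	f=Pf+P^\perp f,
	\qquad
	Pf=d\alpha,
	\qquad
	P^\perp f=d^*\beta,
\end{equation*}
with the canonical potentials $\alpha=|D|^{-2}d^*f\in\dot F^{\frac np}_{p,q}\Lambda^{l-1}$ and $\beta=|D|^{-2}df\in\dot F^{\frac np}_{p,q}\Lambda^{l+1}$. Both potentials satisfy $\|\alpha\|+\|\beta\|\lesssim\|f\|_{\dot F^{\frac np-1}_{p,q}\Lambda^l}$, but in general they are not bounded. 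The aim is to replace each potential by a bounded one without changing $d\alpha$ or $d^*\beta$.

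For $\alpha$ we apply the Bourgain--Brezis selection in degree $l-1$. In the first parameter range, Theorem~\ref{thm:1} in degree $l-1$ requires $n<p+l-1$. Since $n-l<p-1$, this holds, and $l-1\geq1$ because $l\geq2$. In the second range, Theorem~\ref{thm:2} (Corollary~\ref{cor:TL:main}) applies in every degree $1\leq l-1\leq n-1$. Either way we obtain $u\in(\dot F^{\frac np}_{p,q}\cap L^\infty)\Lambda^{l-1}$ with $du=d\alpha$ and $\|u\|\lesssim\|\alpha\|$.

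For $\beta$ we pass through the Hodge star. Up to sign, $d^*\beta=\pm\star d\star\beta$, and $\star\beta$ is an $(n-l-1)$-form in the same space with the same norm. The selection result in degree $n-l-1$ needs $n<p+n-l-1$, that is $l<p-1$, which is the other half of the maximum. In the second range it again holds in every degree, and $1\leq n-l-1$ because $l\leq n-2$. This gives $w$ bounded with $dw=d\star\beta$. Setting $\tilde\beta=\pm\star^{-1}w$ yields $d^*\tilde\beta=d^*\beta$, since $\star$ is an isometry coefficientwise and so preserves both $L^\infty$ and $\dot F^{\frac np}_{p,q}$. This is the step that forces $n\geq4$ and $2\leq l\leq n-2$, so that both auxiliary degrees lie in $[1,n-1]$.

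The decomposition $f=du+d^*\tilde\beta$ then gives the inclusion ``$\subset$'' with norm control. For the reverse inclusion, $d$ and $d^*$ map $\dot F^{\frac np}_{p,q}$ boundedly into $\dot F^{\frac np-1}_{p,q}$. The sum is direct because the ranges of $d$ and $d^*$ are the ranges of $P$ and $P^\perp$. The equivalence of norms follows from $\|f\|\sim\|Pf\|+\|P^\perp f\|$ and the quotient-norm bounds above.

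The main obstacle I expect is only the bookkeeping of degrees and exponents: checking that each application of Theorem~\ref{thm:1} uses the correct degree, $l-1$ or $n-l-1$, so that the condition $\max\{l,n-l\}<p-1$ emerges exactly. The signs in $d^*=\pm\star d\star$ are harmless.
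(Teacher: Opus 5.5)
Your proposal is correct and is essentially the paper's proof. The paper also starts from the standard Hodge decomposition and applies Theorem~\ref{thm:1}, or Theorem~\ref{thm:2} in the second range, in degree $l-1$ to the exact part and, after conjugating by the Hodge star, in degree $n-l-1$ to the coexact part, where the conditions $p>n-l+1$ and $p>l+1$ combine to give exactly $\max\{l,n-l\}<p-1$.
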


\begin{proof}
	The usual Hodge decomposition gives
	\begin{equation*}
		\dot F^{\frac np-1}_{p,q}\Lambda^l
		=
		d\big[\dot F^{\frac np}_{p,q}\Lambda^{l-1}\big]
		\oplus
		d^*\big[\dot F^{\frac np}_{p,q}\Lambda^{l+1}\big].
	\end{equation*}
	In the first parameter range, Theorem~\ref{thm:1} applies in degrees $l-1$ and $n-l-1$, since
	\begin{equation*}
		p>n-l+1
		\qquad\text{and}\qquad
		p>l+1.
	\end{equation*}
	In the second range, Theorem~\ref{thm:2} applies in every form degree. Applying the corresponding Bourgain--Brezis selection to the exact component and, after conjugation by the Hodge star operator, to the coexact component proves the result.
\end{proof}

The Besov counterpart of the preceding Hodge decomposition also holds.

\begin{cor}[Besov Hodge decomposition with bounded potentials]
\label{cor:bounded_hodge_Besov}
	Let $n\geq4$, $2\leq l\leq n-2$, $1<p\leq2$, and
	$1\leq q\leq p$. Assume that $p'$ is an even integer. Then
	\begin{equation*}
		\dot B^{\frac np-1}_{p,q}\Lambda^l
		=
		d\big[(\dot B^{\frac np}_{p,q}\cap L^\infty)\Lambda^{l-1}\big]
		\oplus
		d^*\big[(\dot B^{\frac np}_{p,q}\cap L^\infty)\Lambda^{l+1}\big]
	\end{equation*}
	with equivalent norms.
\end{cor}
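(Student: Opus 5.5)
The argument follows the proof of Corollary~\ref{cor:bounded_hodge_TL}, with Corollary~\ref{cor:Besov_solution} playing the role of Theorem~\ref{thm:2}. We first recall the usual homogeneous Besov Hodge decomposition from Section~\ref{subsec:hodge_projections}:
\begin{equation*}
	\dot B^{\frac np-1}_{p,q}\Lambda^l
	=
	d\big[\dot B^{\frac np}_{p,q}\Lambda^{l-1}\big]
	\oplus
	d^*\big[\dot B^{\frac np}_{p,q}\Lambda^{l+1}\big].
\end{equation*}
Explicitly, for $\omega\in\dot B^{\frac np-1}_{p,q}\Lambda^l$ we write $P\omega=d\alpha$ and $P^\perp\omega=d^*\beta$, where
\begin{equation*}
	\alpha=|D|^{-2}d^*\omega\in\dot B^{\frac np}_{p,q}\Lambda^{l-1},
	\qquad
	\beta=|D|^{-2}d\omega\in\dot B^{\frac np}_{p,q}\Lambda^{l+1}.
\end{equation*}
Both potentials are bounded by $\|\omega\|_{\dot B^{\frac np-1}_{p,q}\Lambda^l}$, because $P$ and $P^\perp$ are bounded for $1<p<\infty$ and every $q$, and the operators $|D|^{-2}d^*$ and $|D|^{-2}d$ have order $-1$.

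For the exact part, we apply Corollary~\ref{cor:Besov_solution} in degree $l-1$. This is admissible since $2\leq l$ gives $1\leq l-1\leq n-1$. We obtain $a\in(\dot B^{\frac np}_{p,q}\cap L^\infty)\Lambda^{l-1}$ with $da=d\alpha=P\omega$ and $\|a\|\lesssim\|\alpha\|$.

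For the coexact part, we conjugate by the Hodge star. Since $d^*=\pm\star d\star$, we have $d^*\beta=\pm\star d(\star\beta)$, and $\star\beta\in\dot B^{\frac np}_{p,q}\Lambda^{n-l-1}$. The condition $l\leq n-2$ gives $1\leq n-l-1\leq n-1$, so Corollary~\ref{cor:Besov_solution} applies in degree $n-l-1$. It yields $\gamma$, bounded in the intersection space, with $d\gamma=d(\star\beta)$. We then set $b=\pm\star^{-1}\gamma$, which gives $d^*b=d^*\beta$. Because $\star$ is an isometry acting coefficientwise by permutations and signs, it preserves both $\dot B^{\frac np}_{p,q}$ and $L^\infty$. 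In the case $q=1$, Corollary~\ref{cor:Besov_solution} already covers the embedding $\dot B^{\frac np}_{p,1}\subset L^\infty$, so no separate argument is needed.

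This gives $\omega=da+d^*b$ with the stated bounds. The sum is direct because the exact and coexact components are unique. For the reverse inequality, $d$ and $d^*$ map $\dot B^{\frac np}_{p,q}$ boundedly into $\dot B^{\frac np-1}_{p,q}$, so the norms are equivalent. Note that the quotient norm on $d[(\dot B\cap L^\infty)\Lambda^{l-1}]$ is controlled by $\|a\|$.

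The only step that needs care is the degree bookkeeping. Unlike the first range in Corollary~\ref{cor:bounded_hodge_TL}, the arithmetic hypothesis on $p'$ here does not depend on the form degree. The restrictions $n\geq4$ and $2\leq l\leq n-2$ are exactly what place both auxiliary degrees $l-1$ and $n-l-1$ in $[1,n-1]$. Apart from that, the argument just applies results already established.
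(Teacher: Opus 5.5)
Your proposal is correct and follows the paper's proof: the paper's argument, spelled out, is exactly your combination of the usual Besov Hodge decomposition with the Besov Bourgain--Brezis selection (Corollary~\ref{cor:Besov_solution}), applied in degree $l-1$ to the exact part and, after conjugation by the Hodge star, in degree $n-l-1$ to the coexact part. One cosmetic point is that $b=\star^{-1}\gamma$ already gives $d^*b=d^*\beta$, so no extra sign is needed there.
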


\begin{proof}
	The strategy is similar to the previous proof. It suffices to combine the usual Besov Hodge decomposition with Theorem~\ref{thm:intro:besov}, in degrees $l-1$ and $n-l-1$.
\end{proof}

\begin{rem}
	The same argument one degree higher gives decompositions of the critical spaces themselves through projections of bounded forms. More specifically, in
	the Triebel--Lizorkin scale,
	\begin{equation*}
		\dot F^{\frac np}_{p,q}\Lambda^l
		=
		P\big[(\dot F^{\frac np}_{p,q}\cap L^\infty)\Lambda^l\big]
		\oplus
		P^\perp\big[(\dot F^{\frac np}_{p,q}\cap L^\infty)\Lambda^l\big]
	\end{equation*}
	whenever either $\max\{l,n-l\}<p$ or the arithmetic assumptions of
	Theorem~\ref{thm:2} hold. An advantage is that this decomposition now holds for $n\geq 2$ and $1\leq l\leq n-1$. An analogous Besov statement follows from Theorem~\ref{thm:intro:besov}.
\end{rem}

\subsection{An endpoint Riesz-potential estimate}

We conclude with a direct consequence of the identity \eqref{eq:recover_identity_from_S}, providing an endpoint version of the dual Bourgain--Brezis estimates.

\begin{cor}\label{cor:endpoint_estimate_L_infty}
	Let $n\geq2$ and $1\leq l\leq n-1$. For every finite
	$\Lambda^l$-valued Radon measure $\mu$ such that
	$|D|^{-n}P\mu\in L^\infty\Lambda^l$, one has
	\begin{equation*}
		\big\||D|^{-n}\mu\big\|_{L^\infty\Lambda^l}
		\lesssim
		\|\mu\|_{\mathscr M\Lambda^l}
		+
		\big\||D|^{-n}P\mu\big\|_{L^\infty\Lambda^l}.
	\end{equation*}
	If $\omega\in L^1\Lambda^l$ and
	$|D|^{-n}P\omega\in C\Lambda^l$, then
	\begin{equation*}
		\big\||D|^{-n}\omega\big\|_{C\Lambda^l}
		\lesssim
		\|\omega\|_{L^1\Lambda^l}
		+
		\big\||D|^{-n}P\omega\big\|_{C\Lambda^l}.
	\end{equation*}
	Finally, if $\omega\in L^1_0\Lambda^l$ and
	$|D|^{-n}P\omega\in C_0\Lambda^l$, then
	\begin{equation*}
		\big\||D|^{-n}\omega\big\|_{C_0\Lambda^l}
		\lesssim
		\|\omega\|_{L^1_0\Lambda^l}
		+
		\big\||D|^{-n}P\omega\big\|_{C_0\Lambda^l}.
	\end{equation*}
\end{cor}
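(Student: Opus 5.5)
The plan is to apply the identity \eqref{eq:recover_identity_from_S}, $\operatorname{Id}=\frac nl(P-S)$, after composing with $|D|^{-n}$. Since $|D|^{-n}$ commutes with $P$ and $S$, this gives
\begin{equation*}
	|D|^{-n}\mu
	=
	\frac nl\Big(|D|^{-n}P\mu-|D|^{-n}S\mu\Big)
\end{equation*}
in $\mathscr S_0'\Lambda^l$. All three operators are well defined on $\mathscr S_0'$, because their symbols, multiplied by $\widehat\phi$ for $\phi\in\mathscr S_0$, stay in $\mathscr S_0$. A finite measure defines an element of $\mathscr S_0'$, so the identity holds as homogeneous distributions.

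For the first estimate I would take the $L^\infty$ norm of both sides. Proposition~\ref{prop:Beurling_transform} bounds $\||D|^{-n}S\mu\|_{L^\infty\Lambda^l}$ by $\|\mu\|_{\mathscr M\Lambda^l}$, and the term with $P$ is bounded by hypothesis. So $|D|^{-n}\mu$ equals, modulo polynomials, a bounded form with the stated bound. One should check that the representative is genuinely bounded. A polynomial that differs from an $L^\infty$ function by an $L^\infty$ function must be constant, and constants vanish in our homogeneous setting. So the identification of $L^\infty$ modulo constants with a subspace of $\mathscr S'/\mathscr P$ from Section~\ref{sec:preliminaries} applies.

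For the $C$ statement, I would use Proposition~\ref{prop:Beurling_transform} again: $|D|^{-n}S\omega\in C\Lambda^l$ when $\omega\in L^1\Lambda^l$. Together with $|D|^{-n}P\omega\in C\Lambda^l$, the identity shows that $|D|^{-n}\omega$ is a bounded continuous function (again up to the constant ambiguity), with the same norm bound. The $C_0$ case is handled the same way, using $|D|^{-n}S[L^1_0\Lambda^l]\subset C_0\Lambda^l$. Here I would note that the representative in $C_0$ is canonical, since it is the unique one vanishing at infinity. Then the sum of two $C_0$ functions lies in $C_0$, which fixes the constant.

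The only delicate step is this bookkeeping of representatives modulo polynomials and constants. I expect it to be routine given the conventions of Section~\ref{sec:preliminaries}. In the $C_0$ case the functions are in fact fixed exactly, because the dyadic series converges absolutely and uniformly by Corollary~\ref{scholium:bound_S}.
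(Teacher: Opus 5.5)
Your proposal is correct and follows the paper's proof: compose the identity $\operatorname{Id}=\frac nl(P-S)$ with $|D|^{-n}$, then bound the $S$-term by Proposition~\ref{prop:Beurling_transform} in each of the $L^\infty$, $C$ and $C_0$ settings, while the $P$-term is controlled by hypothesis. Your bookkeeping of representatives modulo polynomials and constants goes beyond what the paper writes out, but it is consistent with the paper's homogeneous conventions.
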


\begin{proof}
	By \eqref{eq:recover_identity_from_S},
	\begin{equation*}
		|D|^{-n}\mu
		=
		\frac nl\left(
		|D|^{-n}P\mu-|D|^{-n}S\mu
		\right).
	\end{equation*}
	The three conclusions follow from
	Proposition~\ref{prop:Beurling_transform}.
\end{proof}


\section{The intrinsic nonlinearity of Bourgain--Brezis selections}
\label{section:nonlinearity}

The estimates proved above provide bounded selections for underdetermined Hodge systems. These are linear differential constraints. Such selections cannot, however, be chosen linearly. Bourgain and Brezis established such an obstruction for Hodge systems of degree $1$ or $n-1$ on the torus, which includes the divergence equation. See \cite[Proposition~2]{bourgain:brezis:2003} and \cite[Proposition~9]{bourgain:brezis:2007}. We record here a Euclidean version which applies to arbitrary nonzero homogeneous Fourier multipliers and to the critical Sobolev, Triebel--Lizorkin, and Besov scales.

Let $V$ and $W$ be finite-dimensional real vector spaces. A symbol
\begin{equation*}
	M\in C^\infty\big(
	\mathbb R^n\setminus\{0\},\mathcal L(V,W)
	\big)
\end{equation*}
is said to be homogeneous of degree $\alpha\in\mathbb R$ if
\begin{equation*}
	M(\lambda\xi)=\lambda^\alpha M(\xi)
\end{equation*}
for every $\lambda>0$ and every $\xi\neq0$. The associated multiplier
$M(D)$ acts naturally on $\mathscr S_0'(\mathbb R^n,V)$.

\begin{thm}[Absence of bounded linear selections]
\label{thm:no_linear_selection}
	Let $M(D)$ be a nonzero homogeneous Fourier multiplier as above. Let $X$
	be one of the critical spaces
	\begin{equation*}
		\dot W^{\frac np,p}(\mathbb R^n,V),
		\qquad
		\dot F^{\frac np}_{p,q}(\mathbb R^n,V),
		\qquad
		\dot B^{\frac np}_{p,q}(\mathbb R^n,V),
	\end{equation*}
	where $1<p<\infty$, and where $1<q<\infty$ in the
	Triebel--Lizorkin and Besov cases. Then there is no bounded linear map
	\begin{equation*}
		K:X\rightarrow L^\infty(\mathbb R^n,V)
	\end{equation*}
	such that
	\begin{equation}\label{eq:linear_selection_constraint}
		M(D)Ku=M(D)u
	\end{equation}
	for every $u\in X$.
\end{thm}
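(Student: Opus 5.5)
The plan is to average away the freedom in $K$, following the classical Bourgain--Brezis obstruction argument, until we are left with a Fourier multiplier; then show that no nonzero homogeneous multiplier of degree zero maps the critical space $X$ into $L^\infty$. Suppose such a $K$ exists. For $h\in\mathbb R^n$ and $\lambda>0$, let $\tau_h u=u(\cdot-h)$ and $\delta_\lambda u=u(\lambda\,\cdot)$. Set
\begin{equation*}
	K_{h,\lambda}=\tau_{-h}\delta_{\lambda^{-1}}K\delta_\lambda\tau_h .
\end{equation*}
At the critical exponent $s=\frac np$, the homogeneous norms of $X$ and of $L^\infty$ are invariant under $\tau_h$ and $\delta_\lambda$. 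Hence $\|K_{h,\lambda}\|_{X\to L^\infty}=\|K\|$. Moreover, since $M$ is homogeneous, $M(D)$ commutes with both actions up to the scalar factor $\lambda^\alpha$, which cancels. So every $K_{h,\lambda}$ still satisfies \eqref{eq:linear_selection_constraint}.

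The group $G=\mathbb R^n\rtimes(0,\infty)$ is amenable; fix an invariant mean $\mathfrak m$ on it. Define $\bar K u$ by
\begin{equation*}
	\langle \bar K u,\phi\rangle=\mathfrak m_{(h,\lambda)}\langle K_{h,\lambda}u,\phi\rangle,
	\qquad \phi\in L^1 ,
\end{equation*}
which is legitimate because $L^\infty=(L^1)'$. Then $\bar K:X\to L^\infty$ is bounded and linear, commutes with translations and dilations, and satisfies the constraint. To check the constraint, pair $M(D)K_{h,\lambda}u=M(D)u$ with $\phi\in\mathscr S_0(\mathbb R^n,W)$. The transposed multiplier $M(D)^T\phi$ lies in $\mathscr S_0\subset L^1$, so the identity passes through the mean.

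A bounded translation-invariant operator from $X$ into $\mathscr S_0'$ is a convolution operator, hence a Fourier multiplier with symbol $\bar K(\xi)\in\mathcal L(V,V)$. Dilation invariance forces $\bar K$ to be homogeneous of degree zero. I would test against $\mathscr S_0$ functions with Fourier support in small annular sectors to obtain that $\bar K$ is a locally integrable (indeed $L^\infty$) matrix function on $\mathbb R^n\setminus\{0\}$. The constraint gives $M(\xi)\bar K(\xi)=M(\xi)$ almost everywhere. Since $M\neq0$ is smooth, there is an open cone $\Gamma$ and a vector $v\in V$ with $M(\xi)v\neq0$ on $\Gamma$. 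Then $\bar K(\xi)v\neq0$, and after shrinking $\Gamma$ and choosing a covector, $\langle w,\bar K(\xi)v\rangle\geq c>0$ on a set of positive measure of directions in $\Gamma$.

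The final step, which I expect to be the main obstacle, is to contradict boundedness. I would dualize. The adjoint $\bar K^*$ maps $(L^\infty)'$ to $X'$, and testing on approximate identities shows that $\bar K^*\delta_0$, the kernel $k$ of $\bar K^*$ paired with $w$, must lie in $X'$, which is $\dot F^{-\frac np}_{p',q'}$ or $\dot B^{-\frac np}_{p',q'}$ ($q'=2$ in the Sobolev case). But $k$ is homogeneous of degree $-n$, so $\Delta_j k=2^{jn}k_0(2^j\cdot)$ with $k_0=\mathscr F^{-1}(\varphi\,\bar K^T w)\neq0$. Hence $2^{-jn/p}\|\Delta_j k\|_{L^{p'}}=\|k_0\|_{L^{p'}}$ for every $j$. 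The Besov norm is therefore an $\ell^{q'}$ sum of a nonzero constant over $j\in\mathbb Z$, which is infinite since $q'<\infty$. For the Triebel--Lizorkin (and Sobolev) case, I would use the embedding $\dot F^{-\frac np}_{p',q'}\subset\dot B^{-\frac np}_{p',\max\{p',q'\}}$ to reduce to the same divergence. The delicate points are making $\bar K^*\delta_0$ rigorous, via mollified deltas $\phi_\epsilon$ with uniformly bounded $L^1$ norm and weak-$*$ compactness in $X'$, and ensuring that $k_0\neq0$ follows from the almost-everywhere nonvanishing of $\bar K(\xi)v$ on $\Gamma$.
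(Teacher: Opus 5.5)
Your proposal is correct, but it reaches the contradiction by a different route from the paper in both main steps.

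\emph{Averaging.} The paper averages in two explicit stages. It first takes weak-$*$ integrals of $\tau_hK\tau_{-h}$ over balls $B(0,R)$, then of $\delta_\lambda K'\delta_{\lambda^{-1}}$ over $[A^{-1},A]$ with respect to $d\lambda/\lambda$. Each stage is followed by Banach--Alaoglu, a diagonal argument using separability of $X$, and a commutator estimate of size $|x_0|/R$, resp.\ $|\log\mu|/\log A$, which gives exact invariance in the limit. Your single invariant mean on the affine group $\mathbb R^n\rtimes(0,\infty)$ produces both invariances at once. It is shorter but non-constructive. Take the mean right-invariant, since $U_{g_0}^{-1}K_gU_{g_0}=K_{gg_0}$ for $U_g=\delta_\lambda\tau_h$. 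You can place it on $\ell^\infty$ of the group viewed as a discrete solvable group to avoid measurability questions.

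\emph{Contradiction.} The paper first reduces the Triebel--Lizorkin and Besov cases to a Sobolev space $\dot W^{\frac n{p_0},p_0}$ by embedding. It then invokes H\"ormander's theorem to write $K''|D|^{-\frac np}:L^p\to L^\infty$ as convolution with some $k\in L^{p'}$. Homogeneity $k(x)=\lambda^{n(1-\frac1p)}k(\lambda x)$ turns $\|k\|_{L^{p'}}^{p'}$ into infinitely many equal dyadic-shell contributions, so $k=0$. You instead extract the kernel $k_w$ directly in $X'$. Dilation invariance makes all critical quantities $2^{-\frac{jn}{p}}\|\Delta_jk_w\|_{L^{p'}}$ equal, so the $\ell^{q'}$ (or $\ell^{\max\{p',q'\}}$) sum diverges. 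This is the same scaling obstruction seen on the dual side, and it avoids both the Sobolev reduction and H\"ormander's representation theorem.

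To make the extraction rigorous, use translation invariance: $\|\tau_h\bar Ku-\bar Ku\|_{L^\infty}\lesssim\|\tau_hu-u\|_X\to0$. Hence $\bar Ku$ is uniformly continuous, and $\langle\bar Ku,\phi_\epsilon w\rangle\to\langle\bar Ku(0),w\rangle$ for every approximate identity. This independence of the mollifier is exactly what transfers dilation invariance from $\bar K$ to $k_w$.

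You should drop the symbol step. Your claim that $\bar K(\xi)$ is a locally integrable, even bounded, function is not justified. A bounded translation-invariant map into $L^\infty$ has a priori only a distributional symbol on $\mathbb R^n\setminus\{0\}$, and nothing you say excludes singular homogeneous ones. The step is also unnecessary. Once $k_w=0$ for every $w$, you get $\bar Ku(0)=0$, hence $\bar Ku(x)=\bar K(\tau_{-x}u)(0)=0$ for all $u\in\mathscr S_0$ and all $x$. By density $\bar K=0$, and then $M(D)u=M(D)\bar Ku=0$ for all $u\in X$ contradicts $M\neq0$, exactly as in the final lines of the paper. So the cone $\Gamma$, the a.e.\ identity $M\bar K=M$, and the positivity of $\langle w,\bar K(\xi)v\rangle$ can all be deleted.
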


\begin{proof}
	We first reduce the Triebel--Lizorkin and Besov cases to the Sobolev case. Choose
	\begin{equation*}
		1<p_0<\min\{p,q\}.
	\end{equation*}
	The standard critical embeddings give
	\begin{equation*}
		\dot W^{\frac n{p_0},p_0}
		\subset
		\dot F^{\frac np}_{p,q}
		\qquad\text{and}\qquad
		\dot W^{\frac n{p_0},p_0}
		\subset
		\dot B^{\frac np}_{p,q}.
	\end{equation*}
	Thus, a bounded map $K$ on either of the latter spaces restricts to a bounded map on a critical Sobolev space and still satisfies \eqref{eq:linear_selection_constraint}. It is therefore enough to treat the Sobolev case
	\begin{equation*}
		X=\dot W^{\frac np,p}(\mathbb R^n,V).
	\end{equation*}
	
	Assume, toward a contradiction, that such an operator $K$ exists. For $h\in\mathbb R^n$, let
	\begin{equation*}
		\tau_hu(x)=u(x-h)
	\end{equation*}
	and, for $R>0$, define the average operator
	\begin{equation*}
		K_Ru
		=\frac1{|B(0,R)|}
		\int_{B(0,R)}
		\tau_hK\tau_{-h}udh.
	\end{equation*}
	Note that this is not understood as a Bochner integral, because translations are not strongly continuous on $L^\infty$ and the integrand is therefore not necessarily strongly measurable. Instead, the element $K_Ru$ is defined as a weak-star integral. That is to say, it is the unique element $K_Ru\in L^\infty$ such that
	\begin{equation*}
		\langle f,K_Ru\rangle
		=\frac1{|B(0,R)|}
		\int_{B(0,R)}
		\langle f,\tau_hK\tau_{-h}u\rangle dh
	\end{equation*}
	for all $f\in L^1_0$. The last integral is well defined, for
	\begin{equation*}
		h\mapsto\langle f,\tau_hK\tau_{-h}u\rangle=\langle \tau_{-h}f,K\tau_{-h}u\rangle
	\end{equation*}
	is continuous.
	
	Now, the family $K_R:X\to L^\infty$ is uniformly bounded. Since $M(D)$ is translation invariant, one also has
	\begin{equation*}
		M(D)K_Ru=M(D)u.
	\end{equation*}
	
	For any fixed $u\in X$, by the Banach--Alaoglu theorem, there exists a sequence $R_k\to\infty$, such that $K_{R_k}u$ converges, as $k\to\infty$, in the weak-star topology of $L^\infty$. Now, since $X$ is separable, a diagonal process shows the existence of a single sequence $R_k\to\infty$ such that $K_{R_k}u$ converges in the same weak-star topology, for all $u\in X$. This allows us to define a bounded linear operator
	\begin{equation*}
		K':X\to L^\infty,
	\end{equation*}
	with
	\begin{equation*}
		M(D) K'u=M(D)u,
	\end{equation*}
	by the weak-star limit
	\begin{equation*}
		K_{R_k}u\rightharpoonup^*K'u
	\end{equation*}
	for every $u\in X$.
	
	Next, we show that $K'$ is translation invariant. To that end, we take $x_0\in\mathbb{R}^n$ and derive the commutator estimate, for any $f\in L^1_0$,
	\begin{equation*}
		\begin{aligned}
			\big|\big\langle f,[\tau_{x_0},K_R]u\big\rangle\big|
			&=\frac 1{|B(0,R)|}\left|\int
			\big(\mathds{1}_{h\in B(x_0,R)}-\mathds{1}_{h\in B(0,R)}\big)
			\big\langle f,\tau_hK\tau_{x_0-h}u\big\rangle dh\right|
			\\
			&\lesssim \|f\|_{L^1_0}\|u\|_X
			\int
			\big|\mathds{1}_{h\in B(R^{-1}x_0,1)}-\mathds{1}_{h\in B(0,1)}\big|
			dh
			\lesssim \frac{|x_0|}{R}\|f\|_{L^1_0}\|u\|_X.
		\end{aligned}
	\end{equation*}
	Therefore, by weak-star convergence of $K_{R_k}u$, we infer that
	\begin{equation*}
		[\tau_{x_0},K']=0,
	\end{equation*}
	which establishes translation invariance of $K'$.
	
	We next average over dilations. For $\lambda>0$, set
	\begin{equation*}
		\delta_\lambda u(x)=u\Big(\frac x\lambda\Big).
	\end{equation*}
	Since the Sobolev norm is critical, $\delta_\lambda$ acts isometrically on both $X$ and $L^\infty$. For $A>1$, define
	\begin{equation*}
		K_A'u
		=
		\frac1{2\log A}
		\int_{A^{-1}}^A
		\delta_\lambda K'\delta_{\lambda^{-1}}u
		\frac{d\lambda}{\lambda},
	\end{equation*}
	in the sense of a weak-star integral, as before.
	Here, again, the family $K_A':X\to L^\infty$ is uniformly bounded, and the homogeneity of $M$ implies
	\begin{equation*}
		M(D)K_A'u=M(D)u.
	\end{equation*}
	Moreover, noticing that $\tau_h\delta_\lambda=\delta_\lambda\tau_{\lambda^{-1}h}$ and $\tau_{\lambda^{-1}h}\delta_{\lambda^{-1}}=\delta_{\lambda^{-1}}\tau_h$, we see that $K'_A$ is translation invariant.

	Thus, a similar convergence argument in the weak-star topology of $L^\infty$ allows us to define a bounded linear operator
	\begin{equation*}
		K'':X\to L^\infty,
	\end{equation*}
	by a weak-star limit
	\begin{equation*}
		K_{A_k}'u\rightharpoonup^* K''u,
	\end{equation*}
	for some suitable sequence $A_k\to\infty$. It follows that $K''$ is translation invariant and
	\begin{equation}\label{system:hodge}
		M(D)K''u=M(D)u
	\end{equation}
	for all $u\in X$.
	
	We show now that $K''$ is homogeneous of order zero. To that end, considering any $\mu>0$, we find, for any $f\in L^1_0$, that
	\begin{equation*}
		\begin{aligned}
			\big|\big\langle f,[\delta_\mu,K_A']u\big\rangle\big|
			&=\frac 1{2\log A}\left|\int
			\big(\mathds{1}_{\lambda\in [\mu A^{-1},\mu A]}-\mathds{1}_{\lambda\in [A^{-1},A]}\big)
			\big\langle f,\delta_\lambda K'\delta_{\mu\lambda^{-1}}u\big\rangle \frac{d\lambda}\lambda\right|
			\\
			&\lesssim \|f\|_{L^1_0}\|u\|_{X}
			\frac 1{\log A}\int
			\big|\mathds{1}_{\lambda\in [\mu A^{-1},\mu A]}-\mathds{1}_{\lambda\in [A^{-1},A]}\big|
			\frac{d\lambda}\lambda
			\\
			&= \|f\|_{L^1_0}\|u\|_{X}
			\int
			\big|\mathds{1}_{s\in \left[\frac{\log \mu}{\log A}-1,\frac{\log \mu}{\log A} +1\right]}-\mathds{1}_{s\in [-1,1]}\big|
			ds
			\lesssim \frac{|\log\mu|}{\log A}\|f\|_{L^1_0}\|u\|_{X}.
		\end{aligned}
	\end{equation*}
	Therefore, by weak-star convergence of $K_{A_k}'u$, we deduce that
	\begin{equation*}
		[\delta_\mu,K'']=0,
	\end{equation*}
	which establishes the homogeneity of $K''$.
	
	Consequently, by translation invariance of the bounded operator
	\begin{equation*}
		K''|D|^{-\frac np}:L^p\to L^\infty,
	\end{equation*}
	we conclude that it can be written as a convolution
	\begin{equation*}
		K''|D|^{-\frac np}u=\int_{\mathbb{R}^n}k(x-y)u(y)dy,
	\end{equation*}
	defined in the homogeneous sense on subspaces of $\mathscr S_0'$, where $k(x)$ is a linear map in $\mathcal{L}(V,V)$ with coefficients in $L^{p'}(\mathbb{R}^n)$. We refer to the seminal work of H\"ormander \cite{hormander:1960} for details on translation invariant operators.
	
	On the other hand, exploiting the homogeneity of $K''$, we deduce the homogeneity of the kernel
	\begin{equation*}
		k(x)=\lambda^{n(1-\frac 1p)}k(\lambda x),
	\end{equation*}
	for all $\lambda>0$. For every $j\in\mathbb Z$, this gives
	\begin{equation*}
		\int_{\mathbb{R}^n}|k(x)|^{p'}dx
		=\sum_{j\in\mathbb{Z}}
		\int_{\{2^j\leq |x|<2^{j+1}\}}
		|k(x)|^{p'}dx
		=\sum_{j\in\mathbb{Z}}\int_{\{1\leq |x|<2\}}
		|k(x)|^{p'}dx,
	\end{equation*}
	which implies that $k=0$, because $k\in L^{p'}$.
	
	Since $|D|^{-\frac np}$ maps $L^p$ onto $X$, it follows that $K''=0$, which, in view of \eqref{system:hodge}, gives
	\begin{equation*}
		M(D)u=0
	\end{equation*}
	for every $u\in X$. This contradicts the assumption that $M$ is nonzero. Indeed, one may choose $u\in\mathscr S_0(\mathbb R^n,V)$ whose Fourier transform is supported near a point at which $M$ does not vanish.
	
	We conclude that the linear operator $K$ does not exist, thereby completing the proof.
\end{proof}

Taking $V=\Lambda^l$, $W=\Lambda^{l+1}$, and $M(D)=d$ gives the conclusion relevant to the present paper.

\begin{cor}[Nonlinearity of Bourgain--Brezis selections]\label{cor:nonlinear_hodge_selection}
	Let $n\geq2$, $1\leq l\leq n-1$, and let $X$ be any of the critical
	spaces appearing in Theorem~\ref{thm:no_linear_selection}. There is no
	bounded linear map
	\begin{equation*}
		K:X\Lambda^l\rightarrow L^\infty\Lambda^l
	\end{equation*}
	such that
	\begin{equation*}
		dKv=dv
	\end{equation*}
	for every $v\in X\Lambda^l$.
	
	In particular, except for the endpoint Besov case $q=1$, where the critical space already embeds into $L^\infty$, the bounded Bourgain--Brezis selections furnished by the main results of this paper are necessarily nonlinear.
\end{cor}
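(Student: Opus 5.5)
The first assertion will be obtained directly from Theorem~\ref{thm:no_linear_selection} with $V=\Lambda^l$, $W=\Lambda^{l+1}$ and $M(\xi)=i\varepsilon_\xi$, the symbol of $d$. This symbol is smooth on $\mathbb R^n\setminus\{0\}$ and homogeneous of degree one, so only two points need checking. First, $M$ is nonzero. Since $l\leq n-1$, for $\xi\neq0$ one can pick an $l$-form in $\Lambda^l(e^\perp)$, with $e=\xi/|\xi|$, and wedging it with $\xi$ gives a nonzero form. Equivalently, $\operatorname{rank}P^\perp(\xi)=\binom{n-1}{l}>0$ by \eqref{eq:hodge_ranks}, so $\varepsilon_\xi\neq0$. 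Second, the spaces $X\Lambda^l$ coincide with $X(\mathbb R^n,\Lambda^l)$ up to equivalent norms. A bounded linear $K$ with $dKv=dv$ is then exactly an operator excluded by Theorem~\ref{thm:no_linear_selection}. No averaging argument needs to be repeated, since the translation and dilation averaging has already been done in that theorem.

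For the second assertion, I will go through the main results and match each with an admissible space. The selections come from Corollaries~\ref{cor:hilbertian_solution}, \ref{cor:hilbertian_Besov_solution}, \ref{cor:Besov_solution} and \ref{cor:TL:main}, together with Theorem~\ref{thm:1}. They are posed on the following spaces:
\begin{itemize}
\item $\dot H^{\frac n2}=\dot W^{\frac n2,2}$;
\item $\dot F^{\frac np}_{p,q}$ with $1<p,q<\infty$;
\item $\dot B^{\frac np}_{p,q}$ with $1\leq q\leq p$.
\end{itemize}
Whenever $1<q<\infty$, all of these lie in the range of Theorem~\ref{thm:no_linear_selection}. Suppose one of these selections were given by a linear map $K$. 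The selection estimate makes $K$ bounded into $L^\infty$, and the equation $dKv=dv$ holds, which contradicts the first part.

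The only excluded case is the Besov endpoint $q=1$. There $\dot B^{\frac np}_{p,1}\subset L^\infty$, and the identity $u=v$ is a bounded linear selection. That case must therefore be stated as an exception, which the corollary does.

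The one step needing care is the meaning of "bounded linear selection" in the second assertion. Theorem~\ref{thm:no_linear_selection} requires a linear map defined on all of $X$ with values in $L^\infty$ modulo constants, whereas the corollaries give only existence of $u$. So the claim is that no choice $v\mapsto u$ can be linear. Under linearity, the norm estimate $\|u\|_{L^\infty}\lesssim\|v\|_X$ supplies boundedness, and this contradicts the theorem.

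I would also check that $L^\infty$ is read modulo constants, as in the homogeneous setting of the paper. This does not affect the argument, because $d$ annihilates constants.
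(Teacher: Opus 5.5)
Your proposal is correct and follows the paper's own route: the paper obtains the corollary by applying Theorem~\ref{thm:no_linear_selection} with $V=\Lambda^l$, $W=\Lambda^{l+1}$ and $M(D)=d$. The second assertion is the observation that a linear choice of any of the selections would be such a bounded operator. Your explicit check that the symbol $i\varepsilon_\xi$ is nonzero for $l\leq n-1$, and your treatment of the $q=1$ Besov exception, spell out points the paper leaves implicit.
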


\begin{rem}
	The restriction $q>1$ in the Besov statement is essential. At the endpoint index $q=1$, one has
	\begin{equation*}
		\dot B^{\frac np}_{p,1}\subset L^\infty,
	\end{equation*}
	so the identity map itself is a bounded linear selection.
\end{rem}

\appendix


\section{A one-dimensional model}
\label{appendix:one-dimensional-model}

The core arguments presented in this paper depend on the reflection symmetries of the scalar multipliers appearing in the trace-free Beurling--Ahlfors transform. The vector-valued structure of differential forms is in fact not needed for this basic mechanism. We illustrate this point in one dimension, where the Hodge projections are replaced by the positive- and negative-frequency Hardy projectors
\begin{equation*}
	\Pi_+=\mathds 1_{\{D>0\}},
	\qquad
	\Pi_-=\mathds 1_{\{D<0\}}.
\end{equation*}
All function spaces in this appendix are complex-valued, and all $L^2$ pairings are Hermitian. Let
\begin{equation*}
	H=-i\operatorname{sign}(D)
\end{equation*}
be the Hilbert transform. Then
\begin{equation*}
	\Pi_+=\frac12(\operatorname{Id}+iH),
	\qquad
	\Pi_-=\frac12(\operatorname{Id}-iH),
\end{equation*}
and
\begin{equation}\label{eq:one_dimensional_decomposition}
	\operatorname{Id}
	=
	2\Pi_-+iH.
\end{equation}
The multiplier $\operatorname{sign}(\xi)$ is homogeneous of degree zero and odd under the reflection $\xi\mapsto-\xi$. Hence Lemma~\ref{lemma:bound:oddness} gives
\begin{equation}\label{eq:one_dimensional_endpoint}
	\big\||D|^{-1}\operatorname{sign}(D)\mu\big\|_{L^\infty}
	\lesssim
	\|\mu\|_{\mathscr{M}}
\end{equation}
for every $\mu\in \mathscr M(\mathbb R)$. Moreover, the operator $|D|^{-1}\operatorname{sign}(D)$ maps $L^1$ into $C$ and $L^1_0$ into $C_0$. In this special case, the cancellation is also visible from
\begin{equation*}
	\frac d{dx}|D|^{-1}\operatorname{sign}(D)
	=
	i\operatorname{Id}.
\end{equation*}

\subsection{Hilbertian and Besov estimates}

We first record the Hilbertian one-dimensional estimate.

\begin{prop}[Hilbertian one-dimensional estimate]
\label{prop:toy_hilbertian}
	Let $f\in L^1_0(\mathbb R)$ and assume that $\Pi_-f\in\dot H^{-\frac12}(\mathbb R)$. Then $f\in\dot H^{-\frac12}(\mathbb R)$ and
	\begin{equation*}
		\|f\|_{\dot H^{-\frac12}}
		\lesssim
		\|f\|_{L^1_0}
		+
		\|\Pi_-f\|_{\dot H^{-\frac12}}.
	\end{equation*}
\end{prop}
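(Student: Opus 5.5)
The plan is to mimic the proof of Theorem~\ref{thm:hilbertian_estimate}, with $P$ replaced by $\Pi_-$, $P^\perp$ by $\Pi_+$, and $S$ by the odd multiplier $iH$ of \eqref{eq:one_dimensional_decomposition}. As in Section~\ref{section:hilbertian}, I first apply the annular regularization $Q_N$. Since $Q_N$ commutes with $\Pi_\pm$ and $H$ and is uniformly bounded on $L^1$ and $\dot H^{-\frac12}$, it suffices to prove the estimate for $f$ with Fourier support in a compact annulus. For such $f$ all quantities below are finite, and the general case follows by letting $N\to\infty$, using Fatou's lemma on the Fourier side.

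The key step is a Hermitian analogue of \eqref{eq:hilbertian_beurling_identity}. On the Fourier side, $iH$ has symbol $\operatorname{sign}(\xi)$, so by Plancherel
\begin{equation*}
	\|\Pi_+f\|_{\dot H^{-\frac12}}^2-\|\Pi_-f\|_{\dot H^{-\frac12}}^2
	=\frac1{2\pi}\int_{\mathbb R}\operatorname{sign}(\xi)|\xi|^{-1}|\widehat f(\xi)|^2d\xi
	=\int_{\mathbb R}f\,\overline{|D|^{-1}\operatorname{sign}(D)f}\,dx.
\end{equation*}
I then bound the right-hand side by
\begin{equation*}
	\|f\|_{L^1}\,\big\||D|^{-1}\operatorname{sign}(D)f\big\|_{L^\infty}
	\lesssim\|f\|_{L^1_0}^2,
\end{equation*}
using the endpoint estimate \eqref{eq:one_dimensional_endpoint}, which is Lemma~\ref{lemma:bound:oddness} with $n=1$ and $\sigma=1$, applied to the real and imaginary parts of $f$. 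The pairing is legitimate: $f\in L^1$, and $|D|^{-1}\operatorname{sign}(D)f$ is bounded and continuous.

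This gives
\begin{equation*}
	\|\Pi_+f\|_{\dot H^{-\frac12}}^2\lesssim\|f\|_{L^1_0}^2+\|\Pi_-f\|_{\dot H^{-\frac12}}^2.
\end{equation*}
Since $\Pi_+$ and $\Pi_-$ have disjoint Fourier supports, they give an orthogonal splitting in $\dot H^{-\frac12}$:
\begin{equation*}
	\|f\|_{\dot H^{-\frac12}}^2=\|\Pi_+f\|_{\dot H^{-\frac12}}^2+\|\Pi_-f\|_{\dot H^{-\frac12}}^2.
\end{equation*}
Combining the two displays yields the claim.

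The only delicate point is justifying the pairing identity for general $f$, that is, the regularization step. The uniform $L^1$ bound on $Q_N$ handles this, so I expect no real obstacle beyond that bookkeeping.
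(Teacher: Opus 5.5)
Your proposal is correct and follows the paper's proof essentially step for step. It uses the same four ingredients: annular regularization, the Plancherel identity $\|\Pi_+f\|_{\dot H^{-\frac12}}^2-\|\Pi_-f\|_{\dot H^{-\frac12}}^2=\langle f,|D|^{-1}\operatorname{sign}(D)f\rangle$, the endpoint bound \eqref{eq:one_dimensional_endpoint}, and the orthogonal splitting of $\dot H^{-\frac12}$ into positive and negative frequencies. Your extra details (treating real and imaginary parts separately, and using Fatou's lemma for $N\to\infty$) simply spell out what the paper summarizes as ``approximation.''
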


\begin{proof}
	Using the annular regularization from Section~\ref{section:hilbertian}, we may first suppose that the Fourier transform of $f$ is supported in a compact annulus and $f\in\mathscr S_0$. Since $\Pi_+$ and $\Pi_-$ are orthogonal projections,
	\begin{equation*}
		\|\Pi_+f\|_{\dot H^{-\frac12}}^2
		-
		\|\Pi_-f\|_{\dot H^{-\frac12}}^2
		=
		\big\langle
		f,
		|D|^{-1}\operatorname{sign}(D)f
		\big\rangle.
	\end{equation*}
	Therefore, by \eqref{eq:one_dimensional_endpoint},
	\begin{equation*}
		\|\Pi_+f\|_{\dot H^{-\frac12}}^2
		\lesssim
		\|\Pi_-f\|_{\dot H^{-\frac12}}^2
		+\|f\|_{L^1_0}^2.
	\end{equation*}
	Since
	\begin{equation*}
		\|f\|_{\dot H^{-\frac12}}^2
		=
		\|\Pi_+f\|_{\dot H^{-\frac12}}^2
		+
		\|\Pi_-f\|_{\dot H^{-\frac12}}^2,
	\end{equation*}
	the estimate follows. The general case is obtained by approximation.
\end{proof}

The corresponding selection statement follows from the same Hahn--Banach
argument as Proposition~\ref{prop:duality}, with $\Pi_-$ and $\Pi_+$ in
place of $P$ and $P^\perp$.

\begin{cor}[Hilbertian positive-frequency selection]
\label{cor:toy_hilbertian_selection}
	For every $g\in\dot H^{\frac12}(\mathbb R)$, there exists
	$h\in(\dot H^{\frac12}\cap L^\infty)(\mathbb R)$ such that
	\begin{equation*}
		\Pi_+h=\Pi_+g
	\end{equation*}
	and
	\begin{equation*}
		\|h\|_{\dot H^{\frac12}\cap L^\infty}
		\lesssim
		\|g\|_{\dot H^{\frac12}}.
	\end{equation*}
	In particular,
	\begin{equation*}
		\Pi_+\big[\dot H^{\frac12}(\mathbb R)\big]
		=
		\Pi_+\big[(\dot H^{\frac12}\cap L^\infty)(\mathbb R)\big]
	\end{equation*}
	with equivalent norms.
\end{cor}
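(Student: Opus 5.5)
The plan is to run the Hahn--Banach argument of Proposition~\ref{prop:duality} in the scalar complex setting, with the roles $P\leftrightarrow\Pi_-$ and $P^\perp\leftrightarrow\Pi_+$. Set $X=\dot H^{-\frac12}(\mathbb R)$ and $Y=\dot H^{\frac12}(\mathbb R)$, so that $X=Y'$ under the Hermitian pairing, and $(X+L^1_0)'=Y\cap L^\infty$ as in \eqref{eq:dual-sum-intersection}. Consider the closed subspace
\begin{equation*}
	\mathcal E=\{\eta\in X:\Pi_+\eta=\eta\}
\end{equation*}
of positive-frequency elements of $X$.

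The first step is the analogue of assertion \ref{estimate:equiv4}: every $\eta\in(X+L^1_0)$ with $\Pi_+\eta=\eta$ lies in $X$, with $\|\eta\|_X\lesssim\|\eta\|_{X+L^1_0}$. Given a decomposition $\eta=\eta_0+\eta_1$ with $\eta_0\in X$ and $\eta_1\in L^1_0$, we have
\begin{equation*}
	\Pi_-\eta_1=-\Pi_-\eta_0\in X,
\end{equation*}
since $\Pi_-\eta=0$ and $\Pi_-$ is bounded on $X$. Proposition~\ref{prop:toy_hilbertian} then gives
\begin{equation*}
	\|\eta_1\|_X\lesssim\|\eta_1\|_{L^1_0}+\|\eta_0\|_X .
\end{equation*}
Hence $\|\eta\|_X\lesssim\|\eta_0\|_X+\|\eta_1\|_{L^1_0}$, and taking the infimum over decompositions proves the claim.

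Next, for $g\in Y$, define $\mathcal L_g(\eta)=\langle\eta,g\rangle_{X,Y}$ on $\mathcal E$. By the first step,
\begin{equation*}
	|\mathcal L_g(\eta)|\lesssim\|\eta\|_{X+L^1_0}\|g\|_Y .
\end{equation*}
The complex Hahn--Banach theorem extends $\mathcal L_g$ to a bounded functional on $X+L^1_0$ with comparable norm. This extension is represented by some $h\in Y\cap L^\infty$ with $\|h\|_{Y\cap L^\infty}\lesssim\|g\|_Y$, and $\langle\eta,g-h\rangle=0$ for all $\eta\in\mathcal E$. Testing with $\eta=\Pi_+\phi$ for $\phi\in\mathscr S_0$ and using the self-adjointness of $\Pi_+$ gives $\langle\phi,\Pi_+(g-h)\rangle=0$. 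Therefore $\Pi_+h=\Pi_+g$ in $\mathscr S_0'$, which is the selection statement.

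The final equality of ranges with equivalent norms follows immediately. The inclusion $\supset$ is trivial. For the norm equivalence, the quotient norm of $\Pi_+g$ over bounded primitives is at most $\|h\|$, which is $\lesssim\|g\|_Y$ for every $g$ with the same $\Pi_+g$; taking the infimum over such $g$ gives one direction, and the other is trivial.

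The only delicate point I expect is the duality identification $(X+L^1_0)'=Y\cap L^\infty$ in the complex Hermitian setting, with $L^\infty$ taken modulo constants. This is handled exactly as in the real case of Section~\ref{sec:duality}, using the density of $\mathscr S_0$ in $Y$ and in $L^1_0$.
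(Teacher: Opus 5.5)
Your proposal is correct and follows the paper's own argument: the paper proves this corollary by running the Hahn--Banach argument of Proposition~\ref{prop:duality} with $\Pi_-$ and $\Pi_+$ in place of $P$ and $P^\perp$, and Proposition~\ref{prop:toy_hilbertian} plays the role of assertion \ref{estimate:equiv3}. You have written out the steps the paper leaves implicit: the passage to the estimate for positive-frequency elements of $X+L^1_0$, the extension and its representation in $Y\cap L^\infty$, and the identification $\Pi_+h=\Pi_+g$ via the self-adjointness of $\Pi_+$ for the Hermitian pairing.
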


There is also a dyadic refinement.

\begin{prop}[Besov one-dimensional estimate]
\label{prop:toy_besov}
	Let $2\leq r\leq\infty$. If $f\in L^1_0(\mathbb R)$ and $\Pi_-f\in\dot B^{-\frac12}_{2,r}(\mathbb R)$, then $f\in\dot B^{-\frac12}_{2,r}(\mathbb R)$ and
	\begin{equation*}
		\|f\|_{\dot B^{-\frac12}_{2,r}}
		\lesssim
		\|f\|_{L^1_0}
		+
		\|\Pi_-f\|_{\dot B^{-\frac12}_{2,r}}.
	\end{equation*}
\end{prop}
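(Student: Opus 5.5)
The plan is to mirror the proof of Theorem~\ref{thm:hilbertian_estimate_Besov}, with the Hardy projectors $\Pi_+,\Pi_-$ in place of $P^\perp,P$ and the multiplier $\operatorname{sign}(D)$ in place of $S$. The two endpoint cases are immediate. The case $r=2$ is Proposition~\ref{prop:toy_hilbertian}, since $\dot B^{-\frac12}_{2,2}=\dot H^{-\frac12}$. The case $r=\infty$ follows from the embedding $L^1(\mathbb R)\subset\dot B^{-\frac12}_{2,\infty}(\mathbb R)$, which holds because $\|\Delta_jf\|_{L^2}\lesssim 2^{j/2}\|f\|_{L^1}$. For $2<r<\infty$, we first use the annular regularization $Q_N$ to reduce to $f\in\mathscr S_0$ with compact Fourier support away from the origin. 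We then pass to the limit $N\to\infty$, using that all bounds are uniform and that $Q_N$ commutes with $\Pi_\pm$ and the dyadic blocks.

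Set
\begin{equation*}
a_j=\|\Delta_j\Pi_+f\|_{\dot H^{-\frac12}},\qquad b_j=\|\Delta_j\Pi_-f\|_{\dot H^{-\frac12}},
\end{equation*}
so that $\|\Pi_\pm f\|_{\dot B^{-\frac12}_{2,r}}$ is comparable to the $\ell^r$ norms of $(a_j)$ and $(b_j)$. We apply the Hilbertian identity from the proof of Proposition~\ref{prop:toy_hilbertian} to $\Delta_jf$. Since $\Delta_j$ is self-adjoint and commutes with everything in sight, this gives
\begin{equation*}
a_j^2-b_j^2=\big\langle f,\Delta_j^2|D|^{-1}\operatorname{sign}(D)f\big\rangle .
\end{equation*}
Hence $a_j^2\le b_j^2+|c_j|$, where $c_j$ denotes the right-hand side. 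Raising to the power $r/2>1$ and summing over $j$, we use $(x+y)^{r/2}\lesssim x^{r/2}+y^{r/2}$ together with $\|(c_j)\|_{\ell^{r/2}}\le\|(c_j)\|_{\ell^1}$. This yields
\begin{equation*}
\|(a_j)\|_{\ell^r}^2\lesssim\|(b_j)\|_{\ell^r}^2+\sum_j|c_j|.
\end{equation*}

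The key step, and the only one where the cancellation enters, is bounding $\sum_j|c_j|$. Moving the absolute value inside the pairing gives
\begin{equation*}
\sum_j|c_j|\le\|f\|_{L^1_0}\Big\|\sum_j\big|\Delta_j^2|D|^{-1}\operatorname{sign}(D)f\big|\Big\|_{L^\infty}.
\end{equation*}
Corollary~\ref{scholium:bound_m}, in its $\Delta_j^2$ version, applies with $n=1$ and $m=\operatorname{sign}$. This multiplier is smooth on $\mathbb R\setminus\{0\}$, homogeneous of degree zero, and odd under $\rho_\sigma\xi=-\xi$. The corollary bounds the last factor by $\|f\|_{L^1}$, so $\sum_j|c_j|\lesssim\|f\|_{L^1_0}^2$. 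Some care is needed with the complex-valued setting, since the pairings are Hermitian. The Hilbertian identity still holds with the real quantities $a_j^2,b_j^2$. Lemma~\ref{lemma:bound:oddness} and its corollary extend to complex $f$ by splitting into real and imaginary parts.

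Finally, combining these estimates gives $\|\Pi_+f\|_{\dot B^{-\frac12}_{2,r}}\lesssim\|f\|_{L^1_0}+\|\Pi_-f\|_{\dot B^{-\frac12}_{2,r}}$. Since $f=\Pi_+f+\Pi_-f$, the triangle inequality then yields the stated estimate. The membership $f\in\dot B^{-\frac12}_{2,r}$ in the general case follows from the uniform bound for $Q_Nf$ by weak-star compactness, using that $Q_Nf\to f$ in $\mathscr S_0'$.
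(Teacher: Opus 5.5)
Your proposal is correct and follows essentially the same route as the paper. Both arguments use the dyadically localized identity $a_j^2-b_j^2=\langle f,\Delta_j^2|D|^{-1}\operatorname{sign}(D)f\rangle$, sum the error terms through the $\Delta_j^2$ version of Corollary~\ref{scholium:bound_m}, pass from $\ell^1$ to $\ell^{r/2}$, and treat $r=\infty$ via the embedding $L^1\subset\dot B^{-\frac12}_{2,\infty}$. Your separate handling of $r=2$ and your remarks on complex-valued data and on the limit $N\to\infty$ are harmless additions that the paper covers implicitly.
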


\begin{proof}
	We again argue first after annular regularization and assume $f\in\mathscr S_0$. Set
	\begin{equation*}
		a_j
		=
		\|\Delta_j\Pi_+f\|_{\dot H^{-\frac12}},
		\qquad
		b_j
		=
		\|\Delta_j\Pi_-f\|_{\dot H^{-\frac12}}.
	\end{equation*}
	The localized Hilbertian identity gives
	\begin{equation*}
		a_j^2-b_j^2
		=
		\left\langle
		f,\Delta_j^2|D|^{-1}\operatorname{sign}(D)f
		\right\rangle.
	\end{equation*}
	Hence
	\begin{equation*}
		a_j^2
		\leq
		b_j^2+
		\int_{\mathbb R}
		|f|
		\big|\Delta_j^2|D|^{-1}\operatorname{sign}(D)f\big|dx.
	\end{equation*}
	By the dyadic form of the reflection estimate found in Corollary~\ref{scholium:bound_m}, we deduce
	\begin{equation*}
		\sum_{j\in\mathbb Z}
		\int_{\mathbb R}
		|f|
		\big|\Delta_j^2|D|^{-1}\operatorname{sign}(D)f\big|dx
		\leq
		\|f\|_{L^1_0}
		\bigg\|
		\sum_{j\in\mathbb Z}
		\big|\Delta_j^2|D|^{-1}\operatorname{sign}(D)f\big|
		\bigg\|_{C_0}
		\lesssim
		\|f\|_{L^1_0}^2.
	\end{equation*}
	As before, the case $r=\infty$ follows directly from the critical embedding $L^1\subset \dot B^{-\frac 12}_{2,\infty}$.
	If $r<\infty$, then
	\begin{equation*}
		\begin{aligned}
		\|(a_j)\|_{\ell^r}^2
		&=
		\|(a_j^2)\|_{\ell^{r/2}}
		\\
		&\lesssim
		\|(b_j^2)\|_{\ell^{r/2}}
		+
		\|f\|_{L^1_0}^2
		\\
		&\leq
		\|(b_j)\|_{\ell^r}^2
		+
		\|f\|_{L^1_0}^2.
		\end{aligned}
	\end{equation*}
	This proves the result by a density argument.
\end{proof}

\begin{cor}[Besov positive-frequency selection]
\label{cor:toy_besov_selection}
	Let $1\leq q\leq2$. For every $g\in\dot B^{\frac12}_{2,q}(\mathbb R)$, there exists $h\in(\dot B^{\frac12}_{2,q}\cap L^\infty)(\mathbb R)$ such that
	\begin{equation*}
		\Pi_+h=\Pi_+g
	\end{equation*}
	and
	\begin{equation*}
		\|h\|_{\dot B^{\frac12}_{2,q}\cap L^\infty}
		\lesssim
		\|g\|_{\dot B^{\frac12}_{2,q}}.
	\end{equation*}
\end{cor}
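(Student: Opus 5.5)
The plan is to follow the proofs of Corollary~\ref{cor:hilbertian_Besov_solution} and Corollary~\ref{cor:toy_hilbertian_selection}: apply the Hahn--Banach duality argument of Proposition~\ref{prop:duality}, with $\Pi_-$ playing the role of $P$ and $\Pi_+$ the role of $P^\perp$. The endpoint case $q=1$ needs no duality. The critical embedding $\dot B^{\frac12}_{2,1}(\mathbb R)\subset L^\infty(\mathbb R)$ lets us take $h=g$.

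For $1<q\leq2$, set $r=q'\in[2,\infty)$, $Y=\dot B^{\frac12}_{2,q}$ and $X=Y'=\dot B^{-\frac12}_{2,r}$. Since $1<q<\infty$, $Y$ is reflexive and $\mathscr S_0$ is dense in $Y$. The duality identity \eqref{eq:dual-sum-intersection} therefore gives $(X+L^1_0)'=Y\cap L^\infty$, where $L^\infty$ is taken modulo constants. This identity is purely functional-analytic and holds verbatim in one dimension for complex-valued functions.

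The first real step is the analogue of assertion~\ref{estimate:equiv4}. Let $\eta\in X+L^1_0$ with $\Pi_+\eta=\eta$, and write $\eta=\eta_0+\eta_1$ with $\eta_0\in X$ and $\eta_1\in L^1_0$. Then $\Pi_-\eta_1=-\Pi_-\eta_0\in X$, with norm bounded by $\|\eta_0\|_X$, because $\Pi_-$ is a bounded Fourier multiplier on $\dot B^{-\frac12}_{2,r}$ (it acts isometrically on each $L^2$ dyadic block). Proposition~\ref{prop:toy_besov} applied to $f=\eta_1$ then gives
\begin{equation*}
	\|\eta_1\|_X\lesssim\|\eta_1\|_{L^1_0}+\|\eta_0\|_X.
\end{equation*}
Taking the infimum over all decompositions yields $\|\eta\|_X\lesssim\|\eta\|_{X+L^1_0}$.

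Next, fix $g\in Y$ and consider the functional $\mathcal L_g(\eta)=\langle\eta,g\rangle_{X,Y}$ on the closed subspace $\mathcal E=\{\eta\in X:\Pi_+\eta=\eta\}$. By the previous step it is bounded with respect to the $X+L^1_0$ norm, with bound $\lesssim\|g\|_Y$. Hahn--Banach (the complex version, since the pairings are Hermitian) extends $\mathcal L_g$ to $X+L^1_0$. By the duality identity, the extension is represented by some $h\in Y\cap L^\infty$ with $\|h\|_{Y\cap L^\infty}\lesssim\|g\|_Y$.

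Finally, for every $\phi\in\mathscr S_0$ we have $\Pi_+\phi\in\mathcal E$. The self-adjointness of $\Pi_+$ then gives $\langle\phi,\Pi_+(g-h)\rangle=\langle\Pi_+\phi,g-h\rangle=0$. Hence $\Pi_+h=\Pi_+g$ in $\mathscr S_0'$.

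The only point requiring care, and the one I would check first, is the complex-valued setting. The pairing must be taken sesquilinear so that $\Pi_\pm$ are self-adjoint, and the identification $(X+L^1_0)'=Y\cap L^\infty$ must hold for complex scalars. Both are standard once one works componentwise with real and imaginary parts, so no genuine obstacle is expected.
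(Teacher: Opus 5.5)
Your proposal is correct and follows the paper's proof. The paper also takes $h=g$ for $q=1$ via $\dot B^{\frac12}_{2,1}\subset L^\infty$, and for $1<q\leq2$ runs the Hahn--Banach duality argument of Proposition~\ref{prop:duality} on Proposition~\ref{prop:toy_besov} with $r=q'$. You have just written out the intermediate coexact-type estimate and the representation step that the paper leaves implicit. One minor correction: $\Pi_-$ is a contraction on each dyadic $L^2$ block, not an isometry, and the contraction bound is all your argument needs.
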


\begin{proof}
	If $1<q\leq2$, apply the same duality argument as above to Proposition~\ref{prop:toy_besov}, with $r=q'$. If $q=1$, the conclusion is immediate from the critical embedding $\dot B^{\frac12}_{2,1}\subset L^\infty$, by taking $h=g$.
\end{proof}

\subsection{Multilinear extensions}

The multilinear reflection argument of Section~\ref{section:multilinear} applies directly to the odd multiplier $\operatorname{sign}(D)$. Combining \eqref{eq:one_dimensional_decomposition} with the proofs of the Besov and Triebel--Lizorkin estimates gives the following one-dimensional analogues of the main results.

\begin{cor}[Non-Hilbertian positive-frequency selections]
\label{cor:toy_multilinear}
	The following statements hold.
	\begin{enumerate}
		\item
		Let $1<p\leq2$ and $1\leq q\leq p$, and assume that $p'$ is an
		even integer. Then
		\begin{equation*}
			\Pi_+\big[\dot B^{\frac1p}_{p,q}(\mathbb R)\big]
			=
			\Pi_+\big[(\dot B^{\frac1p}_{p,q}\cap L^\infty)(\mathbb R)\big]
		\end{equation*}
		with equivalent norms.
		
		\item
		Let $1<p\leq q\leq2$, and assume that $q'$ is even and that
		$p'/q'\in\mathbb N$. Then
		\begin{equation*}
			\Pi_+\big[\dot F^{\frac1p}_{p,q}(\mathbb R)\big]
			=
			\Pi_+\big[(\dot F^{\frac1p}_{p,q}\cap L^\infty)(\mathbb R)\big]
		\end{equation*}
		with equivalent norms.
	\end{enumerate}
\end{cor}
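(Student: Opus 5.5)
We follow the structure of Section~\ref{section:multilinear}, replacing $P$, $P^\perp$ and $S$ by $\Pi_-$, $\Pi_+$ and $iH$. The identity \eqref{eq:one_dimensional_decomposition} now plays the role of \eqref{eq:recover_identity_from_S}. The plan is to prove the two dual estimates and then pass to the selection statements by the Hahn--Banach argument of Proposition~\ref{prop:duality}.

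For $f\in L^1_0(\mathbb R)$ with $\Pi_-f$ in the relevant space, the two dual estimates are the following.
\begin{itemize}
\item[(i)] For $2\le r\le t\le\infty$ with $r$ even,
\begin{equation*}
\|f\|_{\dot B^{s_r}_{r,t}}\lesssim\|f\|_{L^1_0}+\|\Pi_-f\|_{\dot B^{s_r}_{r,t}}.
\end{equation*}
\item[(ii)] For $2\le t\le r<\infty$ with $t$ even and $r/t\in\mathbb N$, the same estimate in $\dot F^{s_r}_{r,t}$.
\end{itemize}
Here $s_r=-(1-\frac1r)$ with $n=1$. After annular regularization we may assume $f\in\mathscr S_0$.

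There is one difference from the real-valued setting: $f$ is complex-valued, so $(\Delta_jf)^{r}$ is not $|\Delta_jf|^r$. We therefore expand $|\Delta_jf|^r=(\Delta_jf)^{r/2}(\overline{\Delta_jf})^{r/2}$, and similarly in the Triebel--Lizorkin product. We substitute $\Delta_jf=2\Delta_j\Pi_-f+i\Delta_jHf$ in one factor $\Delta_{j_1}f$, not in a conjugated one. The $\Pi_-$ term is bounded, exactly as in the proofs of Theorems~\ref{thm:estimate_Besov_multilinear} and~\ref{thm:estimate_TL_multilinear}, by H\"older's inequality in the sequence variable: it is at most $\|\Pi_-f\|\,\|f\|^{r-1}$ in the relevant norm. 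The $H$ term is a multilinear pairing of the form \eqref{estimate:multilinear}, with $m(\xi)=-i\operatorname{sign}(\xi)$ and $n=1$. The remaining factors are $\Delta_{j_k}f$ and $\overline{\Delta_{j_k}f}=\Delta_{j_k}\bar f$; the latter identity uses that $\varphi$ is real and even.

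The main point to check is that Lemma~\ref{lemma:multilinear_reflection} applies when the inputs $g$ are replaced by an assortment of $f$'s and $\bar f$'s, that is, in its multilinear rather than diagonal form. Its proof already gives this. The kernel bound \eqref{eq:multiscale_kernel_sum} is uniform in the points $z_s$ and does not depend on which function sits in each slot. The symbol is still odd under $\xi\mapsto-\xi$, applied simultaneously to all variables. Fubini's theorem then gives a bound by $\|f\|_{L^1}\prod_s\|g_s\|_{L^1}=\|f\|_{L^1}^r$. The oddness of complex conjugation, $\widehat{\bar f}(\xi)=\overline{\widehat f(-\xi)}$, causes no trouble: after conjugation the symbols become $\varphi(\pm\eta_s)=\varphi(\eta_s)$ by radiality. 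I expect this complexification step to be the only real obstacle, and I would handle it by restating the lemma for $L$ possibly different functions $g_1,\dots,g_L\in L^1$.

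Combining the two bounds gives $\|f\|^r\lesssim\|f\|_{L^1_0}^r+\|\Pi_-f\|\,\|f\|^{r-1}$ in the relevant norm. Young's inequality closes the estimate, and density removes the regularization. The trivial cases $t=\infty$ follow from $L^1\subset\dot B^{s_r}_{r,\infty}$.

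Finally, the duality argument of Proposition~\ref{prop:duality} carries over with $\Pi_-$ in place of $P$ and $\Pi_+$ in place of $P^\perp$. We use Hermitian pairings and the complex Hahn--Banach theorem, together with $(X+L^1_0)'=Y\cap L^\infty$. This turns estimate (i), with $r=p'$ and $t=q'$ for $1<q\le p$, into part~(1). The endpoint $q=1$ follows from $\dot B^{1/p}_{p,1}\subset L^\infty$ by taking $h=g$. Estimate (ii), with $r=p'$ and $t=q'$, gives part~(2).
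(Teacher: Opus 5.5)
Your proposal is correct and follows the paper's route: transplant the Besov and Triebel--Lizorkin arguments of Section~\ref{section:multilinear} with $n=1$, with $\Pi_-$ in place of $P$ and \eqref{eq:one_dimensional_decomposition} in place of \eqref{eq:recover_identity_from_S}, then pass to the selection statements through the Hahn--Banach duality argument. You also handle complex values explicitly, writing $|\Delta_jf|^r=(\Delta_jf)^{r/2}(\Delta_j\bar f)^{r/2}$ and using the several-function form of Lemma~\ref{lemma:multilinear_reflection}, which its proof does give; this supplies a detail that the paper's ``verbatim'' leaves implicit, and one could equally treat $\operatorname{Re}f$ and $\operatorname{Im}f$ as two real components, as in the coefficientwise Hodge argument, so that the lemma applies exactly as stated.
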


\begin{proof}
	The scalar multiplier $\operatorname{sign}(\xi)$ satisfies the reflection hypothesis of Lemma~\ref{lemma:multilinear_reflection}. The arguments in the Besov and Triebel--Lizorkin parts of Section~\ref{section:multilinear} therefore apply verbatim, with $n=1$, the Hardy projector $\Pi_-$ in place of $P$, and \eqref{eq:one_dimensional_decomposition} in place of \eqref{eq:recover_identity_from_S}. The selection statements follow by the same duality argument as previously.
\end{proof}

\subsection{Holomorphic interpretation}

The positive-frequency Hardy projection $\Pi_+$ identifies the boundary values of holomorphic functions on the upper half-plane. More precisely, consider a holomorphic function $F(x+iy)$ in the upper half-plane and assume that its trace is formally well-defined. On the real line, it holds
\begin{equation*}
	F(x)=\frac 12\big(f(x)+iHf(x)\big)=\Pi_+f(x).
\end{equation*}
The choice of $f$ is not unique, since its negative-frequency component does not contribute to $F$. If $f$ is required to be real-valued, however, then it is uniquely determined by $f=2\operatorname{Re}F$.

The Dirichlet space consists of holomorphic functions on the upper half-plane with a square-integrable derivative. Through boundary traces, it is isomorphically identified with
\begin{equation*}
	\Pi_+\big[\dot H^{\frac12}(\mathbb R)\big],
\end{equation*}
with equivalent norms.

Corollary~\ref{cor:toy_hilbertian_selection} gives the alternative representation of the Dirichlet space
\begin{equation*}
	\Pi_+\big[\dot H^{\frac12}(\mathbb R)\big]
	=
	\Pi_+\big[(\dot H^{\frac12}\cap L^\infty)(\mathbb R)\big].
\end{equation*}
Thus every Dirichlet boundary value admits a bounded critical-Sobolev preimage under $\Pi_+$. We refer to \cite{arcozzi:rochberg:sawyer:wick:2019} for background on the Dirichlet space, and to \cite{dalio:riviere:wettstein:2021} for a related Bourgain--Brezis phenomenon in a holomorphic setting.

The same statement yields a refinement of the Fefferman--Stein decomposition on the critical Sobolev subspace.

\begin{cor}\label{cor:toy_fefferman_stein}
	Every real-valued $f\in\dot H^{\frac12}(\mathbb R)$ can be written as
	\begin{equation*}
		f=a-Hb,
	\end{equation*}
	for some real-valued $a,b\in(\dot H^{\frac12}\cap L^\infty)(\mathbb R)$ with
	\begin{equation*}
		\|a\|_{\dot H^{\frac12}\cap L^\infty}
		+
		\|b\|_{\dot H^{\frac12}\cap L^\infty}
		\lesssim
		\|f\|_{\dot H^{\frac12}}.
	\end{equation*}
	Consequently,
	\begin{equation*}
		\dot H^{\frac12}(\mathbb R)
		=
		(\dot H^{\frac12}\cap L^\infty)(\mathbb R)
		+
		H(\dot H^{\frac12}\cap L^\infty)(\mathbb R)
	\end{equation*}
	with equivalent induced norms.
\end{cor}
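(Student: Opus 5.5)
The plan is to obtain this decomposition from Corollary~\ref{cor:toy_hilbertian_selection} applied to the real-valued datum $g=f$, by taking real parts of the resulting bounded preimage. Let $f\in\dot H^{\frac12}(\mathbb R)$ be real-valued. Corollary~\ref{cor:toy_hilbertian_selection} provides a complex-valued $h\in(\dot H^{\frac12}\cap L^\infty)(\mathbb R)$ with $\Pi_+h=\Pi_+f$ and $\|h\|_{\dot H^{\frac12}\cap L^\infty}\lesssim\|f\|_{\dot H^{\frac12}}$. Write $h=a+ib$ with $a,b$ real-valued. Since real and imaginary parts are pointwise bounded by $|h|$, and $\dot H^{\frac12}$ is defined by a real Fourier multiplier, the norms of $a$ and $b$ in $\dot H^{\frac12}\cap L^\infty$ are both bounded by $\|h\|_{\dot H^{\frac12}\cap L^\infty}$. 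This gives the stated norm bound.

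The key step is to recover $f$ from $\Pi_+f$ using real-valuedness. By $\Pi_+=\frac12(\operatorname{Id}+iH)$ and the fact that $H$ maps real functions to real functions, one has $2\operatorname{Re}\Pi_+u=u$ for every real-valued $u$. This is the identity $f=2\operatorname{Re}F$ recorded just before the statement. For complex $h=a+ib$, the same formula gives
\begin{equation*}
	2\Pi_+h=(a-Hb)+i(Ha+b),
\end{equation*}
and hence
\begin{equation*}
	2\operatorname{Re}\Pi_+h=a-Hb.
\end{equation*}
Taking real parts of $2\Pi_+h=2\Pi_+f$ then yields $f=a-Hb$, as required.

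Finally, the equality of spaces follows. The inclusion $\supset$ holds because $H$ is bounded on $\dot H^{\frac12}$, being a unimodular multiplier. The inclusion $\subset$, together with the norm equivalence, is the decomposition just proved.

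The main technical point is that all identities are taken in $\mathscr S_0'$ modulo polynomials, so the real-part operation must be justified there. Here $f\in\dot H^{\frac12}$ is a genuine $L^2$-type class modulo constants, and the real part of an element of $\mathscr S_0'$ is defined by complex conjugation, which commutes with $H$ and with $\Pi_\pm$ up to exchanging them. I therefore expect no real obstacle beyond stating this carefully: conjugation maps $\Pi_+h$ to $\Pi_-\bar h$, and so the real part of $\Pi_+h$ equals $\frac12(\Pi_+h+\Pi_-\bar h)$, which computes to $\frac12(a-Hb)$ as above.
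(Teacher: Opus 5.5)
Your proposal is correct and follows the paper's proof essentially verbatim. The paper likewise applies Corollary~\ref{cor:toy_hilbertian_selection} to $f$, writes $h=a+ib$, and takes real parts of $\Pi_+f=\Pi_+h$ to obtain $f=a-Hb$, with the norm bound inherited from the bound on $h$. Your extra remarks on the conjugation identity $\overline{\Pi_+h}=\Pi_-\bar h$ and on the reverse inclusion via the boundedness of $H$ on $\dot H^{\frac12}$ are correct supplementary details.
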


\begin{proof}
	Apply Corollary~\ref{cor:toy_hilbertian_selection} to $f$ and write the resulting bounded function as $h=a+ib$, with $a$ and $b$ real-valued.
	The identity $\Pi_+f=\Pi_+h$ becomes
	\begin{equation*}
		\frac12(f+iHf)
		=
		\frac12\big(a-Hb+i(b+Ha)\big).
	\end{equation*}
	Taking real parts gives $f=a-Hb$, and the estimate follows from the
	bound on $h$.
\end{proof}

The Fefferman--Stein \cite{fefferman:stein:1972} and Uchiyama \cite{uchiyama:1982} decomposition of functions of bounded mean oscillation states that
\begin{equation*}
	BMO(\mathbb{R})=L^\infty(\mathbb{R}) + HL^\infty(\mathbb{R}).
\end{equation*}
The previous result provides a refinement of this decomposition to the subspace $\dot H^\frac 12\subset BMO$.

As noted in \cite{bourgain:brezis:2003}, a similar application of the Bourgain--Brezis selection principle for Hodge systems leads to subtle improvements of the Fefferman--Stein characterization of $BMO$ in higher dimensions.


\section{Further properties of the trace-free Beurling--Ahlfors transform}
\label{appendix:further_properties}

We conclude by recording two supplementary properties of the trace-free Beurling--Ahlfors transform. The first shows that its endpoint cancellation persists after composition with a Riesz transform. The second describes the density of its endpoint image in $C_0\Lambda^l$.

\subsection{Riesz transforms of the endpoint image}

\begin{prop}[Endpoint bounds for $R_kS$]
\label{prop:riesz_endpoint_S}
	Let $n\geq2$, $1\leq l\leq n-1$, and $1\leq k\leq n$. Then
	\begin{equation*}
		|D|^{-n}R_kS:
		\mathscr M\Lambda^l
		\rightarrow
		L^\infty\Lambda^l
	\end{equation*}
	is bounded. Moreover,
	\begin{equation*}
		|D|^{-n}R_kS[L^1\Lambda^l]\subset C\Lambda^l,
		\qquad
		|D|^{-n}R_kS[L^1_0\Lambda^l]\subset C_0\Lambda^l.
	\end{equation*}
	The corresponding dyadic estimate also holds:
	\begin{equation*}
		\Big\|
		\sum_{j\in\mathbb Z}
		\big|\Delta_j|D|^{-n}R_kS\mu\big|
		\Big\|_{L^\infty}
		\lesssim
		\|\mu\|_{\mathscr M\Lambda^l}.
	\end{equation*}
	If $\mu=\omega$ belongs to $L^1\Lambda^l$ or $L^1_0\Lambda^l$, then the function in the preceding inequality belongs to $C$ or $C_0$, respectively. The same conclusions hold with $\Delta_j^2$ in place of $\Delta_j$.
\end{prop}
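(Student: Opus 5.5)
The plan is to reduce everything to Lemma~\ref{lemma:bound:oddness} and Corollary~\ref{scholium:bound_m}, exactly as Proposition~\ref{prop:Beurling_transform} and Corollary~\ref{scholium:bound_S} were deduced. Those results require a \emph{real} homogeneous degree-zero symbol that is odd under a reflection. The symbol of $R_k$ is $i\xi_k/|\xi|$, so by Proposition~\ref{prop:beurling_multiplier_decomposition} every matrix coefficient of the symbol of $R_kS$ is $i$ times a finite linear combination of the scalar symbols
\begin{equation*}
	\frac{\xi_k}{|\xi|}\,m^\sharp_{ij}(\xi)=\frac{\xi_k(\xi_i^2-\xi_j^2)}{|\xi|^3},
	\qquad
	\frac{\xi_k}{|\xi|}\,m^\flat_{ij}(\xi)=\frac{\xi_k\xi_i\xi_j}{|\xi|^3},
	\qquad i\neq j .
\end{equation*}
The proof of Lemma~\ref{lemma:bound:oddness} never uses that $m$ is real. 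Its only ingredients are smoothness, homogeneity and the reflection oddness of the kernel, which passes from symbol to kernel for complex symbols as well. The harmless factor $i$ can therefore be carried along. It remains to check that each of these symbols, or a decomposition of it, is odd under some orthogonal reflection.

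For the $\flat$-type symbols $\xi_k\xi_i\xi_j/|\xi|^3$ there are two cases. If $k\notin\{i,j\}$, the reflection across $e_k^\perp$ flips the sign. If $k=i$, the symbol is $\xi_i^2\xi_j/|\xi|^3$, which is odd under reflection across $e_j^\perp$. The case $k=j$ is symmetric.

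The $\sharp$-type symbols $\xi_k(\xi_i^2-\xi_j^2)/|\xi|^3$ also split into cases. If $k\notin\{i,j\}$, reflection across $e_k^\perp$ works. The delicate case is $k=i$ (or $k=j$), giving $(\xi_i^3-\xi_i\xi_j^2)/|\xi|^3$. Here neither the coordinate reflection in $e_i$ helps (the symbol is odd in $\xi_i$, so that reflection does work!) — indeed, reflecting across $e_i^\perp$ sends $\xi_i\mapsto-\xi_i$ and leaves $\xi_j$ fixed, so the whole symbol changes sign. Thus in every case the factor $\xi_k$, or a single odd coordinate power, supplies the reflection. In short, every monomial-type cubic symbol above contains some coordinate to an odd power, and reflection across that coordinate hyperplane gives oddness.

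I expect this case check to be the only step requiring care. In fact it is simpler than anticipated, because all relevant symbols are odd of total degree three in $\xi$. Once it is done, Lemma~\ref{lemma:bound:oddness} applied coefficientwise gives the $\mathscr M\to L^\infty$ bound and the inclusions into $C$ and $C_0$. Corollary~\ref{scholium:bound_m} gives the dyadic $L^\infty(\ell^1)$ bound, its continuity and vanishing-at-infinity versions, and the $\Delta_j^2$ variants. Finally, the triangle inequality over the finitely many terms yields all assertions of the proposition.
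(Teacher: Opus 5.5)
Your proposal is correct and follows the paper's proof essentially verbatim. The paper also writes each coefficient of $R_kS$ as a combination of $i\xi_k(\xi_i^2-\xi_j^2)/|\xi|^3$ and $i\xi_k\xi_i\xi_j/|\xi|^3$, exhibits a coordinate reflection under which each symbol is odd, and applies Lemma~\ref{lemma:bound:oddness} and Corollary~\ref{scholium:bound_m} coefficientwise. The only difference is cosmetic: the paper uses the single reflection across $e_k^\perp$ for every $\sharp$-type symbol, so your ``delicate'' case $k=i$ (whose write-up is garbled) needs no separate treatment, and for $\flat$-type symbols with $k\neq i$ it reflects across $e_i^\perp$ rather than $e_k^\perp$.
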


\begin{proof}
	By Proposition~\ref{prop:beurling_multiplier_decomposition}, every matrix coefficient of $R_kS$ is a finite linear combination of the scalar multipliers
	\begin{equation*}
		\widetilde m_{ij}^{\sharp}(\xi)
		=
		\frac{i\xi_k(\xi_i^2-\xi_j^2)}{|\xi|^3},
		\qquad
		\widetilde m_{ij}^{\flat}(\xi)
		=
		\frac{i\xi_k\xi_i\xi_j}{|\xi|^3},
		\qquad
		i\neq j.
	\end{equation*}
	Both families are smooth away from the origin and homogeneous of degree zero. The multiplier $\widetilde m_{ij}^{\sharp}$ is odd under the reflection across $e_k^\perp$. For $\widetilde m_{ij}^{\flat}$, if $k\neq i$, reflection across $e_i^\perp$ changes its sign, whereas if $k=i$, reflection across $e_j^\perp$ changes its sign. Hence every scalar multiplier appearing in $R_kS$ satisfies the reflection hypothesis of Lemma~\ref{lemma:bound:oddness}.
	The measure estimate and the continuity conclusions therefore follow coefficientwise from Lemma~\ref{lemma:bound:oddness}. The dyadic statements follow in the same way from Corollary~\ref{scholium:bound_m}.
\end{proof}

In particular, if
\begin{equation*}
	u=|D|^{-n}S\omega,
	\qquad
	\omega\in L^1_0\Lambda^l,
\end{equation*}
then
\begin{equation*}
	u\in C_0\Lambda^l,
	\qquad
	R_ku\in C_0\Lambda^l,
	\quad
	k=1,\ldots,n.
\end{equation*}
Thus the endpoint image has additional Riesz-transform regularity. The next result shows that, despite this restriction, it is dense in $C_0\Lambda^l$.

\subsection{Density of the endpoint image}

\begin{cor}[Density of the range]
\label{cor:dense_range_S}
	The range of
	\begin{equation*}
		|D|^{-n}S:
		L^1_0\Lambda^l
		\rightarrow
		C_0\Lambda^l
	\end{equation*}
	is dense in $C_0\Lambda^l$ with respect to the uniform norm.
\end{cor}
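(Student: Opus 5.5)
The plan is to show that the range already contains the whole space $\mathscr S_0\Lambda^l$, and then that $\mathscr S_0\Lambda^l$ is uniformly dense in $C_0\Lambda^l$. This avoids any fine analysis of the endpoint operator and uses only two facts. The first is that $|D|^{-n}S$ acts as an isomorphism of $\mathscr S_0\Lambda^l$. The second is the Hahn--Banach/Riesz duality $(C_0)'=\mathscr M$.

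\emph{Step 1: $\mathscr S_0\Lambda^l$ lies in the range.} Given $\phi\in\mathscr S_0\Lambda^l$, set $\omega=|D|^{n}S^{-1}\phi$, with $S^{-1}=\frac n{n-l}P-\frac nlP^\perp$ as in Proposition~\ref{prop:beurling_algebra}. The matrix symbol $|\xi|^{n}S^{-1}(\xi)$ is smooth on $\mathbb R^n\setminus\{0\}$ and homogeneous of degree $n$. Meanwhile $\widehat\phi$ vanishes to infinite order at the origin. Hence, exactly as in the remark preceding the proof of Lemma~\ref{lemma:bound:oddness}, $\widehat\omega$ extends to a Schwartz function vanishing to infinite order at $0$. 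Thus $\omega\in\mathscr S_0\Lambda^l\subset L^1_0\Lambda^l$. Since $S$ and $S^{-1}$ commute with $|D|$, we get $|D|^{-n}S\omega=\phi$.

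\emph{Step 2: $\mathscr S_0$ is dense in $C_0$.} Since everything is coefficientwise, it suffices to treat scalars. By Hahn--Banach and the Riesz representation theorem, it is enough to show the following: if a finite signed Radon measure $\nu$ satisfies $\int\phi\,d\nu=0$ for every $\phi\in\mathscr S_0$, then $\nu=0$. Take $\phi=\mathscr F^{-1}\psi$ with $\psi\in C_c^\infty(\mathbb R^n\setminus\{0\})$; such $\phi$ clearly lie in $\mathscr S_0$. By Fubini,
\begin{equation*}
	0=\int\phi\,d\nu=(2\pi)^{-n}\int_{\mathbb R^n}\psi(\xi)\,\widehat\nu(-\xi)\,d\xi .
\end{equation*}
Here $\widehat\nu$ is a bounded continuous function. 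Since $\psi$ is arbitrary, $\widehat\nu$ vanishes on $\mathbb R^n\setminus\{0\}$, and by continuity it vanishes everywhere. Injectivity of the Fourier transform on finite measures then gives $\nu=0$.

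Combining the two steps, the closure of the range contains $\overline{\mathscr S_0\Lambda^l}=C_0\Lambda^l$. Proposition~\ref{prop:Beurling_transform} guarantees that the range is contained in $C_0\Lambda^l$ in the first place, so the range is dense.

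The only step requiring some care is Step~2. One must make sure the pairing with Fourier-localized Schwartz functions genuinely detects $\widehat\nu$ pointwise away from the origin. The point is that $\widehat\nu$ is a continuous function rather than a general distribution, which is what excludes a nonzero contribution supported at the origin. Step~1 is routine given the symbol calculus already used for $T_m$ on $\mathscr S_0$.
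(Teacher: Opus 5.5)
Your proof is correct. Step 1 is the same as the paper's: the paper notes that $S$ (with the explicit inverse from Proposition~\ref{prop:beurling_algebra}) and $|D|^{-n}$ are automorphisms of $\mathscr S_0\Lambda^l$, so $|D|^{-n}S[\mathscr S_0\Lambda^l]=\mathscr S_0\Lambda^l$. This is exactly your computation with $|D|^{n}S^{-1}$.

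You differ in Step 2. The paper proves uniform density of $\mathscr S_0$ in $C_0$ constructively. For $g\in\mathscr S$ it removes a small frequency ball, setting $g_\varepsilon=\mathscr F^{-1}[(1-\chi(\xi/\varepsilon))\widehat g]\in\mathscr S_0$, and uses $\|g-g_\varepsilon\|_{L^\infty}\lesssim\|\chi(\cdot/\varepsilon)\widehat g\|_{L^1}\to0$. It then invokes the density of $\mathscr S$ in $C_0$. You argue instead by duality, using $(C_0)'=\mathscr M$ and uniqueness of Fourier--Stieltjes transforms.

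The two arguments are dual forms of one fact. Sup norms are controlled by $L^1$ norms of Fourier transforms, so discarding a shrinking frequency neighborhood of the origin is cheap. Dually, a finite measure has a continuous Fourier transform, which cannot concentrate at the single point $\xi=0$.

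Each route has its advantage. The paper's is explicit and even quantitative: the error is $O(\varepsilon^n)$ for fixed $g$. Yours is non-constructive, but it shows clearly why $C_0$ differs from $L^1$. In $L^1$ the closure of $\mathscr S_0$ is only $L^1_0$, because the annihilator there contains the constants, whose Fourier transform is a multiple of the Dirac mass at the origin.

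One small point: if $C_0$ is taken real-valued, your test functions $\mathscr F^{-1}\psi$ are complex. Their real and imaginary parts still have vanishing moments, so they lie in $\mathscr S_0$, and the annihilation extends to the complex test functions without change.
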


\begin{proof}
	Proposition~\ref{prop:beurling_algebra} shows that $S$ is an automorphism of $\mathscr S_0\Lambda^l$. The multiplier $|D|^{-n}$ is also an automorphism of $\mathscr S_0\Lambda^l$, with inverse $|D|^n$.
	Consequently,
	\begin{equation*}
		|D|^{-n}S[\mathscr S_0\Lambda^l]
		=
		\mathscr S_0\Lambda^l.
	\end{equation*}
	
	It remains to recall that $\mathscr S_0$ is uniformly dense in $C_0$. Indeed, let $g\in\mathscr S$ and choose $\chi\in C_c^\infty(\mathbb R^n)$ such that $\chi=1$ in a neighborhood of the origin. For $\varepsilon>0$, set
	\begin{equation*}
		g_\varepsilon
		=
		\mathscr F^{-1}\left[
		\left(1-\chi\left(\frac{\xi}{\varepsilon}\right)\right)
		\widehat g(\xi)
		\right].
	\end{equation*}
	The Fourier transform of $g_\varepsilon$ vanishes in a neighborhood of the origin, and hence $g_\varepsilon\in\mathscr S_0$. Moreover,
	\begin{equation*}
		\|g-g_\varepsilon\|_{L^\infty}
		\lesssim
		\left\|
		\chi\left(\frac{\cdot}{\varepsilon}\right)\widehat g
		\right\|_{L^1}
		\rightarrow0
	\end{equation*}
	as $\varepsilon\to0$. Since $\mathscr S$ is uniformly dense in $C_0$, it follows that $\mathscr S_0$ is uniformly dense in $C_0$.
	
	Finally, since $\mathscr S_0\Lambda^l\subset L^1_0\Lambda^l$, the range of
	$|D|^{-n}S$ on $L^1_0\Lambda^l$ contains the uniformly dense subspace
	$\mathscr S_0\Lambda^l\subset C_0\Lambda^l$. This proves the result.
\end{proof}


\bibliographystyle{alpha}
\bibliography{hodge}

\end{document}